\newtheorem{thm}{Theorem}[section]
\newtheorem{lem}[thm]{Lemma}
\newtheorem{prop}[thm]{Proposition}
\theoremstyle{definition}
\theoremstyle{remark}
\numberwithin{equation}{section}
\newcommand{\R}{\mathbb{R}}
\newcommand{\ep}{\varepsilon}
\begin{document}

\title[Semilinear nonlocal elliptic equations under noncompact settings]
{Infinitely many solutions for semilinear nonlocal elliptic equations under noncompact settings}

\author{Woocheol Choi}
\address[Woocheol Choi]{Department of Mathematical Sciences, Seoul National University, 1 Gwanakro, Gwanak-gu, Seoul 151-747, Republic of Korea}
\email{chwc1987@math.snu.ac.kr}

\author{Jinmyoung Seok}
\address[Jinmyoung Seok]{Department of Mathematics, Kyonggi University,
154-42 Gwanggyosan-ro, Yeongtong-gu, Suwon 443-760, Republic of Korea}
\email{jmseok@kgu.ac.kr}

\begin{abstract}
In this paper, we study a class of semilinear nonlocal elliptic equations posed on settings without compact Sobolev embedding.
More precisely, we prove the existence of infinitely many solutions to
the fractional Brezis-Nirenberg problems on bounded domain.
\end{abstract}

\subjclass[2010]{35J20, 35J61, 35R11}

\maketitle

\section{Introduction}
The aim of this paper is to prove the existence of infinitely many solutions to some kinds of semilinear elliptic equations involving the fractional Laplace operator $(-\Delta)^s$
which is nonlocal in nature. The fractional Laplace operator arises when we consider the infinitesimal generator of the L\'{e}vy stable diffusion process in probability theory or
the fractional quantum mechanics for particles on stochastic fields. For further motivations and backgrounds, we refer to \cite{FQT} and references therein.
Recently, the semilinear nonlocal elliptic equations, which are denoted by
\begin{equation}\label{frac-semi-eqns}
(-\Delta)^{s}u = f(x,u)\quad \text{in } \Omega \subset \R^N, \quad 0 < s < 1,
\end{equation}
have been widely studied under various contexts.
In this paper, we are interested in equations of the form \eqref{frac-semi-eqns}, which are posed on function spaces without compact Sobolev embedding.
We shall study the fractional Brezis-Nirenberg problems on bounded domains.

We first introduce a fractional Brezis-Nirenberg problem.
Let $\Omega$ be a smooth bounded domain in $\R^N$. For given $s \in (0, 1)$ and $\mu > 0$, the following problem
\begin{equation}\label{eq-main}
\left\{ \begin{array}{ll} (-\Delta)^s u = |u|^{\frac{2N}{N-2s}-2}u +\mu u &\quad \textrm{in}~\Omega,
\\
u=0 &\quad \textrm{on}~\partial \Omega
\end{array}
\right.
\end{equation}
is called the fractional Brezis-Nirenberg problem.
As in \cite{CT}, the fractional Laplacian $(-\Delta)^{s}$ is defined through the spectral decomposition of the usual Laplacian with zero Dirichlet condition.
The precise definition is given in Section 2.
The eqution \eqref{eq-main} is a fractional version of the classical Brezis-Nirenberg problem,
\begin{equation}\label{eq-BN}
\left\{ \begin{array}{ll} -\Delta u = |u|^{\frac{2N}{N-2}-2}u + \mu u &\quad \textrm{in}~\Omega,
\\
u=0 &\quad \textrm{on}~\Omega.
\end{array}
\right.
\end{equation}

Due to the loss of compactness of Sobolev embedding $H_0^1 (\Omega) \hookrightarrow L^{\frac{2N}{N-2}}(\Omega)$
and $H_0^s (\Omega) \hookrightarrow L^{\frac{2N}{N-2s}}(\Omega)$,
more careful analysis is required to construct nontrivial solutions to the equations \eqref{eq-main} and \eqref{eq-BN} than equations with sub-critical nonlinearities.
In a celebrated paper \cite{BN}, Brezis and Nirenberg first studied the existence of a positive solution to \eqref{eq-BN}.
Let $\lambda_1$ and $\phi_1$ respectively denote the first eigenvalue of $-\Delta$ with zero Dirichlet boundary condition on $\Omega$ and a corresponding positive eigenfunction.
By testing $\phi_1$ to \eqref{eq-BN}, it is easy to see that if $\mu \geq \lambda_1$, there is no positive solution to \eqref{eq-BN}.
Also, the well-known Pohozaev's identity says that if $\mu \leq 0$ and $\Omega$ is star-shape, there is no nontrivial solutions to \eqref{eq-BN}.
Thus, one can deduce that the condition $\mu \in (0, \lambda_1)$ is necessary for \eqref{eq-BN} to admit a positive solution for general smooth domains $\Omega$.
Brezis and Nirenberg proved in \cite{BN} that if $N \geq 4$, the above condition is sufficient. In other words, there is a positive least energy solution to \eqref{eq-BN}
for all $\mu \in (0, \lambda_1)$.

Since the work of Brezis and Nirenberg, many research papers  have been devoted to study the problem \eqref{eq-BN}.
One of most important works is made by Devillanova and Solimini who proved in \cite{DS} the existence of infinitely many solutions for the problem \eqref{eq-BN} when $N \geq 7$ and $\mu > 0$.
This work was extended to an analogous problem involving $p$-Laplacian for $1<p<\infty$ by Cao-Peng-Yan \cite{CPY}.
They proved that if $N > p^2 + p$, the following problem
\[
-\Delta_p u = |u|^{p^*-2}u +\mu|u|^{p-2}u \quad \text{in } \Omega, \qquad u = 0 \quad \text{on } \partial\Omega,
\]
where $\mu > 0$ and $p^* = \frac{pN}{N-p}$, has infinitely many nontrivial solutions.

The equation \eqref{eq-main} was first studied by Tan \cite{T} for $s=\frac{1}{2}$, where he obtained existence of a positive solution. It was extended to the problem with nonlinearity $f(u) = u^{\frac{N+2s}{N-2s}} + \mu u^{q}$ for $s < \min\{N/2, 1\}$, $\mu \in \mathbb{R}$ and $q \in (0, \frac{N+2s}{N-2s})$ in the work of Barrios-Coloado-Pablo-S\'anchez \cite{BCPS2}.
Choi-Kim-Lee  \cite{CKL} investigated the asymptotic behavior of solutions to \eqref{eq-main} as $\mu$ goes to zero. The equation \eqref{eq-main} is also related to the geometric problem called \emph{the Fractional Yamabe problem}. Concerning this problem we refer to Chang-Gonz\'alez \cite{CG}, Gonz\'alez-Qing \cite{GQ}, and Gonz\'alez-Wang \cite{GW}. 
On the other hand, it is worth to mention that 
a fractional Laplacian with exterior zero condition can be also defined 
as an integral operator. In this setting, Servadei and Valdinoci obtained the existence of solutions for the sub-critical problem \cite{SV2} and Brezis-Nireberg problem \cite{SV1, SV3}. In addition, Servadi \cite{Se} showed that the sub-critical problem possesses infinitely many solutions. 
However, to the best of our knowledge, there has been no result in literature, which deals with the existence of infinitely many solutions to the equation \eqref{eq-main} having the critical exponent. 
Compared to the sub-critical problems, the main difficuly lies in the fact that one can not use the standard variational technique to obtain a nontrivial solution because the Palais-Smale condition fails to hold due to the loss of compact Sobolev embedding. 
We need to overcome this difficulty to obtain our main result, which extends the Devillanova and Solimini's result in \cite{DS} to the fractional case.
\begin{thm}\label{thm-main-1}
Let $s \in (0, 1)$ and $\mu > 0$ be given. Suppose $N > 6s$. Then the equation \eqref{eq-main} admits infinitely many nontrivial solutions.
\end{thm}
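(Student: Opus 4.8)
The plan is to transplant the blow-up scheme of Devillanova and Solimini \cite{DS} to the nonlocal setting; the three ingredients are a subcritical approximation, the symmetric mountain pass theorem, and concentration estimates that rule out bubbling precisely when $N>6s$. Writing $2^*_s=\frac{2N}{N-2s}$, I would first pass to the $s$-harmonic extension on the half-cylinder $\mathcal{C}=\Omega\times(0,\infty)$ (as in \cite{CT}), recasting \eqref{eq-main} as the local degenerate problem $\mathrm{div}(y^{1-2s}\nabla w)=0$ in $\mathcal{C}$, $w=0$ on $\partial\Omega\times(0,\infty)$, with the nonlinear boundary condition $-\kappa_s\lim_{y\to0^+}y^{1-2s}\partial_y w=|w|^{2^*_s-2}w+\mu w$ on $\Omega\times\{0\}$; its weak solutions are the critical points of $I(w)=\frac12\int_{\mathcal{C}}y^{1-2s}|\nabla w|^2\,dx\,dy-\frac1{2^*_s}\int_\Omega|w(\cdot,0)|^{2^*_s}\,dx-\frac\mu2\int_\Omega|w(\cdot,0)|^2\,dx$ on the weighted Sobolev space of functions vanishing on $\partial\Omega\times(0,\infty)$. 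For $p\in(2,2^*_s)$ I replace the critical power by $p$; since the embedding $H^s_0(\Omega)\hookrightarrow L^p(\Omega)$ is compact, the even functional $I_p$ satisfies the Palais--Smale condition, and --- irrespective of the size of $\mu$ --- it has the symmetric mountain pass geometry uniformly for $p$ near $2^*_s$. Rabinowitz's symmetric minimax theorem then yields, for every $k\in\mathbb N$ and every such $p$, a critical point $w_{p,k}$ at a minimax level $c_{p,k}$ with $\beta_k\le c_{p,k}\le C_k$, the constants independent of $p$ near $2^*_s$ and $\beta_k\to\infty$; the upper bound comes from testing the minimax class against the span of the first eigenfunctions of $(-\Delta)^s$ (on which $L^p$-norms vary continuously in $p$), the lower bound from the usual genus estimate.

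Next I would fix $k$ and send $p=p_j\to2^*_s$. At a critical point, $c_{p_j,k}=I_{p_j}(w_{p_j,k})=\big(\tfrac12-\tfrac1{p_j}\big)\,|w_{p_j,k}(\cdot,0)|_{L^{p_j}}^{p_j}$, which with $c_{p_j,k}\le C_k$ bounds $|w_{p_j,k}(\cdot,0)|_{L^{p_j}}$; then $\|w_{p_j,k}\|^2=|w_{p_j,k}(\cdot,0)|_{L^{p_j}}^{p_j}+\mu|w_{p_j,k}(\cdot,0)|_{L^2}^2$ together with the trace and Sobolev inequalities bounds $\|w_{p_j,k}\|$, all uniformly in $j$. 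Hence $w_{p_j,k}\rightharpoonup w_k$ along a subsequence, with $w_k$ a possibly trivial weak solution of \eqref{eq-main}, and the defect of strong convergence is controlled by a fractional global compactness decomposition in the spirit of Struwe: $w_{p_j,k}=w_k+\sum_{i=1}^{m_k}B_j^i+o(1)$ in the energy norm, where each $B_j^i$ is, after a translation and a dilation, a standard Aubin--Talenti instanton of the profile equation $(-\Delta)^sU=|U|^{2^*_s-2}U$ on $\R^N$, carrying energy exactly $\frac{s}{N}S^{N/(2s)}$ with $S$ the best fractional Sobolev constant; in particular $c_k:=\lim_j c_{p_j,k}=I(w_k)+m_k\,\frac{s}{N}S^{N/(2s)}$.

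The crux, and the step I expect to be the main obstacle, is to prove $m_k=0$ for every $k$ --- the nonlocal analogue of the Devillanova--Solimini concentration estimates. Here I would first obtain a pointwise control $|w_{p_j,k}(x,0)|\le C\sum_i(\lambda_j^i)^{-(N-2s)/2}\big(1+|x-x_j^i|/\lambda_j^i\big)^{-(N-2s)}$ near the concentration set (via a Moser iteration and Green-function bootstrap for the degenerate equation), reducing the bubbling profile to a finite number of points $x_j^i$ and scales $\lambda_j^i\to0$. Plugging this into a Pohozaev identity for the extended problem --- available in the spectral setting --- and comparing the leading term produced by the linear perturbation $\mu u$, whose natural size is the $L^2$-mass of an instanton ($\sim(\lambda_j^i)^{2s}$, finite because $N>4s$), with the remainder coming from the subcriticality defect $|U|^{2^*_s-2}U-|U|^{p_j-2}U$ and from the bubble--bubble and bubble--$w_k$ interactions ($\sim(\lambda_j^i)^{N-2s}$ up to logarithms), one is led to a contradiction with the bounded-energy bound $c_{p_j,k}\le C_k$ exactly when $N>6s$, in the same way as in the local case $s=1$, $N\ge7$. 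Thus $m_k=0$, so $w_{p_j,k}\to w_k$ strongly and $w_k$ is a \emph{nontrivial} solution of \eqref{eq-main} with $I(w_k)=c_k\in[\beta_k,C_k]$.

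Since $c_k\to\infty$ as $k\to\infty$, the solutions $w_k$ realize infinitely many distinct energy levels, hence are infinitely many distinct solutions; restricting them to $\Omega\times\{0\}$ gives infinitely many nontrivial solutions of \eqref{eq-main}, which proves Theorem~\ref{thm-main-1}. Beyond the variational machinery, the plan leans on the extension characterization of the spectral $(-\Delta)^s$ with zero Dirichlet datum, on regularity and boundary estimates for degenerate elliptic equations with the Muckenhoupt weight $y^{1-2s}$, on the classification and nondegeneracy of the fractional instantons, and on a Pohozaev identity for the extended problem; the genuinely new ingredient is the nonlocal form of the concentration estimates.
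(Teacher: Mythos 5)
Your overall route is the same as the paper's: approximate by subcritical problems, produce critical points at genus/symmetric-minimax levels that are bounded uniformly in the subcritical exponent, pass to the $s$-harmonic extension, apply a Struwe-type decomposition (Lemma \ref{lem-cc-bounded}), and exclude bubbles via a Pohozaev-type argument when $N>6s$ (this exclusion is exactly Theorem \ref{thm-uniform-bound}). Two smaller corrections before the main point: since the approximating solutions change sign, the limit profiles are only nontrivial entire solutions of the critical problem, not Aubin--Talenti instantons, so your ``energy exactly $\frac{s}{N}S^{N/2s}$'' should be a lower bound, and what is actually usable is the decay $|V(x,0)|\le C(1+|x|)^{-(N-2s)}$ of Lemma \ref{lem-entire-decay}; also, distinctness of the limiting levels does not follow merely from $c_{k,\ep}\to\infty$ at fixed $\ep$ --- the paper invokes the standard alternative of \cite{CSS} (either infinitely many distinct levels or infinitely many critical points at a single level), whereas your uniform-in-$p$ lower bound $\beta_k\to\infty$ is asserted, not proved.

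The genuine gap is the crux step, which you leave as a sketch, and the sketch as written would not produce the threshold $N>6s$. First, the pointwise bubble-tree bound $|w_{p_j,k}(x,0)|\le C\sum_i(\lambda^i_j)^{-(N-2s)/2}(1+|x-x^i_j|/\lambda^i_j)^{-(N-2s)}$ is far stronger than anything established here (and delicate for sign-changing, multi-bubble configurations); the paper never proves such a bound. Instead it proves integral estimates on annuli of width comparable to $\lambda_n^{-1/2}$ around the slowest bubble: uniform $L^q$ averages for every $q>1$ (Proposition \ref{int-estimate}), built from the refined norm $\|\cdot\|_{\lambda,q_1,q_2}$ of Proposition \ref{prop-q1-q2}, a Kilpel\"ainen--Mal\'y-type potential estimate (Lemma \ref{lem-average}) and Moser iteration (Lemma \ref{lem-harnack}), together with the weighted gradient bound $\int_{\mathcal{A}^2_n(N+1)}t^{1-2s}|\nabla U_n|^2\,dxdt\le C\lambda_n^{(2s-N)/2}$ (Proposition \ref{int-grad-estimate}). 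Second, your accounting of scales is off: comparing the $\mu$-term, of size $\lambda^{2s}$ in your convention, with subcriticality/interaction errors of size $\lambda^{N-2s}$ would only force $N>4s$. The exponent $6s$ arises because the local Pohozaev identity \eqref{eq-local} is applied on balls of the intermediate radius $\sim\lambda^{1/2}$ (with the center $x_0$ chosen outside $\Omega$ when the ball meets $\partial\Omega$, so the extra boundary terms have a sign), where the surface terms are controlled by the annulus estimates above and are of order $\lambda^{(N-2s)/2}$; the resulting inequality $\lambda^{2s}\le C\lambda^{(N-2s)/2}$ is contradictory as $\lambda\to0$ precisely when $2s<(N-2s)/2$, i.e., $N>6s$. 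Without this intermediate-scale Pohozaev argument and the annulus estimates that make its boundary terms small, your plan does not close, and this is exactly the content of Sections 4--6 of the paper.
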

\noindent We shall prove Theorem \ref{thm-main-1} by following Devillanova and Solimini's ideas in \cite{DS}.
The main strategy in these ideas is to consider approximating subcritical problems for which one can show that there are infinitely many nontrivial solutions.
In other words, we consider subcritical problems
\begin{equation}\label{eq-subcritical}
\left\{\begin{array}{ll} (-\Delta)^s u = |u|^{\frac{2N}{N-2s} -2 -\ep} u + \mu u &\quad \textrm{in}~\Omega,
\\
u = 0&\quad \textrm{on}~\partial \Omega,
\end{array}\right.
\end{equation}
for small $\ep > 0$.
From the sub-criticality of the problems, one can verify by using standard variational methods that for every small $\ep > 0$,
\eqref{eq-subcritical} admits infinitely many nontrivial solutions in a fractional Sobolev space $H^s_0(\Omega)$. (We will define $H^s_0(\Omega)$ precisely in Section 2.)
In this regard we shall prove the following compactness result to obtain nontrivial solutions to our original equation \eqref{eq-main}.
\begin{thm}\label{thm-uniform-bound}
Assume $N > 6s$. Let $\{u_n\}$ be a sequence of solutions to \eqref{eq-subcritical} with $\ep =\ep_n \rightarrow 0$ as $n \to \infty$
and $\sup_{n \in \mathbb{N}} \| u_n \|_{H^s_0(\Omega)} < \infty$. Then $\{u_n \}$ converges strongly in $H^s_0(\Omega)$ up to a subsequence.
\end{thm}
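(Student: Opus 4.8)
The plan is to run a concentration--compactness argument showing that, under $N>6s$, compactness cannot be lost. Write $p_n:=\frac{2N}{N-2s}-\ep_n$. Since $\{u_n\}$ is bounded in $H^s_0(\Omega)$, after passing to a subsequence we may assume $u_n\rightharpoonup u$ weakly in $H^s_0(\Omega)$, $u_n\to u$ in $L^q(\Omega)$ for every $q<\frac{2N}{N-2s}$, and $u_n\to u$ almost everywhere. By the Sobolev inequality, $\{|u_n|^{p_n-2}u_n\}$ is bounded in $L^{(2N/(N-2s))'}(\Omega)$, hence converges weakly there, and the pointwise convergence identifies its weak limit as $|u|^{\frac{2N}{N-2s}-2}u$; passing to the limit in the weak formulation of \eqref{eq-subcritical} then shows that $u$ solves \eqref{eq-main}. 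Because $H^s_0(\Omega)$ is a Hilbert space, the strong convergence $u_n\to u$ is equivalent to the convergence of norms $\|u_n\|_{H^s_0(\Omega)}\to\|u\|_{H^s_0(\Omega)}$, so it suffices to prevent energy from escaping.

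Put $v_n:=u_n-u\rightharpoonup0$. By the Brezis--Lieb lemma together with a Struwe-type global compactness decomposition for the spectral fractional Laplacian, after a further subsequence there exist an integer $k\ge0$, points $x^j_n\to x^j\in\overline{\Omega}$, scales $\lambda^j_n\to0$, and nontrivial finite-energy solutions $W^j$ of $(-\Delta)^sW=|W|^{\frac{2N}{N-2s}-2}W$ on $\R^N$ with
\[
v_n=\sum_{j=1}^{k}(\lambda^j_n)^{-\frac{N-2s}{2}}\,W^j\!\Big(\frac{\,\cdot\,-x^j_n}{\lambda^j_n}\Big)+r_n,\qquad\|r_n\|_{H^s_0(\Omega)}\to0,
\]
and with the energy splitting $\|u_n\|_{H^s_0(\Omega)}^2=\|u\|_{H^s_0(\Omega)}^2+\sum_{j=1}^{k}\|W^j\|_{\dot H^s(\R^N)}^2+o(1)$. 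Hence the theorem is equivalent to showing that $k=0$.

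Assume, towards a contradiction, that $k\ge1$. One first excludes concentration on $\partial\Omega$: using the boundary regularity of solutions of the spectral fractional equation on a smooth domain together with a Pohozaev-type identity obtained from the Caffarelli--Silvestre/Stinga--Torrea harmonic extension, the conormal boundary term carries a sign incompatible with a concentrating bubble, so every $x^j$ lies in the interior $\Omega$. Zooming in around the most concentrated bubble, $\tilde u_n(x):=(\lambda_n)^{\frac{N-2s}{2}}u_n(x_n+\lambda_n x)$, one finds, on domains that exhaust $\R^N$,
\[
(-\Delta)^s\tilde u_n=\lambda_n^{\frac{(N-2s)\ep_n}{2}}\,|\tilde u_n|^{p_n-2}\tilde u_n+\mu\,\lambda_n^{2s}\,\tilde u_n,\qquad\tilde u_n\to W\ \text{in }C^{0}_{\mathrm{loc}}(\R^N),
\]
where $W$ is one of the bubbles. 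The heart of the matter --- and the only place where $N>6s$ is used --- is an \emph{a priori estimate}: there is an exponent $q>\frac{2N}{N-2s}$ with $\sup_n\|u_n\|_{L^q(\Omega)}<\infty$. Following Devillanova--Solimini \cite{DS}, this is obtained by combining (i) interior and boundary $L^\infty$/Moser estimates for the fractional equation on the region where the $L^{2N/(N-2s)}$-mass of $u_n$ is small, which bounds $u_n$ uniformly away from the finite concentration set, with (ii) a fine analysis near each concentration point (its blow-up profile being a single bubble) followed by a bootstrap on the rescaled equation above, in which the subcritical gain $\lambda_n^{(N-2s)\ep_n/2}$, the smallness of $\mu\lambda_n^{2s}$, and the integrability gain $\tfrac{2s}{N}$ in the fractional elliptic estimate combine to raise the integrability exponent of $u_n$ strictly above $\frac{2N}{N-2s}$; tracking the exponents, this bootstrap closes precisely when $N>6s$, in analogy with $N\ge7$ for $s=1$. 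Granting the estimate, the proof concludes quickly: a nontrivial bubble $W^j$ decays like $|x|^{-(N-2s)}$ at infinity, so
\[
\Big\|(\lambda^j_n)^{-\frac{N-2s}{2}}W^j\big((\cdot-x^j_n)/\lambda^j_n\big)\Big\|_{L^q(\R^N)}\sim(\lambda^j_n)^{\frac{N}{q}-\frac{N-2s}{2}}\longrightarrow\infty
\]
as $n\to\infty$ whenever $q>\frac{2N}{N-2s}$, contradicting $\sup_n\|u_n\|_{L^q(\Omega)}<\infty$. Therefore $k=0$, the norms converge, and $u_n\to u$ strongly in $H^s_0(\Omega)$.

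I expect the a priori $L^q$-bound with $q$ strictly above the critical Sobolev exponent to be the main obstacle: it is where the interplay between the subcritical defect $\ep_n$, the concentration scales $\lambda^j_n$, and the dimension has to be controlled, and where the assumption $N>6s$ is genuinely needed; the remaining ingredients --- excluding boundary concentration, and the precise form of the global compactness decomposition and of the Pohozaev identity for the spectral fractional Laplacian --- are more technical but essentially standard adaptations of known arguments.
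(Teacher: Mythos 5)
Your overall skeleton (pass to a subsequence, Struwe-type decomposition, argue $k=0$ by contradiction) matches the paper, but the step you yourself identify as the heart of the matter --- a uniform bound $\sup_n\|u_n\|_{L^q(\Omega)}<\infty$ for some $q>\frac{2N}{N-2s}$ --- is a genuine gap, and the mechanism you propose for it cannot work. Given the decomposition, such a global supercritical bound is \emph{equivalent} to the absence of bubbles (as your own scaling computation shows), so it cannot be an intermediate step unless it is produced by some input independent of elliptic bootstrapping near the blow-up point. The bootstrap on the rescaled equation provides no such input: $\tilde u_n\to W$ in $C^0_{\mathrm{loc}}$, so uniform $L^q_{\mathrm{loc}}$ bounds for $\tilde u_n$ certainly hold for every $q$, but translating back they only give $\|u_n\|_{L^q(B(x_n,R\lambda_n))}\sim\lambda_n^{\frac Nq-\frac{N-2s}2}\to\infty$ for $q>2^*(s)$; the factors $\lambda_n^{(N-2s)\ep_n/2}$ and $\mu\lambda_n^{2s}$ are harmless perturbations fully consistent with the bubble being there (and there is no quantitative relation between $\ep_n$ and the concentration scales that could be exploited). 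In other words, nothing in your step (ii) distinguishes the situation "bubble present'' from "bubble absent,'' so no exponent bookkeeping can make it "close precisely when $N>6s$.''

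The paper's proof supplies exactly the missing independent mechanism, and the dimension restriction enters there, not in any bootstrap. Instead of a global supercritical bound, Section 4 proves the weaker, bubble-compatible estimate $\sup_n\|U_n(\cdot,0)\|_{\lambda_n,q_1,q_2}<\infty$: a decomposition $|U_n|\le u_1+u_2$ with $\|u_1\|_{q_1}\le C$ for some $q_1>2^*(s)$ and $\|u_2\|_{q_2}\le C\lambda_n^{\frac N{2^*(s)}-\frac N{q_2}}$ for some $q_2<2^*(s)$, which the bubbles themselves satisfy. This is then upgraded (Propositions \ref{int-estimate} and \ref{int-grad-estimate}) to uniform $L^q$ bounds, for every $q>1$, only on annuli of width $\sim\lambda_n^{-1/2}$ around the \emph{slowest} bubble $x_n$, together with $\int_{\mathcal{A}^2_n(N+1)}t^{1-2s}|\nabla U_n|^2\le C\lambda_n^{\frac{2s-N}2}$. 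Finally a local Pohozaev identity for the extension on balls of radius $\sim\lambda_n^{-1/2}$ (with $x_0$ chosen so that the boundary terms on $\partial\Omega$ have a sign --- no separate exclusion of boundary concentration is needed) gives $\int_{B^N(x_n,c\lambda_n^{-1/2})}|U_n(x,0)|^2dx\le C\lambda_n^{\frac{2s-N}2}$, while the slowest bubble forces the lower bound $c\lambda_n^{-2s}$ on the much smaller ball $B^N(x_n,\lambda_n^{-1})$; compatibility of the two forces $N\le 6s$, which is the contradiction. So the role you assign to $N>6s$ (closing a bootstrap near the concentration point) is misplaced, and without the annulus estimates and the local Pohozaev identity your argument does not reach a contradiction.
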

\noindent Combining Theorem \ref{thm-uniform-bound} with a well-known topological genus theory, we will see in Section 6 that there are infinitely many nontrivial solutions to \eqref{eq-main}. The proof of Theorem \ref{thm-uniform-bound} will be the main task of this paper, which requires a series of new delicate analysis.
\

It turns out from several technical reasons that studying our nonlocal equations \eqref{eq-main} and \eqref{eq-subcritical} directly is not suitable for establishing Theorem \ref{thm-uniform-bound}, and it is advantageous to consider so-called $s$-harmonic extension problems \eqref{u0inc} and \eqref{eq-main-q}, which are equivalent to \eqref{eq-main} and \eqref{eq-subcritical} respectively.
As we will see in Section 2, the equations \eqref{u0inc} and \eqref{eq-main-q} are local so that they are much easier to deal with than nonlocal ones,
but the domain of problems are changed from $\Omega$ to the half-infinite cylinder $\mathcal{C}:= \Omega \times[0,\infty)$.
This kind of localization was initiated by Caffarelli-Sylvestre \cite{CS} in which the domain under consideration is the whole space $\R^N$, and has been made for bounded domains by many authors \cite{BCPS1, CT, T2}.

By virtue of considering localized equations, one can easily obtain the concentration compactness principle of Struwe \cite{S} for a sequence of solutions to a local equation \eqref{eq-main-q}.
This principle says that a bounded sequence of solutions to \eqref{eq-main-q} in a Sobolev space consists of a function that the sequence weakly converges,
finitely many bubbles that may possibly exist and a function that strongly converges to zero (see Lemma \ref{lem-cc-bounded}). Under this decomposition, to get the compactness, we need to get rid of possibility that bubbles appear.
This will be achieved by arguing indirectly, i.e., we assume there exist bubbles in the sequence and get a contradiction.
For this, an important issue is to verify a sharp bound of the solutions on some thin annuli near a bubbling point. We devote a large part of this paper to obtain it.
We give a full detail of ideas for the proof for Theorem \ref{thm-uniform-bound} in Section 3.
After the proof of Theorem \ref{thm-uniform-bound}, we shall complete the proof of Theorem \ref{thm-main-1} by using a min-max principle combined with the topological genus.

The rest of the paper is organized as follows. In Section 2, we review the fractional Laplacian, $s$-harmonic extension and the extended local problems posed on half-infinite cylinders.
We also arrange some basic lemmas which will be used throughout the paper. In Section 3, we give basic settings and ideas for the proof for Theorem \ref{thm-uniform-bound}.
By following these ideas, we complete the proof of Theorem \ref{thm-main-1} and Theorem \ref{thm-uniform-bound} in subsequent sections 4, 5 and 6.
In Appendix A we prove a technical lemma which will be essentially used in Section 5.
In Appendix B, we prove a lemma which corresponds a non-local version of Moser's iteration method.
Finally in Appendix C, we establish so-called local Pohozaev identity for solutions to \eqref{eq-main-q}, that is a main ingredient for obtaining compactness of a sequence of solutions to \eqref{eq-main-q}.

\bigskip
\noindent \textbf{Notations.}

\medskip
\noindent Here we list some notations which will be used throughout the paper.

\noindent - We shall denote by $2^{*}(s)$ the critical exponent $\frac{2N}{N-2s}$.

\noindent - The letter $z$ represents a variable in the $\mathbb{R}^{n+1}$. Also, it is written as $z = (x,t)$ with $x \in \mathbb{R}^n$ and $t \in \mathbb{R}$.

\noindent - For a domain $D \subset \mathbb{R}^d$ with $d=N$ or $d=N+1$, the map $\nu = (\nu_1, \cdots, \nu_n): \partial D \to \mathbb{R}^d$ denotes the outward pointing unit normal vector on $\partial D$.

\noindent - $dS$ stands for the surface measure. Also, a subscript attached to $dS$ (such as $dS_x$ or $dS_z$) denotes the variable of the surface.

\noindent - $C > 0$ is a generic constant that may vary from line to line.

\

\section{Mathematical frameworks and preliminaries}
\subsection{Fractional Sobolev spaces, fractional Laplacians and $s$-harmonic extensions}\label{subsec_frac_Sob}
Let $\Omega$ be a smooth bounded domain in $\mathbb{R}^N$
and $\{0<  \lambda_k, \phi_k\}_{k=1}^{\infty}$ be the complete system of eigenvalues and eigenfunctions of eigenvalue problems:
\[\left\{ \begin{array}{ll}
- \Delta \phi_k = \lambda_k \phi_k &\text{in}~ \Omega,\\
\phi_k = 0 &\text{on}~ \partial \Omega,\\
\end{array}\right.\]
such that $\|\phi_k\|_{L^2(\Omega)} = 1$ and $\lambda_1 < \lambda_2 \le \lambda_3 \le \cdots$.
By following the paper \cite{CT},
we define a fractional Sobolev space $H_0^s (\Omega)$ for $s \in (0, 1)$ by
\begin{equation}\label{H_0^s}
H_0^s (\Omega) = \left\{ u = \sum_{k=1}^{\infty} a_k \phi_k \in L^2 (\Omega) : \sum_{k=1}^{\infty} \lambda_k^{s}a_k^2 < \infty \right\},
\end{equation}
which is a Hilbert space equipped with an inner product:
\[
\langle u, v \rangle_{H_0^s(\Omega)} = \sum_{k=1}^{\infty} \lambda_k^sa_k b_k
\qquad \text{for} \quad
u = \sum_{k=1}^{\infty} a_k \phi_k,\ v = \sum_{k=1}^{\infty} b_k \phi_k \in H_0^s(\Omega).
\]
The fractional Laplace operator with zero Dirichlet condition,
$(-\Delta)^{s} : H_0^s(\Omega) \to H_0^{-s}(\Omega)$
 is defined by
\[
\langle (-\Delta)^{s}u,v \rangle_{H_0^{-s}(\Omega)} = \langle u, v \rangle_{H_0^s(\Omega)}
\quad \text{ for all } v \in H^s_0(\Omega),
\]
where $H_0^{-s}(\Omega)$ denotes the dual space of $H_0^s(\Omega)$.
Observe that for any function $ u = \sum_{k=1}^{\infty} a_k \phi_k \in H_0^{2s}(\Omega)$, 
$(-\Delta)^{s}u$ has a unique realization in $L^2(\Omega)$ such that
\[(-\Delta)^{s}u = \sum_{k=1}^{\infty} a_k \lambda_k^s \phi_k.\]
Thus we see the inner product is written by 
\[\left\langle u, v \right\rangle_{H_0^s(\Omega)} = \int_{\Omega} (-\Delta)^{s/2}u \cdot (-\Delta)^{s/2}v \,dx \quad \text{for } u, v \in H_0^s(\Omega)\]
and if $u \in H^{2s}_0(\Omega)$, an integration by parts formula holds as follows:
\[
\int_{\Omega} (-\Delta)^{s/2}u \cdot (-\Delta)^{s/2}v\,dx = \int_{\Omega} (-\Delta)^s u\cdot v\,dx.
\]

Next, we consider the whole space $\mathbb{R}^N$.
For $s \in (0, 1)$, we define a function space $D^s(\mathbb{R}^N)$ by
\[D^s(\mathbb{R}^N) = \left\{u \in L^{2^*(s)}(\mathbb{R}^N): \|u\|_{D^s(\mathbb{R}^N)} := \left(\int_{\mathbb{R}^N} |\xi|^{2s}|\hat{u}(\xi)|^2 d\xi\right)^{1 \over 2} < \infty \right\} \]
where $\hat{u}$ denotes the Fourier transform of $u$.
We call $D^s(\mathbb{R}^N)$ the homogeneous fractional Sobolev space.
Note that $D^s(\R^N)$ is a Hilbert space equipped with an inner product
\[
\left\langle u, v \right\rangle_{D^s(\R^N)} = \int_{\R^N}|\xi|^{2s}\hat u(\xi)\hat v(\xi)\,d\xi.
\]
We also define the fractional Laplace operator on $\R^N$, 
$(-\Delta)^{s}: D^{s}(\mathbb{R}^N) \to D^{-s}(\mathbb{R}^N)$ by
\[
\langle (-\Delta)^{s}u, v\rangle_{D^{-s}(\R^N)} = \left\langle u, v \right\rangle_{D^s(\R^N)}
\text{ for all } v \in D^s(\R^N),,
\]
where $D^{-s}(\R^N)$ is the dual of $D^s(\R^N)$.
Then, one can easily check that if $u \in D^{2s}(\R^N)$, we have $(-\Delta)^s u \in L^2(\R^N)$ such that
\[ (-\Delta)^{s}u = \frak{F}^{-1}[|\xi|^{2s} \hat{u}(\xi)]\]
where $\frak{F}^{-1}$ denotes the inverse Fourier transform.
We see for $u, v \in D^s(\R^N)$
\[
\left\langle u, v \right\rangle_{D^s(\R^N)} = \int_{\R^N} (-\Delta)^{s/2}u \cdot (-\Delta)^{s/2}v
\]
and if $u \in D^{2s}(\R^N),\, v \in D^{s}(\R^N)$, we can integrate by parts:
\[
\int_{\R^N} (-\Delta)^{s/2}u \cdot (-\Delta)^{s/2}v = \int_{\R^N} (-\Delta)^s u\cdot v.
\]
Finally, the notation $H^s(\R^N)$ denotes the standard fractional Sobolev space defined as
\[
H^s(\R^N) = D^s(\R^N) \cap L^2(\R^N).
\]

Now we introduce the concept of $s$-harmonic extension of a function $u$ on $\Omega$. Here $\Omega$ is either a whole space $\mathbb{R}^N$ or a smooth bounded domain.
This provides a way to represent fractional Laplace operators as a form of Dirichlet-to-Neumann map.
To do this, we need to define additional function spaces on the half infinite cylinder $\mathcal{C} = \Omega \times (0, \infty)$.
Let $L^2(t^{1-2s}, \mathcal{C})$ denote a weighted Lebesgue space defined by the set of all measurable functions $U : \mathcal{C} \to \R$ satisfying
\[
\| U \|_{L^2(t^{1-2s}, \mathcal{C})} :=
\left(\int_{\mathcal C}t^{1-2s}U^2\,dxdt\right)^{\frac12} < \infty.
\]
A weighted Sobolev space $H^1(t^{1-2s}, \mathcal{C})$ is defined by
\[
H^1(t^{1-2s}, \mathcal{C}) = \{U \in L^2(t^{1-2s}, \mathcal{C}) : \nabla U \in L^2(t^{1-2s}, \mathcal{C})\}.
\]
Then it is a Hilbert space equipped with an inner product
\[
\left\langle U, V \right\rangle_{H^1(t^{1-2s}, \mathcal{C})} = \int_{\mathcal C} t^{1-2s}(\nabla U \cdot \nabla V + UV)\,dxdt.
\]

Suppose that $\Omega$ is smooth and bounded. We set the lateral boundary $\partial_L\mathcal{C}$ of $\mathcal{C}$ by
\[
\partial_L\mathcal{C} := \partial \Omega \times [0,\infty).
\]
Then the function space $H^1_{0}(t^{1-2s}, \mathcal{C})$ defined by the completion of
\[ C_{0,L}^{\infty}(\mathcal{C})
:= \left\{ U \in C^{\infty}\left(\overline{\mathcal{C}}\right) : U = 0 \text{ on } \partial_L\mathcal{C} \right\}\]
with respect to the norm
\begin{equation}\label{weighted_norm}
\|U\|_{H^1_{0}(t^{1-2s}, \mathcal{C})} = \left( \int_{\mathcal{C}} t^{1-2s} |\nabla U|^2\,dxdt \right)^{1/2},
\end{equation}
is also a Hilbert space endowed with an inner product
\[
(U,V)_{H^1_{0}(t^{1-2s}, \mathcal{C})} = \int_{\mathcal{C}} t^{1-2s} \nabla U \cdot \nabla V \,dxdt.
\]
It is verified in \cite[Proposition 2.1]{CS} and \cite[Section 2]{T2} that $H^s_0(\Omega)$ is the continuous trace of $H^1_{0}(t^{1-2s}, \mathcal{C})$, i.e.,
\begin{equation}\label{eq_Sobo_trace}
H_0^s(\Omega) = \{u = \text{tr}|_{\Omega \times \{0\}}U: U \in H^1_{0}(t^{1-2s}, \mathcal{C})\},
\end{equation}
and
\begin{equation}\label{ineq-trace1}
\|U(\cdot, 0)\|_{H^s_0(\Omega)} \leq C\|U\|_{H^1_0(t^{1-2s},\mathcal{C})}
\end{equation}
for some $C > 0$, independent of $U \in H^1_0(t^{1-2s}, \mathcal{C})$.

When $\Omega = \R^{N}$(in this case $\mathcal{C} = \R^{N+1}_+$), one can define a weighted homogeneous Sobolev space $D^1(t^{1-2s}, \R^{N+1}_+)$ as the completion of
$C_c^{\infty}\left(\overline{\mathbb{R}^{N+1}_+}\right)$ with respect to the norm
\[
\|U\|_{D^1(t^{1-2s}, \R^{N+1}_+)} := \left(\int_{\R^{N+1}_+}t^{1-2s}|\nabla U|^2\,dxdt\right)^{1/2}.
\]
Similarly, it holds by taking trace that
\[
D^s(\R^N) = \{u = \text{tr}|_{\R^N \times \{0\}}U : U \in D^1(t^{1-2s}, \R^{N+1}_+)\}
\]
and
\begin{equation}\label{ineq-trace2}
\|U(\cdot, 0)\|_{D^s(\mathbb{R}^N)} \le C \|U\|_{D^1(t^{1-2s}, \R^{N+1}_+)}
\end{equation}
for some $C > 0$ independent of $U \in D^1(t^{1-2s}, \R^{N+1}_+)$.

Now, we are ready to introduce $s$-harmonic extensions of $u \in H^s_0(\Omega)$ for bounded $\Omega$ or $u \in D^s(\R^N)$,
that can be thought as the inverses of the trace processes above.
Let $u \in H^s_0(\Omega)$ and  $v \in D^s(\R^N)$.
By works of Caffarelli-Silvestre \cite{CS} (for $\mathbb{R}^N$), Cabr\'e-Tan \cite{CT} (for bounded domains $\Omega$, see also \cite{ST, BCPS1, T2}),
it is known that there are unique functions $U \in H^1_0(t^{1-2s},\mathcal{C})$ and $V \in D^1(t^{1-2s},\R^N)$ which satisfies the equation
\begin{equation}\label{s-extension}
\left\{ \begin{array}{ll} \text{div}(t^{1-2s} \nabla U) = 0 &~\text{in}~ \mathcal{C},\\
U = 0 & ~\text{on}~ \partial_L \mathcal{C},\\
U(x,0)= u(x) &~ \text{for}~ x \in \Omega,
\end{array}\right.
\end{equation}
and
\begin{equation}\label{s-extension-2}
\left\{ \begin{array}{ll} \text{div}(t^{1-2s} \nabla V) = 0 &~\text{in}~ \R^{N+1}_+,\\
V(x,0)= v(x) &~ \text{for}~ x \in \R^N
\end{array}\right.
\end{equation}
respectively in distributional sense.
Moreover, if $u$ and $v$ are compactly supported and smooth, then the following limits
\[
\partial_{\nu}^{s}W(x,0):= -C_s^{-1} \left(\lim_{t \rightarrow 0+} t^{1-2s} \frac{\partial W}{\partial t}(x,t)\right) \quad  \text{with } C_s := \frac{2^{1-2s}\Gamma(1-s)}{\Gamma(s)},
\quad W = U \text{ or } V,
\]
are well defined and one must have
\begin{equation}\label{frac-localize}
(-\Delta)^{s}w = \partial_{\nu}^s W(x,0), \quad w = u \text{ or } v.
\end{equation}
We call these $U$ and $V$ the $s$-harmonic extensions of $u$ and $v$.
We point out that by a density argument, the relation \eqref{frac-localize} is satisfied in weak sense for $u \in H^s_0(\Omega)$ and $v \in D^s(\R^N)$.
In other words, it holds that for every $u$ and $\phi  \in H^s_0(\Omega)$,
\[
\left\langle u, \phi \right\rangle_{H^s_0(\Omega)} = C_s^{-1}\left\langle U, \Phi \right\rangle_{H^1_0(t^{1-2s},\mathcal{C})}
\quad \text{where} \quad U,\, \Phi = \text{ $s$-harmonic extensions of } u,\, \phi
\]
and the analogous statement holds for every $v$ and $\phi \in D^s(\R^N)$.
Thus the trace inequalities \eqref{ineq-trace1} and \eqref{ineq-trace2} are improved as
\[
\|U(\cdot, 0)\|_{H^s_0(\Omega)} = C_s^{-1}\|U\|_{H^1_0(t^{1-2s},\mathcal{C})},\quad \|U(\cdot, 0)\|_{D^s(\mathbb{R}^N)} = C_s^{-1}\|U\|_{D^1(t^{1-2s}, \R^{N+1}_+)}
\]
if \eqref{s-extension} and \eqref{s-extension-2} hold repectively.

By the above discussion, one can deduce that a function $u \in H^s_0 (\Omega)$ is a weak solution to  the nonlocal problem \eqref{eq-main}
if and only if its $s$-harmonic extension $U \in H^1_0(t^{1-2s},\mathcal{C})$ is a weak solution to the local problem
\begin{equation}\label{u0inc}
\left\{ \begin{array}{ll} \text{div}(t^{1-2s} \nabla U )= 0&\quad \text{in}~ \mathcal{C},\\
U = 0 &\quad \text{on} ~\partial_L \mathcal{C},
\\
\partial_{\nu}^{s} { U} = |U|^{2^* (s)-2} U(x,0) + \mu U(x,0) & \quad \text{on} ~\Omega \times \{0\},
\end{array}
\right.
\end{equation}
and similarly the problem $\eqref{eq-subcritical}$ corresponds to
\begin{equation}\label{eq-main-q}
\left\{ \begin{array}{ll} \textrm{div}(t^{1-2s}\nabla U) = 0 &\quad \textrm{in}~\mathcal{C},
\\
U=0 &\quad \textrm{on}~\partial_L \mathcal{C},
\\
\partial_{\nu}^s U = |U|^{p} U(x,0) + \mu U(x,0) &\quad \textrm{on}~\Omega \times \{0\},
\end{array}
\right.
\end{equation}
where $1 < p < 2^* (s)- 2$.
By weak solutions, we mean the following:
Let $g \in L^{\frac{2N}{N+2s}}(\Omega)$. Given the problem
\begin{equation}\label{eq-weak1}
\left\{\begin{array}{ll}
(-\Delta)^{s}u = g(x) &\quad \textrm{in}~\Omega,
\\
u = 0 &\quad \textrm{on}~\partial\Omega,
\end{array}\right.
\end{equation}
we say that a function $u \in H_0^{s}(\Omega)$ is a weak solution of \eqref{eq-weak1} provided
\begin{equation}
\int_{\Omega} (-\Delta)^{s/2}u \cdot (-\Delta)^{s/2}\phi \,dx = \int_{\Omega} g(x)\phi(x)\,dx
\end{equation}
for all $\phi \in H^s_0(\Omega)$.
Also, given the problem
\begin{equation}\label{eq-weak2}
\left\{
\begin{array}{ll}
\textrm{div}(t^{1-2s} \nabla U) = 0 &\quad \textrm{in}~\mathcal{C},
\\
U = 0 &\quad \textrm{on}~\partial_{L}\mathcal{C},
\\
\partial_{\nu}^{s} U = g(x)&\quad \textrm{on}~\Omega \times\{0\},
\end{array}
\right.
\end{equation}
we say that a function $U \in H_0^1(t^{1-2s}, \mathcal{C})$ is a weak solution of \eqref{eq-weak2} provided
\begin{equation}
\int_{\mathcal{C}} t^{1-2s}  \nabla U(x,t) \cdot \nabla \Phi(x,t)\, dx dt = C_s\int_{\Omega} g(x)\Phi(x,0)\,dx
\end{equation}
for all $\Phi \in H_0^1(t^{1-2s}, \mathcal{C})$.

\subsection{Weighted Sobolev and Sobolev-trace inequalities}\label{subsec_sobolev_trace}
Given any $\lambda > 0$ and $\xi \in \mathbb{R}^N$, let
\begin{equation}\label{bubble}
w_{\lambda, \xi} (x) = \mathfrak{c}_{N,s} \left( \frac{\lambda}{\lambda^2+|x-\xi|^2}\right)^{\frac{N-2s}{2}} \quad \text{for } x \in \mathbb{R}^N,
\end{equation}
where
\begin{equation}\label{cns}
\mathfrak{c}_{N,s} = 2^{\frac{N-2s}{2}} \left( \frac{\Gamma \left( \frac{N+2s}{2}\right)}{\Gamma \left( \frac{N-2s}{2}\right)}\right)^{\frac{N-2s}{4s}}.
\end{equation}
Then we have the following Sobolev inequality
\[
\left( \int_{\mathbb{R}^N} |u|^{2^*(s)} dx \right)^{\frac{1}{2^*(s)}} \leq \mathcal{S}_{N,s} \left( \int_{\mathbb{R}^N} |(-\Delta)^{s/2} u|^2 dx \right)^{1 \over 2}, \quad u \in H^s_0(\Omega),
\]
which attains the equality exactly when $u(x) = cw_{\lambda,\xi}(x)$ for any $c > 0,\, \lambda>0$ and $\xi \in \mathbb{R}^N$ (we refer to \cite{L2, CL, FL}).
Here,
\begin{equation}\label{ans}
\mathcal{S}_{N,s} = 2^{-2s} \pi^{-s} \frac{\Gamma \left(\frac{N-2s}{2}\right)}{\Gamma \left( \frac{N+2s}{2}\right)} \left[ \frac{\Gamma(N)}{\Gamma(N/2)}\right]^{2s/N}.
\end{equation}
It follows that for the Sobolev trace inequality
\begin{equation}\label{eq-sharp-trace-2}
\left( \int_{\mathbb{R}^N} |U(x,0)|^{2^*(s)} dx \right)^{\frac{1}{2^*(s)}} \leq \frac{\mathcal{S}_{N,s}}{\sqrt{C_s}} \left( \int_{\mathbb{R}^{N+1}_{+}} t^{1-2s} |\nabla U(x,t)|^2 dx dt \right)^{1 \over 2},
\quad U \in D^1(t^{1-2s}, \R^{N+1}_+),
\end{equation}
the equality is attained exactly by $U(x,t) = c W_{\lambda,\xi}(x,t)$, where $W_{\lambda,\xi}(x,t)$ is the $s$-harmonic extension of $w_{\lambda,\xi}$.
By zero extension, we also have
\begin{equation}\label{eq-sharp-trace}
\left( \int_{\Omega} |U(x,0)|^{2^*(s)} dx \right)^{\frac{1}{2^*(s)}} \leq \frac{\mathcal{S}_{N,s}}{\sqrt{C_s}} \left( \int_{\mathcal{C}} t^{1-2s} |\nabla U(x,t)|^2 dx dt \right)^{1 \over 2},
\quad U \in H^1_0(t^{1-2s}, \mathcal{C}).
\end{equation}
As an application, we obtain the following estimate.
\begin{lem}\label{lem-trace}
Let $w \in L^{p}(\Omega)$  for some $p < \frac{N}{2s}$. Assume that $U$ is a weak solution of the problem
\begin{equation}\label{eq-lem-basic}
\left\{\begin{array}{ll}
\textrm{\em div}(t^{1-2s} \nabla U) = 0&\quad \textrm{in}~\mathcal{C},
\\
U=0&\quad \textrm{on}~\partial_{L}\mathcal{C},
\\
\partial_{\nu}^s U = w &\quad \textrm{on}~\Omega \times \{0\}.
\end{array}
\right.
\end{equation}
Then we have
\begin{equation}
\left\| U(\cdot, 0)\right\|_{L^q (\Omega)} \leq C_{p,q} \left\| w \right\|_{L^p (\Omega)},
\end{equation}
for any $q$ such that $\frac{N}{q} \leq \frac{N}{p} -2s$.
\end{lem}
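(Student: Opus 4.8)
The plan is to reduce the claimed estimate to the sharp Sobolev trace inequality \eqref{eq-sharp-trace} by a straightforward duality argument, exactly in the spirit of the classical $L^p$--$L^q$ estimates for $(-\Delta)^{-1}$. First I would observe that it suffices to treat the borderline exponent $q$ defined by $\frac1q = \frac1p - \frac{2s}{N}$; for smaller $q$ (larger $\frac1q$) the bound follows from this case together with H\"older's inequality on the bounded domain $\Omega$. Note that the hypothesis $p < N/(2s)$ guarantees $q \in (2^*(s)', \infty)$ is a legitimate finite exponent and that the dual exponent $q'$ satisfies $q' > (2^*(s))' = \frac{2N}{N+2s}$, which is precisely the regularity class in which \eqref{eq-weak2} is posed.

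The core step is the duality pairing. Given $\varphi \in C_c^\infty(\Omega)$, let $\Psi \in H^1_0(t^{1-2s},\mathcal C)$ be the $s$-harmonic extension solving \eqref{eq-weak2} with datum $\varphi$, i.e. $\partial_\nu^s \Psi = \varphi$ on $\Omega\times\{0\}$. Testing the weak formulation of \eqref{eq-lem-basic} with $\Psi$ and the weak formulation for $\Psi$ with $U$, and using the symmetry of the bilinear form $\int_{\mathcal C} t^{1-2s}\nabla U\cdot\nabla\Psi\,dxdt$, I get
\[
C_s \int_\Omega U(x,0)\,\varphi(x)\,dx = \int_{\mathcal C} t^{1-2s}\nabla U\cdot\nabla\Psi\,dxdt = C_s\int_\Omega w(x)\,\Psi(x,0)\,dx.
\]
Hence $\big|\int_\Omega U(\cdot,0)\varphi\,dx\big| \le \|w\|_{L^p(\Omega)}\|\Psi(\cdot,0)\|_{L^{p'}(\Omega)}$. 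Since $p' $ lies between $2$ and $2^*(s)$, I interpolate: $\|\Psi(\cdot,0)\|_{L^{p'}} \le \|\Psi(\cdot,0)\|_{L^2}^{1-\theta}\|\Psi(\cdot,0)\|_{L^{2^*(s)}}^{\theta}$, then control $\|\Psi(\cdot,0)\|_{L^{2^*(s)}}$ by $\|\Psi\|_{H^1_0(t^{1-2s},\mathcal C)}$ via \eqref{eq-sharp-trace} and $\|\Psi(\cdot,0)\|_{L^2}$ by $\|\Psi\|_{H^1_0}$ via the compact embedding $H^s_0(\Omega)\hookrightarrow L^2(\Omega)$ together with \eqref{ineq-trace1}; in turn $\|\Psi\|^2_{H^1_0(t^{1-2s},\mathcal C)} = C_s\int_\Omega \varphi\,\Psi(\cdot,0)\,dx \le C_s\|\varphi\|_{L^{q}}\|\Psi(\cdot,0)\|_{L^{q'}}$, and since $q' = p'$ in the borderline case this closes an inequality for $\|\Psi(\cdot,0)\|_{L^{p'}}$, yielding $\|\Psi(\cdot,0)\|_{L^{p'}(\Omega)} \le C\|\varphi\|_{L^{q}(\Omega)}$. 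Feeding this back and taking the supremum over $\varphi$ with $\|\varphi\|_{L^q}\le 1$ gives $\|U(\cdot,0)\|_{L^{q}(\Omega)} \le C_{p,q}\|w\|_{L^p(\Omega)}$ by duality.

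The only genuinely delicate point is bookkeeping the exponents so that the self-improving inequality for $\Psi$ actually closes: one must check that the interpolation parameter $\theta$ and the relation $\frac1{q'} = \frac1{p'}$ are mutually consistent with $\frac1q = \frac1p - \frac{2s}{N}$, and that no borderline (infinite) exponent is hit, which is exactly where $p < N/(2s)$ is used. A cleaner alternative, which I would actually present, is to invoke the scale-invariant estimate directly: the operator sending $w$ to $U(\cdot,0)$ is, up to the constant $C_s$, the inverse fractional Laplacian $(-\Delta)^{-s}$ on $\Omega$ with zero Dirichlet data, and the mapping $(-\Delta)^{-s}: L^p(\Omega)\to L^q(\Omega)$ for $\frac1q = \frac1p - \frac{2s}{N}$ is the standard Hardy--Littlewood--Sobolev / Riesz potential estimate (bounded since $\Omega$ has finite measure), so one may simply cite it. The remaining cases $\frac{N}{q} \le \frac{N}{p} - 2s$ then follow from $|\Omega| < \infty$ and H\"older. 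I expect writing out the duality argument with the sharp trace inequality to be routine; the main obstacle, such as it is, is purely the arithmetic of exponents.
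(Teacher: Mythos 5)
Your main duality argument has a genuine gap and does not close. First, the claim that ``$q'=p'$ in the borderline case'' is false: from $\frac1q=\frac1p-\frac{2s}{N}$ one has $q>p$, hence $q'<p'$ (for instance $p=\frac{2N}{N+2s}$, $q=\frac{2N}{N-2s}$ gives $p'=\frac{2N}{N-2s}$ but $q'=\frac{2N}{N+2s}$). Relatedly, to recover $\|U(\cdot,0)\|_{L^q}$ by duality you must take the supremum over $\|\varphi\|_{L^{q'}}\le 1$, not over $\|\varphi\|_{L^{q}}\le 1$. Once these are corrected, the pairing $C_s\int_\Omega U(\cdot,0)\varphi\,dx=C_s\int_\Omega w\,\Psi(\cdot,0)\,dx$ reduces the lemma to the estimate $\|\Psi(\cdot,0)\|_{L^{p'}}\le C\|\varphi\|_{L^{q'}}$, and since $\frac{N}{p'}=\frac{N}{q'}-2s$ this is exactly an instance of the lemma itself for the borderline pair $(q',p')$: the reduction is circular. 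The interpolation step cannot repair this, because controlling the trace of $\Psi$ through \eqref{eq-sharp-trace} together with the $L^2$ embedding only gives integrability up to $2^*(s)$; but the lemma is needed (and is used in Section 4, e.g. in \eqref{eq-d-1} where $q_1>2^*(s)$) precisely for exponents beyond $2^*(s)$, which no interpolation between $L^2$ and $L^{2^*(s)}$ trace bounds can reach. The only case in which the energy identity closes by itself is $(p,q)=((2^*(s))',\,2^*(s))$.

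Your fallback---identifying $w\mapsto U(\cdot,0)$ with $C_s^{-1}$ times the inverse of the spectral Dirichlet fractional Laplacian and citing Hardy--Littlewood--Sobolev---is a viable alternative route, but as written it is an assertion rather than a proof: HLS concerns the Riesz potential on $\R^N$, and to transfer it you must still show that the Green kernel of $(-\Delta_\Omega)^{-s}$ is dominated by $c|x-y|^{2s-N}$ (for example via domination of the Dirichlet heat kernel by the free heat kernel and the subordination formula $(-\Delta_\Omega)^{-s}=\frac{1}{\Gamma(s)}\int_0^\infty t^{s-1}e^{t\Delta_\Omega}\,dt$); also the finiteness of $|\Omega|$ is only relevant for the strict inequality $\frac Nq<\frac Np-2s$, not for the borderline case. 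By contrast, the paper's proof is a short direct argument avoiding all of this: test \eqref{eq-lem-basic} with $|U|^{\beta-1}U$, apply \eqref{eq-sharp-trace} and H\"older to obtain
\begin{equation*}
\bigl\| |U|^{\frac{\beta+1}{2}}(\cdot,0)\bigr\|^2_{L^{2^*(s)}(\Omega)}\le C_\beta\,\bigl\| |U|^{\beta}(\cdot,0)\bigr\|_{L^{\frac{\beta+1}{2\beta}2^*(s)}(\Omega)}\,\|w\|_{L^p(\Omega)},
\end{equation*}
and choose $\beta$ so that $q=\frac{N(\beta+1)}{N-2s}$; the factor $\|U(\cdot,0)\|_{L^q}^{\beta}$ cancels and yields the borderline estimate for every admissible $q$, including $q>2^*(s)$. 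You should either adopt this test-function argument or supply the kernel-domination step needed to make the Riesz-potential citation rigorous.
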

\begin{proof}
We multiply \eqref{eq-lem-basic} by $|U|^{\beta-1}U$ for some  $\beta >1$ to get
\begin{equation}
 \int_{\Omega} w(x) |U|^{\beta-1} U (x,0)\, dx= \beta \int_{\mathcal{C}} t^{1-2s} |U|^{\beta-1}|\nabla U|^2\, dxdt.
\end{equation}
Then, applying the trace embedding \eqref{eq-sharp-trace} and H\"older's inequality we can observe
\begin{equation}\label{eq-lem-beta}
\left\| |U|^{\frac{\beta+1}{2}}(\cdot,0) \right\|^2_{L^{\frac{2N}{N-2s}}(\Omega)} \leq C_\beta \left\| |U|^{\beta}(\cdot,0)\right\|_{L^{\frac{\beta+1}{2\beta}\cdot \frac{2N}{N-2s}}} \left\| w\right\|_{p},
\end{equation}
where $p$ satisfies $\frac{1}{p} + \frac{(N-2s)\beta}{N(\beta+1)} =1$. Let $q= \frac{N(\beta+1)}{N-2s}$, 
then \eqref{eq-lem-beta} gives the desired inequality.
\end{proof}
We will also make use of the following weighted Sobolev inequality.

\begin{prop}\cite[Theorem 1.3]{FKS}
Let $\Omega$ be an open bounded set in $\mathbb{R}^{N+1}$.
Then there exists a constant $C = C(N, s, \Omega) > 0$ such that
\begin{equation}\label{eq-sobolev-weight}
\left( \int_{\Omega} |t|^{1-2s} |U(x,t)|^{\frac{2(N+1)}{N}} dx dt\right)^{\frac{N}{2(N+1)}}
\leq C \left( \int_{\Omega} |t|^{1-2s} |\nabla U(x,t)|^2 dx dt \right)^{1 \over 2}
\end{equation}
holds for any function $U$ whose support is contained in $\Omega$ whenever the right-hand side is well-defined.
\end{prop}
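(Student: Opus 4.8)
The plan is to deduce this from the general weighted Sobolev inequality of Fabes, Kenig and Serapioni \cite[Theorem 1.3]{FKS}, so that the work reduces to checking that the weight $w(x,t)=|t|^{1-2s}$ falls within the scope of that theorem; I will also recall the structure of its proof, since this is essentially the only input borrowed from weighted harmonic analysis. The key observation is that $w$ belongs to the Muckenhoupt class $A_2(\R^{N+1})$: a one–dimensional power weight $|t|^{a}$ satisfies the $A_2$ condition on $\R$ exactly when $-1<a<1$, and since $s\in(0,1)$ we have $a=1-2s\in(-1,1)$; because $w$ depends only on the variable $t$, a Fubini argument promotes this to $w\in A_2(\R^{N+1})$, with $A_2$ constant depending only on $N$ and $s$.

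Granting the $A_2$ membership, I would run the classical argument in two steps. First, a local weighted Poincar\'e--Sobolev inequality on balls: for every ball $B=B_r(z_0)\subset\R^{N+1}$ and $U\in C^1(\overline B)$,
\[
\left(\frac{1}{w(B)}\int_B |U-U_{B,w}|^{\frac{2(N+1)}{N}}\,w\,dxdt\right)^{\frac{N}{2(N+1)}}\le C\,r\left(\frac{1}{w(B)}\int_B|\nabla U|^2\,w\,dxdt\right)^{1/2},
\]
where $U_{B,w}=w(B)^{-1}\int_B U\,w\,dxdt$. This follows, as in the unweighted case, from the pointwise bound $|U(z)-U_{B,w}|\le C\int_B|z-y|^{-N}|\nabla U(y)|\,dy$ combined with boundedness of the Riesz potential $I_1f(z)=\int_{\R^{N+1}}|z-y|^{-N}f(y)\,dy$ on the weighted Lebesgue spaces associated with $w\,dxdt$; the gain of the integrability exponent by the factor $\frac{N+1}{N}=\frac{d}{d-1}$ with $d=N+1$ is precisely the universal Sobolev gain available for an arbitrary $A_2$ weight, and it is here that the $A_2$ structure of $w$ enters quantitatively.

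Second, I would globalize. Given $U$ with $\supp U\subset\Omega$ and $\Omega\subset\R^{N+1}$ open and bounded, choose a ball $B\supset\Omega$ with radius comparable to $\mathrm{diam}(\Omega)$, apply the local estimate on $B$, and use that $U$ vanishes on $B\setminus\Omega$, which has positive $w$–measure, to control the weighted average $U_{B,w}$ by the right-hand side of \eqref{eq-sobolev-weight} via one more weighted Poincar\'e inequality; absorbing this term yields \eqref{eq-sobolev-weight} with $C=C(N,s,\Omega)$. I expect the main obstacle to be the local estimate in the first step -- the weighted Poincar\'e--Sobolev inequality with the specific gain $\frac{d}{d-1}$ -- whose proof rests on the $A_2$ theory for the Riesz potential and is exactly the content of \cite{FKS}; by contrast, verifying $w\in A_2$ and the covering/globalization are routine and I would not write them out in detail.
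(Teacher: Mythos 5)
Your proposal is correct and takes essentially the same route as the paper: the paper offers no proof of its own but simply cites \cite[Theorem 1.3]{FKS}, and your reduction (checking $|t|^{1-2s}\in A_2(\R^{N+1})$ because $1-2s\in(-1,1)$, then applying the weighted Sobolev inequality of FKS with $n=N+1$, giving exactly the exponent $\frac{2(N+1)}{N}$) is the intended argument. The only simplification worth noting is that FKS Theorem 1.3 is already stated for functions supported in a ball, so taking a ball $B\supset\Omega$ makes the globalization immediate and no control of the weighted average $U_{B,w}$ is needed.
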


\subsection{Useful lemmas} Here we prepare some lemmas which will be used importantly throughout the paper.
\begin{lem}\label{lem-maximum}
Suppose that $V \in H^1_0(t^{1-2s},\mathcal{C})$ is a weak solution of the following problem
\begin{equation}\label{eq-lem-v}
\left\{ \begin{array}{ll} \textrm{\em div}(t^{1-2s} \nabla V) = 0&\quad \textrm{on}~\mathcal{C},
\\
V(x,t) =0 &\quad \textrm{on}~\partial_L\mathcal{C},
\\
\partial_{\nu}^{s} V(x,0) = g(x) &\quad \textrm{on}~\Omega \times\{0\}
\end{array}
\right.
\end{equation}
for some nonnegative $g$.
Then $V$ is nonnegative everywhere.
\end{lem}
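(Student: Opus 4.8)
The plan is to deduce the nonnegativity of $V$ from the weak formulation of \eqref{eq-lem-v} by testing it against the negative part of $V$ and exploiting the sign of the datum $g$. Write $V^-:=\max(-V,0)$. The first step is to verify that $V^-\in H^1_0(t^{1-2s},\mathcal{C})$, with weak gradient $\nabla V^-=-\chi_{\{V<0\}}\nabla V$ and with trace $V^-(\cdot,0)=\max(-V(\cdot,0),0)\ge 0$ on $\Omega\times\{0\}$. This is the Stampacchia chain rule for the Lipschitz function $r\mapsto\max(-r,0)$, which remains valid in the present weighted setting since $|t|^{1-2s}$ is a Muckenhoupt $A_2$ weight; moreover $V^-$ vanishes on $\partial_L\mathcal{C}$ because $V$ does, and $V^-(\cdot,0)\in H^s_0(\Omega)\hookrightarrow L^{2^*(s)}(\Omega)$, so that the boundary pairing against $g$ is meaningful.

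With this in hand, I would substitute $\Phi=V^-$ into the identity
\[
\int_{\mathcal{C}}t^{1-2s}\,\nabla V\cdot\nabla\Phi\,dx\,dt=C_s\int_{\Omega}g(x)\,\Phi(x,0)\,dx .
\]
On the left-hand side, $\nabla V\cdot\nabla V^-=-\chi_{\{V<0\}}|\nabla V|^2=-|\nabla V^-|^2$, so it equals $-\int_{\mathcal{C}}t^{1-2s}|\nabla V^-|^2\,dx\,dt=-\|V^-\|^2_{H^1_0(t^{1-2s},\mathcal{C})}$. On the right-hand side, $C_s>0$, $g\ge 0$ and $V^-(\cdot,0)\ge 0$, so it is $\ge 0$. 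Therefore $\|V^-\|_{H^1_0(t^{1-2s},\mathcal{C})}=0$; since the norm on $H^1_0(t^{1-2s},\mathcal{C})$ is precisely the weighted Dirichlet norm \eqref{weighted_norm} (a genuine norm, by the weighted Poincaré inequality stemming from the vanishing lateral trace), this forces $V^-\equiv 0$, i.e.\ $V\ge 0$ almost everywhere in $\mathcal{C}$, which is the assertion.

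The only delicate point is the first step, namely verifying rigorously that the truncation $V\mapsto V^-$ keeps us inside $H^1_0(t^{1-2s},\mathcal{C})$ with the stated weak gradient and trace. One standard way to do this without invoking the weighted chain rule is to approximate $V$ in $H^1_0(t^{1-2s},\mathcal{C})$ by functions $U_k\in C_{0,L}^{\infty}(\mathcal{C})$, test \eqref{eq-lem-v} against the (Lipschitz) truncations $\min(U_k,0)\in H^1_0(t^{1-2s},\mathcal{C})$, and pass to the limit $k\to\infty$ using $\nabla U_k\to\nabla V$ in $L^2(t^{1-2s},\mathcal{C})$, the trace inequality \eqref{ineq-trace1}, and the weighted Sobolev inequality \eqref{eq-sobolev-weight} to justify the convergences; the sign of every term is preserved in the limit and the same conclusion follows. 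Apart from this routine verification the proof is an elementary computation with signs, so I do not anticipate any further obstacle.
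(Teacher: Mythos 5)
Your proof is correct and follows essentially the same argument as the paper: test the weak formulation with $V^-=\max(-V,0)$, observe the left side is $-\int_{\mathcal C}t^{1-2s}|\nabla V^-|^2$ while the right side is nonnegative, and conclude $V^-\equiv 0$. The extra care you take in justifying that $V^-$ is an admissible test function (via the chain rule for the $A_2$ weight or an approximation argument) is a point the paper leaves implicit, but it does not change the route.
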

\begin{proof}
Let $V_{-} = \max\{ 0, -V\}$. By testing $V_{-}$, the definition of weak formulation implies
\begin{equation}
-\int_{\mathcal{C}} t^{1-2s}|\nabla V_{-}|^2\, dxdt = C_s\int_{\Omega} g(x) \cdot V_{-}(x,0)\, dx \geq 0
\end{equation}
and thus
\begin{equation*}
\int_{\mathcal{C}}t^{1-2s} |\nabla V_{-}|^2 (x,t) dx dt = 0.
\end{equation*}
It proves that $V_{-} \equiv 0$. The lemma is proved.
\end{proof}
Next we state a variant of the concentration compactness principle \cite{S} for the extended problems.
\begin{lem}\label{lem-cc-bounded}
For $n \in \mathbb{N}$ let $U_n$ be a solution of \eqref{eq-main-q} with $p = p_n \rightarrow 2^{*}(s)-2$ such that $\|U_n \|_{H^1_0(t^{1-2s}, \mathcal{C})} < C$ for some $C$ independent of $n \in \mathbb{N}$.
Then, for some $k \in \mathbb{N}$, there are $k$-sequences $\{(\lambda_{n}^j, x_n^j )\}_{n=1}^{\infty} \subset \R_+ \times \Omega,\,
1\leq j \leq k$, a function $V^0 \in H^1_0(t^{1-2s}, \mathcal{C})$
and $k$-functions $V^j  \in D^1(t^{1-2s},\mathbb{R}^{N+1}_{+}),\, 1 \leq j \leq k$ satisfying
\begin{itemize}
\item $U_n \rightharpoonup V^0 $ weakly in $H^1_0(t^{1-2s}, \mathcal{C})$;
\item $U_n -\left(V^0 + \sum_{j=1}^{k}\rho_{n}^{j}( V^{j})\right) \to 0
\text{ in } H^1_0(t^{1-2s}, \mathcal{C}) \text{ as } n \to \infty$, where
\[
\rho_n^j(V^j) = (\lambda_n^j)^{\frac{N}{2^*(s)}}V^j(\lambda_n^j(\cdot-x_n^j));
\]
\item $V^0$ is a solution of \eqref{u0inc}, and $V^j$ are non-trivial solutions of
\begin{equation}\label{eq-critical}
\left\{ \begin{array}{ll}
\textrm{\em div}(t^{1-2s} \nabla V) = 0 &\quad \textrm{in}~\mathbb{R}^{N+1}_{+},
\\
\partial_{\nu}^s V = |V|^{2^* (s) -2} V &\quad \textrm {on}~\mathbb{R}^{N} \times \{0\}.
\end{array}
\right.
\end{equation}
\end{itemize}
Moreover, we have
\begin{equation}\label{eq-cc-cond}
\frac{\lambda_n^i}{\lambda_n^j} + \frac{\lambda_n^j}{\lambda_n^i} + \lambda_n^i \lambda_n^j |x_n^i -x_n^j|^2 \rightarrow \infty~\textrm{as}~ n \rightarrow \infty~\textrm{for all}~ i \neq j.
\end{equation}
\end{lem}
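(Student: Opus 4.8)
The plan is to run Struwe's iterative profile decomposition \cite{S} for \eqref{eq-main-q}, the one genuinely new ingredient being an $\ep_0$-regularity bound (the nonlocal Moser iteration of Appendix B) that substitutes for the missing compactness of the critical trace embedding. \emph{First}, since $\{U_n\}$ is bounded, pass to a subsequence with $U_n\rightharpoonup V^0$ in $H^1_0(t^{1-2s},\mathcal C)$ and $U_n(\cdot,0)\to V^0(\cdot,0)$ a.e.\ in $\Omega$. In the weak formulation of \eqref{eq-main-q} the term $|U_n|^{p_n}U_n(\cdot,0)$ is bounded in $L^{\rho_0}(\Omega)$ for a fixed $\rho_0>1$ (because $p_n+1<2^*(s)-1$ while $U_n(\cdot,0)$ is bounded in $L^{2^*(s)}$), hence equi-integrable, so by Vitali and $p_n\to 2^*(s)-2$ one passes to the limit and obtains that $V^0$ solves \eqref{u0inc}. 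Put $Z_n^1:=U_n-V^0\rightharpoonup0$. If $Z_n^1\to0$ strongly we finish with $k=0$; otherwise Brezis--Lieb applied to $\|\cdot\|^2_{H^1_0(t^{1-2s},\mathcal C)}$ and to $\int_\Omega|\cdot(\cdot,0)|^{2^*(s)}$ gives $\|U_n\|^2=\|V^0\|^2+\|Z_n^1\|^2+o(1)$ together with the analogous trace splitting, and $Z_n^1$ solves \eqref{eq-critical} modulo $o(1)$ in the dual space; testing the latter with $Z_n^1$ and using \eqref{eq-sharp-trace} yields a universal $\delta_0>0$ with $\int_\Omega|Z_n^1(\cdot,0)|^{2^*(s)}\geq\delta_0$ for $n$ large.

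\emph{Second} (extracting one profile), extend $Z_n^1(\cdot,0)$ by zero to $\R^N$ and set $Q_n(r)=\sup_{x\in\R^N}\int_{B_r(x)}|Z_n^1(y,0)|^{2^*(s)}\,dy$. Fix $\eta\in(0,\delta_0)$ smaller than the threshold $\ep_0$ below and choose $r_n>0$, $x_n^1\in\R^N$ with $\int_{B_{r_n}(x_n^1)}|Z_n^1(y,0)|^{2^*(s)}\,dy=Q_n(r_n)=\eta$; with $\lambda_n^1:=1/r_n$ rescale $\widehat{Z}_n^1(z):=(\lambda_n^1)^{-\frac{N-2s}{2}}Z_n^1\big((\lambda_n^1)^{-1}z+(x_n^1,0)\big)$, extended by zero off the rescaled cylinder, so that (the $D^1(t^{1-2s},\cdot)$-norm being preserved) $\widehat{Z}_n^1\rightharpoonup V^1$ in $D^1(t^{1-2s},\R^{N+1}_+)$ up to a subsequence, with $\sup_x\int_{B_1(x)}|\widehat{Z}_n^1(y,0)|^{2^*(s)}\,dy=\eta$ attained on $B_1(0)$. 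The key tool is an $\ep_0$-regularity estimate for approximate solutions of \eqref{eq-critical}: if the Dirichlet energy on a half-ball $B_2^+(z_0)$ is $<\ep_0$, then $\|\widehat{Z}_n^1(\cdot,0)\|_{L^\infty(B_1^0(z_0))}$ is controlled by $\|\widehat{Z}_n^1(\cdot,0)\|_{L^{2^*(s)}(B_2^0(z_0))}$ (Appendix B); covering $\R^N\times\{0\}$ by such half-balls gives uniform local $L^\infty$-bounds, whence $\widehat{Z}_n^1(\cdot,0)\to V^1(\cdot,0)$ in $L^{2^*(s)}_{\mathrm{loc}}$, so $\int_{B_1^0}|V^1(\cdot,0)|^{2^*(s)}=\eta>0$ and $V^1\not\equiv0$. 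One checks $\lambda_n^1\to\infty$ (else a bounded rescaling of $Z_n^1\rightharpoonup0$ would force $V^1\equiv0$) and then, passing to the limit in the approximate equation for $\widehat{Z}_n^1$ — the $\mu$-term carrying a vanishing factor $(\lambda_n^1)^{-2s}$ and the critical term surviving in the limit — that $V^1$ solves \eqref{eq-critical}; finally $\lambda_n^1\,\mathrm{dist}(x_n^1,\partial\Omega)\to\infty$, since otherwise $V^1$ would be a nontrivial finite-energy solution of \eqref{eq-critical} on a proper sub-half-space of $\R^{N+1}_+$ vanishing on part of $\R^N\times\{0\}$, excluded by a Liouville/Pohozaev argument, so the rescaled cylinders exhaust $\R^{N+1}_+$.

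\emph{Third} (iteration, termination, separation), put $Z_n^2:=Z_n^1-\rho_n^1(V^1)\rightharpoonup0$ (using $\lambda_n^1\to\infty$, which gives $\rho_n^1(V^1)\rightharpoonup0$); Brezis--Lieb gives $\|Z_n^2\|^2=\|Z_n^1\|^2-\|V^1\|^2_{D^1(t^{1-2s})}+o(1)$ and the analogous trace splitting. If $Z_n^2\to0$ we stop with $k=1$; otherwise repeat the second step on $Z_n^2$ to obtain $(\lambda_n^2,x_n^2)$ and a nontrivial $V^2$ solving \eqref{eq-critical}, and so on. At the $j$-th stage the selected scale is separated from the earlier ones as in \eqref{eq-cc-cond}: by induction the previously extracted bubbles are mutually orthogonal, so $Z_n^j$ rescaled in any earlier frame tends weakly to $0$ and therefore carries no asymptotic mass at scale $1/\lambda_n^i$ near $x_n^i$, which forbids $Q_n$ for $Z_n^j$ from picking a comparable scale and location. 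Every nontrivial solution $V$ of \eqref{eq-critical} obeys $\|V\|^2_{D^1(t^{1-2s})}=C_s\int_{\R^N}|V(\cdot,0)|^{2^*(s)}$ and, by \eqref{eq-sharp-trace-2}, $\|V\|^2_{D^1(t^{1-2s})}\geq\beta_0$ for an absolute $\beta_0>0$; since $\|V^0\|^2+\sum_{j\geq1}\|V^j\|^2_{D^1(t^{1-2s})}\leq\limsup_n\|U_n\|^2<\infty$ by the telescoped Brezis--Lieb identities, the process terminates after finitely many steps $k$, yielding $U_n-\big(V^0+\sum_{j=1}^k\rho_n^j(V^j)\big)\to0$ in $H^1_0(t^{1-2s},\mathcal C)$, which is the assertion.

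The step I expect to be the real obstacle is the second one: the $\ep_0$-regularity estimate, which replaces the missing compactness of the critical trace embedding and is what makes the concentration-function argument converge, together with the exclusion of boundary concentration. These are exactly what the nonlocal Moser iteration of Appendix B and the (local) Pohozaev identity of Appendix C are for, and adapting them to the degenerate weight $t^{1-2s}$ and to the lateral boundary $\partial_L\mathcal C$ is where the work lies; the Brezis--Lieb splittings, the energy quantization, and the separation condition are routine once those estimates are available.
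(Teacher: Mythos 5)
Your proposal is correct in outline and follows essentially the same route as the paper, which does not write out a proof at all but simply invokes Struwe's global compactness argument (and Almaraz's adaptation of it to the extension setting for $s=1/2$): weak limit plus iterative bubble extraction by rescaling, energy quantization via Brezis--Lieb, and exclusion of boundary bubbles. Your sketch, including the $\ep_0$-regularity/Moser-iteration ingredient and the treatment of the subcritical exponents $p_n\to 2^*(s)-2$, is precisely the kind of modification the authors have in mind, so there is nothing methodologically different to compare.
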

\begin{proof}
The proof follows without difficulty by modifying the proof of the concentration compactness result for \eqref{eq-BN}(see \cite{S, S2}), and we omit the details for the sake of simplicity of the paper.
We refer to the paper \cite{A} where S. Almaraz modified the argument in \cite{S} for studying the boundary Yamabe flow.
His setting corresponds to the case $s=1/2$ of the extended problems considered here.
\end{proof}
It is useful to know the decay rate of any entire solutions to \eqref{eq-critical}.
\begin{lem}\label{lem-entire-decay} Suppose that $V \in D^1(t^{1-2s}, \mathbb{R}^{N+1}_{+})$ is a weak solution of \eqref{eq-critical}.
Then there exists a constant $C>0$ such that
\begin{equation*}
|V(x, 0)| \leq \frac{C}{(1 + |x|)^{N-2s}}.
\end{equation*}
\end{lem}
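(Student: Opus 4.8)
The plan is to establish the decay estimate in two stages: first prove an integrability/regularity improvement for $V(\cdot,0)$ via a Moser-type iteration (the nonlocal version promised in Appendix B), then upgrade the resulting $L^\infty$ control to the sharp pointwise rate $(1+|x|)^{-(N-2s)}$ using the explicit representation of $s$-harmonic extensions through the Poisson kernel. Since $V \in D^1(t^{1-2s},\mathbb{R}^{N+1}_+)$, its trace $v := V(\cdot,0)$ lies in $D^s(\mathbb{R}^N) \hookrightarrow L^{2^*(s)}(\mathbb{R}^N)$, so the right-hand side $|v|^{2^*(s)-2}v$ of \eqref{eq-critical} lies in $L^{2^*(s)/(2^*(s)-1)}(\mathbb{R}^N)$. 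Applying Lemma \ref{lem-trace} (with the global Sobolev-trace inequality \eqref{eq-sharp-trace-2} in place of \eqref{eq-sharp-trace}) gives an immediate gain of integrability. Iterating this—using that $|v|^{2^*(s)-2}$ acts as a multiplier whose exponent improves at each step because $2^*(s)-2 < 2^*(s)/2$ when $N > 2s$ (equivalently the zeroth-order coefficient is subcritical after the first bootstrap)—one reaches $v \in L^q(\mathbb{R}^N)$ for all finite $q$, and then $v \in L^\infty(\mathbb{R}^N)$. I would carry this out precisely by invoking the Moser iteration lemma of Appendix B applied to the equation $(-\Delta)^s v = c(x) v$ with $c(x) = |v(x)|^{2^*(s)-2} \in L^{N/2s}(\mathbb{R}^N)$, which yields $v \in L^\infty$ and in fact $v(x) \to 0$ as $|x| \to \infty$.

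Next, having $v \in L^\infty(\mathbb{R}^N) \cap L^{2^*(s)}(\mathbb{R}^N)$ with $v(x) \to 0$ at infinity, the right-hand side $f := |v|^{2^*(s)-2}v$ satisfies $|f(x)| \le \|v\|_{L^\infty}^{2^*(s)-2}|v(x)|$ and, more usefully, $f \in L^1(\mathbb{R}^N) \cap L^\infty(\mathbb{R}^N)$ with $|f(x)| \to 0$. The $s$-harmonic extension solving \eqref{s-extension-2} with $\partial_\nu^s V = f$ can be written via the Poisson kernel for $\mathrm{div}(t^{1-2s}\nabla\cdot)$ on $\mathbb{R}^{N+1}_+$, and consequently
\[
v(x) = c_{N,s}\int_{\mathbb{R}^N} \frac{f(y)}{|x-y|^{N-2s}}\,dy,
\]
i.e. $v = I_{2s} f$ up to a constant, where $I_{2s}$ is the Riesz potential of order $2s$. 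The decay of $v$ then follows from a standard estimate on the Riesz potential of an $L^1 \cap L^\infty$ function that itself decays: splitting the integral over $|y| < |x|/2$, $|x-y| < |x|/2$, and the remaining region, and using $|f(y)| \le \|v\|_\infty^{2^*(s)-2}|v(y)|$ together with a continuity/bootstrap argument, yields $|v(x)| \le C(1+|x|)^{-(N-2s)}$. A clean way to run the last step: set $h(R) := \sup_{|x|\ge R}|x|^{N-2s}|v(x)|$ and show $h(R)$ is finite and, via the integral representation and the a priori decay $|v(y)| \lesssim |y|^{-\alpha}$ for the largest $\alpha$ known so far, that the exponent $\alpha$ can be pushed up to $N-2s$ in finitely many steps (using $2^*(s)-2 = 4s/(N-2s)$ and $N > 2s$ so that the relevant iteration map has the right fixed point).

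The main obstacle I anticipate is the second stage: justifying the Poisson-kernel representation $v = c_{N,s}\, I_{2s}f$ rigorously in the class $D^1(t^{1-2s},\mathbb{R}^{N+1}_+)$ (as opposed to for compactly supported smooth data, where \eqref{frac-localize} was stated), and then controlling the Riesz potential near infinity with the sharp exponent rather than a lossy one. One must be careful that the naive bound $|I_{2s}f(x)| \lesssim \|f\|_1 |x|^{-(N-2s)}$ only holds away from the support concentration and requires $f$ itself to decay faster than $|x|^{-N}$ to be summable against the slowly varying kernel tail; this is exactly why the bootstrap on $h(R)$ (feeding the current decay rate of $v$ back into $f = |v|^{2^*(s)-2}v$) is needed, and one checks that the iteration terminates precisely because $2^*(s)-2>0$ improves the exponent strictly at each stage until it saturates at $N-2s$. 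The regularity bootstrap of the first stage is routine given Appendix B, so the real work is packaging the potential-theoretic tail estimate cleanly.
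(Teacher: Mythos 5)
Your first stage (boundedness of $V(\cdot,0)$ via a Brezis--Kato/Moser type iteration using the trace inequality and the kernel representation) is essentially the same as the paper's first step and is fine. The gap is in the second stage. The exponent-improving iteration on the tail has no valid starting point: from $v:=V(\cdot,0)\in L^{\infty}\cap L^{2^*(s)}$ with $v(x)\to 0$ qualitatively, you have neither an initial algebraic decay rate $|v(y)|\lesssim |y|^{-\alpha}$ with $\alpha>0$ nor $f=|v|^{2^*(s)-2}v\in L^1(\mathbb{R}^N)$. The latter would require $v\in L^{\frac{N+2s}{N-2s}}(\mathbb{R}^N)$, which is an integrability \emph{below} the critical exponent $2^*(s)=\frac{2N}{N-2s}$ and is not available; without it the far-field region $|x-y|\geq |x|/2$ does not yield the $|x|^{-(N-2s)}$ bound, and without a strictly positive starting $\alpha$ the near-infinity region does not even converge when fed back into $f$. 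So the quantity $h(R)=\sup_{|x|\geq R}|x|^{N-2s}|v(x)|$ cannot be shown finite by the scheme as described; its finiteness is essentially the content of the lemma. A downward-integrability or weighted-norm absorption argument on exterior domains could in principle supply the missing starting rate, but that is a genuine piece of work which the proposal only gestures at ("a continuity/bootstrap argument").

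The paper avoids this difficulty entirely with the Kelvin transform: setting $W(z)=|z|^{-(N-2s)}V\left(z/|z|^2\right)$, the conformal invariance of the critical exponent shows that $W$ solves the same problem \eqref{eq-critical} with $\|W\|_{D^1(t^{1-2s},\mathbb{R}^{N+1}_+)}\leq C\|V\|_{D^1(t^{1-2s},\mathbb{R}^{N+1}_+)}$, so the boundedness argument of the first stage applies verbatim to $W$; boundedness of $W$ near the origin is exactly the decay $|V(z)|\leq C|z|^{-(N-2s)}$ at infinity. You should either adopt this route or supply the missing initial decay/integrability step before running your Riesz-potential iteration; note also that the pointwise representation $v=c_{N,s}I_{2s}f$ in the energy class, which you flag, is the same kernel expression the paper itself invokes, so that part is not the issue.
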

\begin{proof}
We first show that $V$ is a bounded function. For a sake of convenience, we consider a positive function $U \in D^1(t^{1-2s}, \mathbb{R}_{+}^{N+1})$ such that
\begin{equation}\label{eq-U}
\left\{
\begin{array}{ll} \textrm{div}(t^{1-2s} \nabla U)=0&\quad \textrm{in}~\mathbb{R}^{N+1}_{+},
\\
\partial_{\nu}^s U = |V|^{\frac{N+2s}{N-2s}}&\quad \textrm{on}~\mathbb{R}^N \times \{0\}.
\end{array}
\right.
\end{equation}
Then, it is easy to see $|V|\leq U$ by Lemma \ref{lem-maximum} and
\[
\int_{\R^{N+1}_+} t^{1-2s} |\nabla U|^2\, dx dt \leq \int_{\R^{N+1}_+} t^{1-2s} |\nabla V|^2\, dx dt.
\]
For $T>0$ let $U_T = \min \{ U, T\}$. Multiplying \eqref{eq-U} by $U_T^{2\beta} U$ for $\beta >1$ we obtain
\[
\int_{\mathbb{R}^{N}} |V|^{\frac{N+2s}{N-2s}} \cdot U_T^{2\beta}\cdot U (x,0) dx =  \int_{\mathbb{R}^{N+1}_{+}} t^{1-2s} 2\beta|\nabla U_T|^2 U^{2\beta} + t^{1-2s} |\nabla U|^2 U_T^{2\beta} dx dt.
\]
On the other hand, a direct computation shows
\begin{equation}
|\nabla (UU_T^{\beta})|^2 = U_T^{2\beta} |\nabla U|^2 + (2\beta +\beta^2) U_T^{2\beta} |\nabla U_T|^2.
\end{equation}
Thus we deduce
\[
\int_{\mathbb{R}^{N+1}_{+}} t^{1-2s} |\nabla (U U_T^{\beta})|^2 dx dt \leq C \int |V|^{\frac{N+2s}{N-2s}}\cdot U_T^{2\beta} U (x,0) dx,
\]
and consequently, for $K>0$ we have
\[
\begin{split}
\int_{\mathbb{R}^{N+1}_{+}} t^{1-2s} |\nabla (U U_T^{\beta})|^2 &dx dt \leq C
\int_{U \leq K} |V|^{\frac{N+2s}{N-2s}} \cdot U_T^{2\beta} U dx + C \int_{U >K} |V|^{\frac{N+2s}{N-2s}} \cdot U_T^{2\beta} U dx
\\
&\leq K^{2\beta} C + C \left( \int_{U >K} |V|^{\frac{2N}{N-2s}}(x,0) dx \right)^{\frac{2s}{N}} \left( \int_{\mathbb{R}^{N}} |U_T^{\beta} U (x,0)|^{\frac{2N}{N-2s}} dx\right)^{\frac{N-2s}{N}}
\\
&\leq K^{2\beta} C + C \left( \int_{U >K} |V|^{\frac{2N}{N-2s}}(x,0) dx \right)^{\frac{2s}{N}} \left( \int_{\mathbb{R}^{N+1}_{+}} t^{1-2s} |\nabla (U U_T^{\beta})|^2 dx dt\right).
\end{split}
\]
Choosing a sufficiently large $K>0$, we get
\[
\int_{\mathbb{R}^{N+1}_{+}} t^{1-2s} |\nabla (U U_T^{\beta})|^2 dx dt \leq 2 K^{2 \beta} C.
\]
From this, using the Sobolev-trace inequality and letting $T \rightarrow \infty$, we obtain
\[
\int_{\mathbb{R}^{N}} |V|^{2^* (s)(\beta+1)} (x,0) dx \leq \int_{\mathbb{R}^{N}} U^{2^* (s)(\beta+1)} (x,0) dx \leq C.
\]
Here $\beta >1$ can be chosen arbitrary. Now, we use the following kernel expression (see \cite{CS}),
\[U (x,t) = \int_{\R^N} \frac{C_{N,s}}{(|x-y|^2 +t^2)^{\frac{N-2s}{2}}} |V|^{2^{*}(s)-1}(y,0)\,dy\]
and H\"older's inequality to conclude that $U$ is a bounded function. Therefore, $V$ is a bounded function.

Next we consider the following Kelvin transform with $z = (x,t) \in \R^{N+1}_+$,
\begin{equation}\label{eq-kelvin}
W(z) = |z|^{-{(N-2s)}} V\left( \frac{z}{|z|^2}\right).
\end{equation}
From a direct computation, we see that the function $W$ satisfies
\begin{equation*}
\left\{ \begin{array}{ll}
\textrm{div}(t^{1-2s} \nabla W) = 0 &\quad \textrm{in}~\mathbb{R}^{N+1}_{+},
\\
\partial_{\nu}^s W = |W|^{\frac{4s}{N-2s}}W &\quad \textrm {on}~\mathbb{R}^{N} \times \{0\},
\end{array}
\right.
\end{equation*}
and $\| W\|_{D^1(t^{1-2s},\mathbb{R}^{N+1}_{+})} \leq C \| V\|_{D^1(t^{1-2s}, \mathbb{R}^{N+1}_{+})} \leq C$.
Then, we may apply the same argument for $V$ to show that the function $W$ is bounded on $\mathbb{R}^{N+1}_{+}$. So, we can deduce from \eqref{eq-kelvin} that
\begin{equation*}
|V(z)| \leq C |z|^{-(N-2s)}.
\end{equation*}
This proves the lemma.
\end{proof}

\section{Settings and Ideas for the proof of Theorem \ref{thm-uniform-bound}}
Here we build basic settings and expain ideas for the proof of Theorem \ref{thm-uniform-bound} for a clear exposition of the paper.
The arguments introduced in this section are originally developed by Devillanova and Solimini in \cite{DS}
and also are inspired by a modified approach in the work of Cao, Peng and Yan in \cite{CPY}.
From now on, we will denote the norm of the weighted Sobolev space $H^1_0(t^{1-2s},\mathcal{C})$ by $\|\cdot\|$ for simplicity.

Let $\{U_n\}_{n \in \mathbb{N}} \subset H^1_0(t^{1-2s},\mathcal{C})$ be a sequence of functions which are solutions of \eqref{eq-main-q} with $p =p_n \rightarrow 2^*(s)-2$
such that $\|U_n\|$ is bounded uniformly for $n \in \mathbb{N}$.
What we want to prove is the compactness of the sequence $\{U_n\}_{n \in \mathbb{N}}$ in $H^1_0(t^{1-2s},\mathcal{C})$. For this aim, we shall derive a contradiction after assuming that $\{U_n\}_{n \in \mathbb{N}}$ is noncompact. Under this assumption, Lemma \ref{lem-cc-bounded} says that for some integer $k \geq 1$,
there exist $k$ sequences $\{(x_n^j,\, \lambda_n^{j})\}_{n \in \mathbb{N}} \subset \Omega \times \R_+$ with $\lim_{n \rightarrow \infty} \lambda_{n}^j = \infty$ such that
\eqref{eq-cc-cond} holds and
\begin{equation}\label{eq-decom}
\left\{ \begin{array}{l}
U_n = V^0 + \sum_{j=1}^{k} \rho_{n}^j (V^j ) + R_n,
\\
\lim_{n \rightarrow \infty} \| R_n \| = 0,
\end{array}\right.
\end{equation}
where $V^0$ is a solution to \eqref{u0inc} and $V^j$ is an entire solution of \eqref{eq-critical} for $1\leq j \leq k$.
By taking a subsequence, we may assume without loss of generality
\begin{equation*}
\lambda_{n}^1 \leq \lambda_{n}^2 \leq \cdots \leq \lambda_{n}^k \qquad \forall n \in \mathbb{N}.
\end{equation*}
We just denote $\lambda_{n}^1$ by $\lambda_n$ and $x_{n}^1$ by $x_n$ throughout the paper. In other words, the point $x_n$ correponds the slowest bubbling point and $\lambda_n$ is the corresponding rate of blowup.

We shall derive a contradiction by making use a local Pohozaev identity \eqref{eq-local} on concentric balls with center $x_n$ and radii comparable to $\lambda_n^{-1/2}$.
To do this, we shall show that average(and weighted average) integrals of $|U|^q$ on appropriate annuli around $x_n$ are uniform bounded for $n$ whenever $q > 1$.
This will also enable us to get a sharp weighted $L^2$ estimates for $\nabla U$ on the annuli. This will be accomplished in Section 4 and 5.

Let us explain more on the procedure for the uniform estimates. First, we introduce  in Section 4 a norm which reflects the effect of bubbles in sequence $\{U_n\}_{n=1}^{\infty}$ and show the uniform boundedness of $\{U_n\}$ with respect to this norm.
Let $q_1$ and $q_2$ be real numbers such that $\frac{N}{N-2s}< q_2 < \frac{2N}{N-2s} < q_1 < \infty$. For given two functions $u_1 \in L^{p_1}(\Omega)$ and $u_2 \in L^{q_2} (\Omega)$,
let $\alpha>0$ and $\lambda>0$ satisfy the inequality
\begin{equation}\label{eq-inequality-system}
\left\{ \begin{array}{ll} \| u_1 \|_{q_1} &\leq \alpha,
\\
 \|u_2 \|_{q_2} & \leq \alpha \lambda^{\frac{N}{2^*(s)} - \frac{N}{q_2}}.
\end{array}
\right.
\end{equation}
Then we define for given $q_1,\, q_2, \lambda$, a norm as follows:
\begin{eqnarray}\label{eq-def}
\| u\|_{\lambda, q_1,q_2} = \inf\{ \alpha>0 : \textrm{there exist $u_1$ and $u_2$ such that $|u| \leq u_1 + u_2$ and \eqref{eq-inequality-system} holds }\},
\end{eqnarray}
and we shall prove that
\begin{equation*}
\sup_{n \in \mathbb{N}} \|U_n(\cdot,0)\|_{\lambda_n, q_1, q_2} < \infty.
\end{equation*}
In section 5, we establish the uniform boundedness of the average integrals of $|U|^q$ for any $q >1$ and a sharp weighted $L^2$ estimate for $\nabla U$ on suitable annuli around $x_n$ with widths comparable to $\lambda_n^{-1/2}$.
We first show by combining the result in Section 4 and some delicate arguments in the work of Cao-Peng-Yan \cite{CPY} with a nonlocal version of a lemma by Kilpenl\"ainen-Mal\'y \cite{KM}
that the desired average bounds are valid for at least relatively small range of $q$.
Then a Moser's iteration type argument(Lemma \ref{lem-harnack}) applies to widen the range of $q$ to arbitrary $q > 1$.

With these estimates at hand, we make a contradiction from a local Pohozaev identity in Section 6, which completes the proof of Theorem \ref{thm-uniform-bound}.
\

\section{A refined norm estimate}
As explained in Section 3, we prove in this section the following result.
\begin{prop}\label{prop-q1-q2} For $n \in \mathbb{N}$ let $U_n$ be a solution of \eqref{eq-main-q} with $p = p_n \rightarrow 2^{*}(s)-2$ such that $\|U_n \| < C$ for some $C$ independent of $n \in \mathbb{N}$, which admits the decomposition \eqref{eq-decom}. Then, for any numbers $q_1$ and $q_2$ such that $ \frac{N}{N-2s}< q_2 < \frac{2N}{N-2s}< q_1 < \infty$, we have
\begin{equation*}
\sup_{n} \| U_n (\cdot,0) \|_{\lambda_n, q_1, q_2} < \infty.
\end{equation*}
\end{prop}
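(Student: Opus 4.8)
The plan is to exploit the decomposition \eqref{eq-decom} together with the Lemma \ref{lem-trace} bootstrap to control $U_n(\cdot,0)$ in the two Lebesgue norms that define $\|\cdot\|_{\lambda_n,q_1,q_2}$. The key idea is that the definition \eqref{eq-def} only requires us to \emph{split} $|U_n(\cdot,0)| \le u_{1,n} + u_{2,n}$ with $u_{1,n}$ bounded in $L^{q_1}$ and $u_{2,n}$ bounded in $L^{q_2}$ after rescaling by $\lambda_n^{N/2^*(s)-N/q_2}$; the natural candidate for the splitting is dictated by the concentration-compactness decomposition itself: the regular part $V^0$ and slow remainder $R_n$ should feed into the $u_2$-piece (they live at unit scale, so they are bounded in every subcritical–to–critical norm by Sobolev embedding, and in particular the $L^{q_2}$ bound with $q_2 < 2^*(s)$ costs nothing and is compatible with $\lambda_n^{\,\cdot}\to\infty$ since the exponent $\frac{N}{2^*(s)}-\frac{N}{q_2}<0$ only helps), whereas the bubble sum $\sum_j \rho_n^j(V^j)$ carries the concentration and must be placed into the $u_1$-piece after using the decay estimate of Lemma \ref{lem-entire-decay}.

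Concretely, first I would record that, since each $V^j$ solves \eqref{eq-critical} and is bounded by $C(1+|x|)^{-(N-2s)}$ on the trace by Lemma \ref{lem-entire-decay}, each rescaled bubble satisfies $\|\rho_n^j(V^j)(\cdot,0)\|_{q} \le C$ uniformly in $n$ for every $q > \frac{N}{N-2s}$ — this is exactly where the hypothesis $q_1 < \infty$ combined with $q_1 > 2^*(s) > \frac{N}{N-2s}$ is used, because $(1+|x|)^{-(N-2s)q}$ is integrable on $\R^N$ precisely for $q>\frac{N}{N-2s}$, and rescaling $\rho_n^j$ is an $L^q$-isometry only at the critical exponent, so away from it one picks up a power of $\lambda_n^j$ that is bounded above by a negative power when $q>2^*(s)$. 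Hence $\sum_j \rho_n^j(V^j)(\cdot,0)$ is bounded in $L^{q_1}(\Omega)$. Then I would set $u_{1,n}:=\sum_j \rho_n^j(V^j)(\cdot,0)$ and $u_{2,n}:=|V^0(\cdot,0)|+|R_n(\cdot,0)|$; the $L^{q_2}$ bound for $u_{2,n}$ follows from the Sobolev trace inequality \eqref{eq-sharp-trace} ($q_2<2^*(s)$ and $\Omega$ bounded), which gives $\|u_{2,n}\|_{q_2}\le C(\|V^0\|+\sup_n\|R_n\|)\le C$, and this is $\le C \le \lambda_n^{N/2^*(s)-N/q_2}\cdot(C\lambda_n^{N/q_2-N/2^*(s)})$ — but since the exponent is negative and $\lambda_n\to\infty$ we must be slightly careful: the bound required in \eqref{eq-inequality-system} for $u_2$ is an \emph{upper} bound by $\alpha\lambda_n^{N/2^*(s)-N/q_2}\to 0$, so a naive placement fails. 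The fix is to also push a decaying tail of the bubbles into $u_2$: one absorbs into $u_{2,n}$ the part of each bubble outside a ball of radius $\sim (\lambda_n^j)^{-1/2}$ (or rather, one uses that $V^0$ and $R_n$ can themselves be re-split, choosing $u_{2,n}$ to be the full $|U_n(\cdot,0)|\cdot\mathbf 1_{\{|x-x_n|>\delta\}}$-type far-field piece which \emph{does} decay like $\lambda_n^{-(N-2s)/2}$ and therefore beats $\lambda_n^{N/2^*(s)-N/q_2}=\lambda_n^{-(N-2s)(\frac12-\frac{1}{q_2}\cdot\frac{N-2s}{N}\cdot\frac{N}{N-2s})}$ once $q_2$ is taken close enough to $\frac{N}{N-2s}$). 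I expect this balancing — verifying that the far-field of the bubbling profile decays at a rate strictly faster than $\lambda_n^{N/2^*(s)-N/q_2}$, uniformly, using the ordering $\lambda_n=\lambda_n^1\le\cdots\le\lambda_n^k$ and \eqref{eq-cc-cond} — to be the genuine technical heart of the argument.

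Thus the steps, in order: (i) bound each rescaled bubble $\rho_n^j(V^j)(\cdot,0)$ in $L^{q_1}$ using Lemma \ref{lem-entire-decay} and the fact $q_1>2^*(s)$; (ii) bound $V^0(\cdot,0)$ and $R_n(\cdot,0)$ in every $L^q$ with $q\le 2^*(s)$ via the trace inequality \eqref{eq-sharp-trace}, using $\sup_n\|R_n\|<\infty$; (iii) construct the two-piece splitting $|U_n(\cdot,0)|\le u_{1,n}+u_{2,n}$ so that the near-field (bubbles) goes into $u_{1,n}\in L^{q_1}$ and the far-field goes into $u_{2,n}$, verifying the scaling inequality $\|u_{2,n}\|_{q_2}\le C\lambda_n^{N/2^*(s)-N/q_2}$ by comparing the far-field decay exponent $-(N-2s)/2$ with $N/2^*(s)-N/q_2$ and shrinking $q_2\downarrow\frac{N}{N-2s}$ if necessary (the norm $\|\cdot\|_{\lambda_n,q_1,q_2}$ is monotone in $q_2$ in the right direction for this to suffice); (iv) conclude $\|U_n(\cdot,0)\|_{\lambda_n,q_1,q_2}\le C(1+\|u_{1,n}\|_{q_1})<\infty$ uniformly. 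The main obstacle, as indicated, is step (iii): making the far-field/near-field cut at the correct scale and showing the scaling inequality holds for the far-field, which is where the structure of Lemma \ref{lem-cc-bounded} (in particular the blow-up ordering and the separation condition \eqref{eq-cc-cond}) must be invoked carefully rather than treated as a black box.
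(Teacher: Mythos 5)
Your proposed splitting is oriented backwards, and the sign error occurs exactly where you claim the bubbles are harmless. A rescaled bubble satisfies
\[
\|\rho_n^j(V^j)(\cdot,0)\|_{L^{q}(\Omega)} \;\le\; (\lambda_n^j)^{\frac{N}{2^*(s)}-\frac{N}{q}}\,\|V^j(\cdot,0)\|_{L^{q}(\R^N)},
\]
so for $q=q_1>2^*(s)$ the exponent $\frac{N}{2^*(s)}-\frac{N}{q_1}$ is \emph{positive} and the $L^{q_1}$ norm blows up like a positive power of $\lambda_n^j\to\infty$; the bubbles can never be placed in the $u_1$-slot. Conversely, for $q=q_2\in(\tfrac{N}{N-2s},2^*(s))$ Lemma \ref{lem-entire-decay} gives $V^j(\cdot,0)\in L^{q_2}(\R^N)$ and, since $\lambda_n^j\ge\lambda_n$ and the exponent is negative, the bound is exactly $\le C\lambda_n^{\frac{N}{2^*(s)}-\frac{N}{q_2}}$: the $u_2$-slot with its $\lambda$-weight is designed precisely to hold the bubbles, while the $O(1)$ parts must go into $u_1$. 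Your mid-proof ``fix'' does not repair this, because the far field of $U_n$ contains $V^0$, which is of unit size independent of $n$, not $O(\lambda_n^{-(N-2s)/2})$; and even with the correct orientation a direct pointwise splitting of \eqref{eq-decom} fails on two counts: $R_n$ is only $o(1)$ in $H^1_0$, hence in $L^{2^*(s)}(\Omega)$ by the trace inequality, and carries no uniform $L^{q_1}$ bound for any $q_1>2^*(s)$; and nothing in the decomposition alone produces $L^{q_1}$ bounds for arbitrarily large $q_1<\infty$.

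The paper's proof avoids splitting $U_n$ itself. It introduces a majorant $D_n\ge |U_n|$ solving the linear problem \eqref{eq-uv} with Neumann data $2|U_n|^{2^*(s)-1}+A$, and splits the \emph{data} according to \eqref{eq-d-decom} into a $V^0$-driven piece, a bubble-driven piece, and an $R_n$-driven piece. The first two are estimated through Lemma \ref{lem-trace}, whose gain of $2s$ in integrability (from data in $L^p$ to trace in $L^q$ with $\frac{N}{q}\le\frac{N}{p}-2s$) is what eventually produces exponents above $2^*(s)$; the $R_n$-driven piece is not bounded directly but \emph{absorbed}, via Lemma \ref{lem-harnack-basic} and the smallness $\||R_n|^{2^*(s)-2}\|_{L^{N/2s}}=o(1)$, into the left-hand side of the inequality for $\|D_n\|_{\lambda_n,q_1,q_2}$. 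This yields one admissible pair $(q_1,q_2)$ (Lemma \ref{lem-start}), and the full range of exponents is then reached by iterating the bootstrap of Lemma \ref{lem-boot}, which again exploits the regularity gain of the equation. These two mechanisms --- absorption of the remainder and iteration through the equation --- are the substance of the proof, and neither appears in your proposal.
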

We will prove this result through the three lemmas below, proofs of which heavily rely on Lemma \ref{lem-trace}, \ref{lem-maximum} and \ref{lem-entire-decay}.
Let us take a constant $A>0$ such that $x^{p+1} + \mu x \leq 2 x^{2^*(s) -1} + A$ for all $x \geq 0$ and $1< p< 2^{*}(s)-2$. Now we consider a solution $\{ D_n\}_{n \in \mathbb{N}}$ to the problem
\begin{equation}\label{eq-uv}
\left\{ \begin{array}{ll} \textrm{div}(t^{1-2s} \nabla D_n) = 0&\quad \textrm{in}~\mathcal{C},
\\
D_n = 0&\quad \textrm{on}~\partial_{L}\mathcal{C},
\\
\partial_{\nu}^s D_n = 2 |U_n|^{2^{*}(s)-1} + A& \quad \textrm{on}~\Omega \times \{0\}.
\end{array}
\right.
\end{equation}
Then, by Lemma \ref{lem-maximum}, we see that $D_n$ is positive and $|U_n| \leq D_n$.
Moreover, using \eqref{eq-decom} for some $C_1 >0$ we see that for some $C_1 = C_1 (k)$ the following inequality holds;
\begin{equation}\label{eq-d-decom}
\partial_{\nu}^{s} D_n \leq C_1 \left( |V_0|^{2^* (s)-2} + \sum_{j=1}^{k} |\rho_n^j (V_j)|^{2^* (s)-2} + |R_n|^{2^* (s) -2} \right) |U_n| + A \quad \textrm{on}~\Omega \times \{0\}.
\end{equation}

We prepare the first lemma, which will be used to handle the remainder term $R_n$ converging to zero in $H_0^1(t^{1-2s}, \mathcal{C})$.
\begin{lem}\label{lem-harnack-basic}
Let $ a \in L^{\frac{N}{2s}}(\Omega)$ and $v \in L^{\infty}(\Omega)$. Suppose a function $U \in H_0^1(t^{1-2s}, \mathcal{C})$ satisfies
 \begin{equation*}
\left\{ \begin{array}{ll} \textrm{\em div}(t^{1-2s} \nabla U)=0 &\quad \textrm{in}~\mathcal{C},
\\
U=0&\quad \textrm{on}~\partial_L \mathcal{C},
\\
\partial_{\nu}^{s} U = a(x) v &\quad \textrm{on}~\Omega \times \{0\}.
\end{array}
\right.
\end{equation*}
Then, for any $\lambda >0$ and $\frac{N}{N-2s} <q_1 < \frac{2N}{N-2s} < q_2 < \infty$ we have
\begin{equation*}
\| U(\cdot,0)\|_{\lambda, q_1, q_2} \leq C_{q_1,q_2} \| a\|_{\frac{N}{2s}} \| v\|_{\lambda, q_1,q_2}.
\end{equation*}
\end{lem}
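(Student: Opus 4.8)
The plan is to follow the same testing strategy that appears in the proof of Lemma~\ref{lem-trace}, but now keeping track of the scale $\lambda$ through the defining inequalities \eqref{eq-inequality-system}. First I would unwind the definition of $\|v\|_{\lambda,q_1,q_2}$: given $\delta>0$ there are decompositions $|v|\le v_1+v_2$ with $\|v_1\|_{q_1}\le \|v\|_{\lambda,q_1,q_2}+\delta =: \beta$ and $\|v_2\|_{q_2}\le \beta\lambda^{\frac{N}{2^*(s)}-\frac{N}{q_2}}$. By linearity of the problem, split $U = U^{(1)} + U^{(2)}$ where $U^{(i)}$ solves \eqref{eq-lem-basic} with right-hand side $\partial_\nu^s U^{(i)} = a v_i$ on $\Omega\times\{0\}$; by Lemma~\ref{lem-maximum} (applied to $\pm U^{(i)}$ against the solution with data $|a||v_i|$) one has $|U^{(i)}(\cdot,0)|\le \widetilde U^{(i)}(\cdot,0)$ for the solution $\widetilde U^{(i)}$ with nonnegative data $|a v_i|$, so it suffices to estimate the $L^{q_1}$ and $L^{q_2}$ norms of $U^{(1)}(\cdot,0)$ and $U^{(2)}(\cdot,0)$ respectively and then use $|U|\le |U^{(1)}|+|U^{(2)}|$ to feed back into \eqref{eq-def}.

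Next I would apply Lemma~\ref{lem-trace} to each piece. For $U^{(1)}$: by H\"older, $\|a v_1\|_{p}\le \|a\|_{N/2s}\|v_1\|_{q_1}$ provided $\frac1p = \frac{2s}{N}+\frac{1}{q_1}$; since $q_1>\frac{2N}{N-2s}$ one checks $p<\frac{N}{2s}$, and the exponent relation $\frac{N}{q_1}=\frac{N}{p}-2s$ holds by construction, so Lemma~\ref{lem-trace} gives $\|U^{(1)}(\cdot,0)\|_{q_1}\le C\|a\|_{N/2s}\|v_1\|_{q_1}\le C\|a\|_{N/2s}\beta$. For $U^{(2)}$: the same computation with $\frac{1}{\tilde p}=\frac{2s}{N}+\frac{1}{q_2}$ and target exponent $q_2$ (which still satisfies $\frac{N}{q_2}\le\frac{N}{\tilde p}-2s$, with equality) yields $\|U^{(2)}(\cdot,0)\|_{q_2}\le C\|a\|_{N/2s}\|v_2\|_{q_2}\le C\|a\|_{N/2s}\beta\lambda^{\frac{N}{2^*(s)}-\frac{N}{q_2}}$. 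Now set $\alpha := C_{q_1,q_2}\|a\|_{N/2s}\beta$ with $U_1 := \widetilde U^{(1)}(\cdot,0)$, $U_2 := \widetilde U^{(2)}(\cdot,0)$; these satisfy $|U(\cdot,0)|\le U_1+U_2$ and exactly the pair of inequalities \eqref{eq-inequality-system}, so by the definition \eqref{eq-def}, $\|U(\cdot,0)\|_{\lambda,q_1,q_2}\le \alpha = C_{q_1,q_2}\|a\|_{N/2s}(\|v\|_{\lambda,q_1,q_2}+\delta)$. Letting $\delta\to 0$ finishes the proof.

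The only mild subtlety—and the step I would be most careful with—is confirming that the Lebesgue exponents line up so that Lemma~\ref{lem-trace} is applicable on \emph{both} pieces simultaneously: one must verify $\frac{N}{N-2s}<q_2<\frac{2N}{N-2s}<q_1<\infty$ forces the dual integrability exponents $p,\tilde p$ into the admissible range $(1,\frac{N}{2s})$ required by Lemma~\ref{lem-trace}, and that the scaling power $\frac{N}{2^*(s)}-\frac{N}{q_2}$ in \eqref{eq-inequality-system} is reproduced \emph{verbatim} by the estimate on $U^{(2)}$ rather than merely up to a harmless constant (it is, since the constant gets absorbed into $\alpha$). A secondary point is that the decomposition into $U^{(1)}+U^{(2)}$ and the comparison $|U^{(i)}|\le \widetilde U^{(i)}$ both require $a$ and $v_i$ to have enough integrability for the auxiliary problems to be well-posed in $H^1_0(t^{1-2s},\mathcal{C})$; this is guaranteed by $a\in L^{N/2s}(\Omega)$, $v\in L^\infty(\Omega)$ and the finiteness of $\|v\|_{\lambda,q_1,q_2}$ (which is automatic since $v\in L^\infty$ on a bounded domain embeds into both $L^{q_1}$ and $L^{q_2}$). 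No genuinely hard estimate is involved; the lemma is essentially bookkeeping on top of Lemma~\ref{lem-trace} and Lemma~\ref{lem-maximum}.
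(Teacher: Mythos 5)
Your proposal is essentially the paper's own argument: the paper likewise takes a near-optimal decomposition $|v|\le v_1+v_2$, compares $|U|$ with positive auxiliary solutions having Neumann data $|a|v_i$ via Lemma \ref{lem-maximum}, derives $\|U_i(\cdot,0)\|_{L^{q_i}}\le C\|a\|_{N/2s}\|v_i\|_{L^{q_i}}$ (by the same testing computation that proves Lemma \ref{lem-trace}, with the extra H\"older factor $\|a\|_{N/2s}$), and feeds the result back into the definition \eqref{eq-def}; your shortcut of quoting Lemma \ref{lem-trace} after one application of H\"older is the same estimate packaged differently, and the exponent bookkeeping you carry out is correct.

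One step is stated incorrectly, though it is harmless: you cannot ``split $U=U^{(1)}+U^{(2)}$ with $\partial_\nu^s U^{(i)}=a\,v_i$,'' because only $|v|\le v_1+v_2$ is known, not $v=v_1+v_2$, so $a v_1+a v_2\neq a v$ in general and the inequality $|U|\le |U^{(1)}|+|U^{(2)}|$ is not justified as written. The correct route (and the paper's) is to compare $U$ directly with $\widetilde U^{(1)}+\widetilde U^{(2)}$, where $\widetilde U^{(i)}$ has data $|a|v_i\ge 0$: by linearity this sum solves the problem with data $|a|(v_1+v_2)\ge |av|\ge \pm a v$, so Lemma \ref{lem-maximum} applied to $\widetilde U^{(1)}+\widetilde U^{(2)}\mp U$ gives $|U|\le \widetilde U^{(1)}+\widetilde U^{(2)}$, after which your estimates for $\widetilde U^{(i)}(\cdot,0)$ in $L^{q_i}$ and the conclusion via \eqref{eq-def} go through verbatim. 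A final cosmetic remark: for the subcritical exponent your use of Lemma \ref{lem-trace} corresponds to the case $\beta<1$ of its proof (the paper's inline proof of the present lemma has the same feature), and the indices $q_1,q_2$ in the lemma's statement are swapped relative to the convention of \eqref{eq-inequality-system}; you correctly followed the convention of Section 3.
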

\begin{proof}
Choose arbitrary positive two functions $v_1  \in L^{\infty}(\Omega)$ and $v_2  \in L^{\infty} (\Omega)$ such that $|v(x)| \leq v_1 (x) + v_2 (x)$ for all $x \in \Omega$.
Then, there exist functions $U_1 \in H_0^1(t^{1-2s}, \mathcal{C})$ and $U_2  \in H_0^1(t^{1-2s}, \mathcal{C})$ satisfying
\begin{equation*}
\left\{ \begin{array}{ll} \textrm{div}(t^{1-2s} \nabla U_i) = 0 &\quad \textrm{in}~\mathcal{C},
\\
U_i =0 &\quad \textrm{on}~\partial_{L}\mathcal{C},
\\
\partial_{\nu}^{s} U_i = |a(x)| v_i &\quad \textrm{on}~\Omega \times \{0\},
\end{array}
\right. \quad i =1,2.
\end{equation*}
We see from Lemma \ref{lem-maximum}, the maximum principle that $|U| \leq U_1 + U_2$ . For given $\beta >1$, one has
\begin{equation*}
0 = \int_{\mathcal{C}} \textrm{div}(t^{1-2s} \nabla U_i ) U_i^{\beta} dz = \int_{\Omega \times \{0\}}|a(x)|v_i (x) U_i^{\beta}(x,0) dx - \int_{\mathcal{C}} t^{1-2s} \nabla U_i \nabla U_i^{\beta} dz,
\end{equation*}
which gives
\begin{equation*}
\int_{\mathcal{C}} t^{1-2s} |\nabla U_i^{\frac{\beta+1}{2}}|^2 dz = C_{\beta} \int_{\Omega \times \{0\}} a (x) v_i (x) U_i^{\beta} (x,0) dx.
\end{equation*}
Applying the Sobolev-trace inequality \eqref{eq-sharp-trace} and H\"older's inequality, we get
\begin{equation}\label{eq-uav}
\| U_i^{\frac{\beta+1}{2}} (x,0)\|_{L^{\frac{2N}{N-2s}}(\Omega)}^2 \leq C \| a\|_{\frac{N}{2s}} \| v_i \|_{\frac{\beta+1}{2} \frac{2N}{N-2s}} \| U_i^{\beta} (x,0)|\|_{L^{\frac{\beta+1}{2\beta} \frac{2N}{N-2s}}}.
\end{equation}
For each $i \in \{1,2\}$ we take the value of $\beta$ such that $q_i = \frac{\beta+1}{2} \frac{2N}{N-2s}$. Then \eqref{eq-uav} gives that
\begin{equation*}
\| U_i (x,0)\|_{L^{q_i}} \leq C \| a\|_{\frac{N}{2s}} \| v_i \|_{L^{q_i}}\quad \forall i =1,2.
\end{equation*}
This and the definition \eqref{eq-def} of $\| \cdot~\|_{\lambda,q_1,q_2}$ yield
\[
\left\| U (\cdot,0)\right\|_{\lambda, q_1, q_2} \leq C \|a\|_{N/2s} \| v\|_{\lambda,q_1,q_2}.
\]
This proves the lemma.
\end{proof}
In the following lemma, we find a particular pair $(q_1, q_2)$ such that $\|~\|_{\lambda_n,q_1,q_2}$ is uniformly bounded.
\begin{lem}\label{lem-start}
Let $\{U_n\}_{n \in \mathbb{N}}$ be the sequence of solutions described in Proposition \ref{prop-q1-q2} and consider the sequence of functions $\{D_n\}_{n \in \mathbb{N}}$ defined in \eqref{eq-uv}. Then, there exists $q_1 \in \left( \frac{2N}{N-2s}, \infty\right)$ and $q_2 \in \left( \frac{N}{N-2s}, \frac{2N}{N-2s}\right)$, and a constant  $C>0$ such that
\begin{equation*}
\sup_{n \in \mathbb{N}}\| D_n (\cdot,0)\|_{\rho_n,q_1,q_2} \leq C.
\end{equation*}
\end{lem}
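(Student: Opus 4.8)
The plan is to fix exponents $q_1\in\big(2^*(s),\,\tfrac{2N}{N-6s}\big]$ and $q_2\in\big(\tfrac{N}{N-4s},\,2^*(s)\big)$ --- both intervals being non-empty exactly because $N>6s$, and admissible for $\|\cdot\|_{\lambda,q_1,q_2}$ since $\tfrac{N}{N-4s}>\tfrac{N}{N-2s}$ --- and to close an absorption estimate for $M_n:=\|D_n(\cdot,0)\|_{\rho_n,q_1,q_2}$. Each $U_n$ solves the \emph{subcritical} problem \eqref{eq-main-q}, so a finite Lemma \ref{lem-trace} bootstrap (followed by the Poisson-kernel estimate used in the proof of Lemma \ref{lem-entire-decay}) gives $U_n(\cdot,0),D_n(\cdot,0)\in L^\infty(\Omega)$; as $\Omega$ is bounded, $M_n<\infty$ for each fixed $n$. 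It therefore suffices to prove $M_n\le C+\tfrac12 M_n$ for all large $n$ with $C$ independent of $n$.

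From \eqref{eq-d-decom} --- where the subadditivity $(a+b+c)^{2^*(s)-2}\le a^{2^*(s)-2}+b^{2^*(s)-2}+c^{2^*(s)-2}$, i.e. $N>6s$, is already used --- one has $\partial_\nu^s D_n\le C_1 a_n|U_n|+A$ with $a_n=|V^0|^{2^*(s)-2}+\sum_{j=1}^k|\rho_n^j(V^j)|^{2^*(s)-2}+|R_n|^{2^*(s)-2}$, together with $|U_n|\le D_n$ and $|U_n|\le |V^0|+\sum_j|\rho_n^j(V^j)|+|R_n|$. Using the linearity of the extended equation and the maximum principle (Lemma \ref{lem-maximum}), I would decompose $D_n$ into non-negative pieces corresponding to the splitting of $a_n$ via $|V^0|^{2^*(s)-2}=|V^0|^{2^*(s)-2}\mathbf 1_{\{|V^0|\le K\}}+|V^0|^{2^*(s)-2}\mathbf 1_{\{|V^0|>K\}}$ and $|\rho_n^j(V^j)|^{2^*(s)-2}=|\rho_n^j(V^j)|^{2^*(s)-2}\mathbf 1_{B_{R/\lambda_n^j}(x_n^j)}+|\rho_n^j(V^j)|^{2^*(s)-2}\mathbf 1_{\Omega\setminus B_{R/\lambda_n^j}(x_n^j)}$, with $K,R$ large. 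The coefficients $|V^0|^{2^*(s)-2}\mathbf 1_{\{|V^0|>K\}}$, $|\rho_n^j(V^j)|^{2^*(s)-2}\mathbf 1_{\Omega\setminus B_{R/\lambda_n^j}(x_n^j)}$ and $|R_n|^{2^*(s)-2}$ have $L^{N/2s}(\Omega)$-norms equal to $\big(\int_{\{|V^0|>K\}}|V^0|^{2^*(s)}\big)^{2s/N}$, $\big(\int_{|y|>R}|V^j(y,0)|^{2^*(s)}dy\big)^{2s/N}$ and $\|R_n(\cdot,0)\|_{2^*(s)}^{2^*(s)-2}$, tending to $0$ as $K\to\infty$ (as $V^0\in L^{2^*(s)}$), $R\to\infty$ (by Lemma \ref{lem-entire-decay}) and $n\to\infty$; paired with $|U_n|\le D_n$ and fed into Lemma \ref{lem-harnack-basic} (with $v=D_n$), these pieces are bounded in $\|\cdot\|_{\rho_n,q_1,q_2}$ by $C[\delta(K)+\delta(R)+o_n(1)]M_n$. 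For $|V^0|^{2^*(s)-2}\mathbf 1_{\{|V^0|\le K\}}\le K^{2^*(s)-2}$ I would instead expand $|U_n|\le |V^0|+\sum_j|\rho_n^j(V^j)|+|R_n|$ and apply Lemma \ref{lem-trace}: with $p=2^*(s)$ for the $V^0$- and $R_n$-terms (then $L^{2N/(N-6s)}(\Omega)\hookrightarrow L^{q_1}(\Omega)$), and with $p$ slightly below $\tfrac{Nq_2}{N+2sq_2}$ --- which lies in $(\tfrac{N}{N-2s},q_2)$ exactly because $q_2>\tfrac{N}{N-4s}$, so $V^j\in L^p(\mathbb{R}^N)$ by Lemma \ref{lem-entire-decay} --- for the bubble-terms, where $\lambda_n^j\ge\lambda_n$ and $p<q_2<2^*(s)$ let one absorb $(\lambda_n^j)^{N/2^*(s)-N/p}$ into $\lambda_n^{N/2^*(s)-N/q_2}$. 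These pieces, and the one produced by $A$, are bounded in $\|\cdot\|_{\rho_n,q_1,q_2}$ by a constant $C(K)$ independent of $n$.

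The main obstacle I anticipate is the bubble cores $|\rho_n^j(V^j)|^{2^*(s)-2}\mathbf 1_{B_{R/\lambda_n^j}(x_n^j)}|U_n|$: the coefficient is scale-invariant, its $L^{N/2s}$-norm is of unit size and cannot be made small, and the whole point of the weighted norm (and of $N>6s$, now entering through $2^*(s)<N/2s$) is to convert the concentration of these cores at scale $\lambda_n^{-1}\le(\lambda_n^j)^{-1}$ into a vanishing factor. Lemma \ref{lem-entire-decay} gives $|\rho_n^j(V^j)|^{2^*(s)-2}\le C(\lambda_n^j)^{2s}$, so the corresponding piece $W$ of $D_n$ satisfies $\partial_\nu^s W\le C(\lambda_n^j)^{2s}\mathbf 1_{B_{R/\lambda_n^j}(x_n^j)}D_n$; taking a near-optimal split $D_n\le u_1+u_2$ with $\|u_1\|_{q_1}\le 2M_n$ and $\|u_2\|_{q_2}\le 2M_n\lambda_n^{N/2^*(s)-N/q_2}$, Hölder's inequality over the ball of radius $R/\lambda_n^j$ bounds $\|\mathbf 1_{B_{R/\lambda_n^j}(x_n^j)}D_n\|_{L^p}$ by a sum of two terms with factors $(\lambda_n^j)^{-N(1/p-1/q_1)}$ and $\lambda_n^{N/2^*(s)-N/q_2}(\lambda_n^j)^{-N(1/p-1/q_2)}$. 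Applying Lemma \ref{lem-trace} with exponents $p\to q_2$, $p$ just below $\tfrac{Nq_2}{N+2sq_2}$ (so that $2s-N(1/p-1/q_2)<0$, and also $2s-N(1/p-1/q_1)<N/2^*(s)-N/q_2$ since $q_1>2^*(s)$), and using $\lambda_n^j\ge\lambda_n\to\infty$ to turn powers of $\lambda_n^j$ into powers of $\lambda_n$, yields $\|W(\cdot,0)\|_{\rho_n,q_1,q_2}\le C_R\,o_n(1)\,M_n$ for each fixed $R$; summing over the finitely many bubbles gives a total bubble-core contribution $\le C_R\,o_n(1)\,M_n$.

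Collecting these, $M_n\le C(K)+C[\delta(K)+\delta(R)]M_n+\ep_n(R)M_n$ with $\ep_n(R)\to0$ as $n\to\infty$ for each fixed $R$. I would first fix $K$ with $C\delta(K)<\tfrac18$, then $R$ with $C\delta(R)<\tfrac18$, then $n_0$ with $\ep_n(R)<\tfrac14$ for $n\ge n_0$; this gives $M_n\le\tfrac12 M_n+C(K)$, hence $M_n\le 2C(K)$, for all $n\ge n_0$, while the finitely many remaining $M_n$ are finite. Therefore $\sup_n M_n<\infty$.
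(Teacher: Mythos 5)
Your proposal is correct, but it reaches the conclusion by a genuinely different treatment of the key bubble term than the paper. The paper keeps each coefficient $|\rho_n^j(V^j)|^{2^*(s)-2}$ intact: by Lemma \ref{lem-entire-decay}, $|V^j(\cdot,0)|^{2^*(s)-2}\in L^r(\R^N)$ for every $r>\frac{N}{4s}$, and scaling gives $\bigl\||\rho_n^j(V^j)|^{2^*(s)-2}(\cdot,0)\bigr\|_{L^r}\le C\lambda_n^{2s-N/r}$; pairing this with the uniform bound $\|U_n(\cdot,0)\|_{L^{2^*(s)}}\le C$ through H\"older and Lemma \ref{lem-trace} puts the whole bubble contribution into the $q_2$-slot with growth $\lambda_n^{2s-N/r}=\lambda_n^{N/2^*(s)-N/q_2}$, i.e.\ exactly the growth the weighted norm allows (with $r$ near $\frac{N}{4s}$, hence $q_2$ near $\frac{2N}{N+2s}$, and $q_1=\frac{2N}{N-6s}$ coming from the $V^0$-piece). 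So in the paper no core/tail splitting and no absorption are needed for the bubbles; absorption via Lemma \ref{lem-harnack-basic} is used only for the $R_n$-piece, just as in your last term. You instead split each bubble coefficient into a core on $B_{R/\lambda_n^j}(x_n^j)$ and a tail, treat the tail (together with the set $\{|V^0|>K\}$ and $R_n$) by smallness of the $L^{N/2s}$-norm, and convert the core concentration into negative powers of $\lambda_n^j$ by H\"older on shrinking balls applied to a near-optimal split of $D_n$ — closer to the original Devillanova--Solimini/Cao--Peng--Yan scheme. Your exponent bookkeeping is sound: $q_2>\frac{N}{N-4s}$ is precisely what makes $V^j(\cdot,0)\in L^p$ for $p$ just below $\frac{Nq_2}{N+2sq_2}$, and $q_1>2^*(s)$ is what makes the $u_1$-part of the core beat the permitted $\lambda_n^{N/2^*(s)-N/q_2}$ growth. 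What each approach buys: the paper's computation is much shorter and produces an explicit admissible pair $(q_1,q_2)$ with no $K$--$R$--$n$ ordering of parameters; your version avoids invoking boundedness of $V^0$ (which the paper asserts without proof — you only need $V^0(\cdot,0)\in L^{2^*(s)}$) and makes explicit the finiteness of $M_n$ that legitimizes the absorption, a point the paper leaves implicit. One cosmetic remark: the $L^\infty$ bound for $D_n(\cdot,0)$ via a ``Poisson kernel'' is stated for the half-space, not the cylinder, but you do not actually need $L^\infty$ — the subcritical bootstrap with Lemma \ref{lem-trace} already gives $D_n(\cdot,0)\in L^{q_1}(\Omega)$ for each fixed $n$, which is all that finiteness of $M_n$ (and the application of Lemma \ref{lem-harnack-basic}, whose proof only uses finiteness of the norms involved) requires.
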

\begin{proof}
For $1 \leq i \leq 3$ we consider the functions $D_n^i  \in H^1_0(t^{1-2s},\mathcal{C})$ such that
\begin{equation*}
\left\{\begin{split}
& \textrm{div}(t^{1-2s} \nabla D_i ) =0\quad \textrm{in}~\mathcal{C}, \quad 1 \leq i \leq 3,
\\
&D_i = 0\quad \textrm{on}~\partial_L \mathcal{C}, \quad 1 \leq i \leq 3,
\\
&\partial_{\nu}^s D_n^1 = C_1 (|V_0|^{2^* (s) -2}) |U_n| + A,
\\
&\partial_{\nu}^{s} D_n^2 = C_1 (\sum_{j=1}^{k} |\rho_n^j (V_j)|^{2^* (s) -2}) |U_n|,
\\
&\partial_{\nu}^{s} D_n^3 = C_1 (|R_n|^{2^* (s) -2})|U_n|.
\end{split}
\right.
\end{equation*}
Then, from \eqref{eq-d-decom} we have $|D_n| \leq D_n^1 + D_n^2 + D_n^3$ by the maximum principle.
Because $\|U_n\|$ is uniformly bounded for $n \in \mathbb{N}$, the Sobolev-trace inequality gives
\begin{equation*}
\sup_n \|U_n (\cdot,0)\|_{L^{2^* (s)}(\Omega)} \leq C \sup_n \| U_n \| \leq C.
\end{equation*}
Since $V^0$ is a bounded, applying Lemma \ref{lem-trace} we have
\begin{equation}\label{eq-d-1}
\| D_n^1 (\cdot,0)\|_{L^{q_1}} \leq C \| U_n (\cdot,0) \|_{L^{2^* (s)}(\Omega)},
\end{equation}
where $q_1$ satisfies $\frac{1}{2^* (s)} -\frac{1}{q_1} = \frac{2s}{N}$. For $1 \leq j \leq k$ we see from Lemma \ref{lem-entire-decay} that $|V_j (\cdot,0)|^{p_n -1} \in L^r$ for any number $r>\frac{N}{4s}$. Hence, we may calculate to see that
\begin{equation*}
\bigl\| \rho_{n}^{j}  (V_j )^{p_n -1}(\cdot,0) \bigr\|_{L^{r}} \leq \lambda_n^{2s -\frac{N}{r}}.
\end{equation*}
Using this we get
\begin{equation}\label{eq-d-2}
\begin{split}
\| D_n^2  (\cdot,0) \|_{L^{q_2}} &\leq C \biggl\| \sum_{j=1}^{k} |\rho_n^j (V^j)^{2^* (s) -2}(\cdot,0)|\biggr\|_{L^{r}} \| U_n (\cdot,0)\|_{L^{2^* (s)}(\Omega)}
\\
& \leq C \lambda_n^{2s- \frac{N}{r}},
\end{split}
\end{equation}
where $q_2$ is such that $N \left( \frac{1}{r} + \frac{N-2s}{2N} - \frac{1}{q_2}\right) = 2s.$ We note that $2s - \frac{N}{r} =\frac{N-2s}{2} - \frac{N}{q_2}$, and it is easy to check that $\frac{N}{N-2s} < q_2 < \frac{2N}{N-2s}$ for $r$ sufficiently close to $\frac{N}{4s}$.
In view of the definition \eqref{eq-def}, the estimates \eqref{eq-d-2} and \eqref{eq-d-1} imply
\begin{equation}\label{eq-d1d2}
\| D_n^1(\cdot,0) \|_{\lambda_n, q_1, q_2} + \| D_n^2 (\cdot,0) \|_{\lambda_n, q_1, q_2} \leq C.
\end{equation}
On the other hand, since $\| R_n \| = o(1)$ we have $\| R_n^{2^*(s) -2} (\cdot,0) \|_{L^{\frac{N}{2s}}(\Omega)} = \|R_n (\cdot,0)\|_{L^{\frac{2N}{N-2s}}(\Omega)}^{\frac{4s}{N-2s}} = o (1).$ Thus, applying  Lemma \ref{lem-harnack-basic} we get
\begin{equation}\label{eq-d3}
\| D_n^3 (\cdot,0) \|_{\lambda_n, q_1, q_2} \leq o (1) \| D_n (\cdot,0) \|_{\lambda_n, q_1, q_2}.
\end{equation}
Combining \eqref{eq-d1d2} and \eqref{eq-d3} we have
\begin{equation*}
\begin{split}
\| D_n (\cdot,0)\|_{\lambda_n, q_1, q_2} & \leq \| D_n^1 (\cdot,0) \|_{\lambda_n, q_1, q_2}+\| D_n^2 (\cdot,0) \|_{\lambda_n, q_1, q_2}+\| D_n^3 (\cdot,0) \|_{\lambda_n, q_1, q_2}
\\
&\leq C + o(1) \| D_n (\cdot,0)\|_{\lambda_n, q_1, q_2},
\end{split}
\end{equation*}
which gives $\| D_n (\cdot,0) \|_{\lambda_n, q_1, q_2} \leq C$ for a constant $C>0$ independent of $n \in \mathbb{N}$. This completes the proof.
\end{proof}
The next lemma is for a bootstrap argument.
\begin{lem}\label{lem-boot}
Consider two numbers $q_1$ and $q_2$ such that $\frac{N+2s}{N-2s} < q_2 < \frac{2N}{N-2s} <q_1 < \frac{N}{2s}\frac{N+2s}{N-2s}$. Let $\gamma_1$ and $\gamma_2$ satisfy
\begin{equation*}
\frac{1}{\gamma_i} = \frac{N+2s}{N-2s} \frac{1}{q_i} -\frac{2s}{N}, ~ i=1,2.
\end{equation*}
Assume that for some $v \in L^{q_2}(\Omega)$, $U \in H^1_0(t^{1-2s},\mathcal{C})$ solves
\begin{equation*}
\left\{\begin{split}
\textrm{\em div}(t^{1-2s}U) = 0 &\quad \textrm{in}~\mathcal{C},
\\
U=0 &\quad \textrm{on}~\partial_{L}\mathcal{C},
\\
\partial_{\nu}^{s} U \leq |v|^{2^{*}(s) -1} + A& \quad \textrm{on}~\Omega \times \{0\}.
\end{split}\right.
\end{equation*}
Then there is a constant $C=C(q_1, q_2, \Omega)$ such that
\begin{equation*}
\| U ( \cdot, 0) \|_{\lambda, \gamma_1, \gamma_2} \leq C \left( \|v\|_{\lambda, q_1, q_2}^{2^{*}(s)-1} +1 \right).
\end{equation*}
\end{lem}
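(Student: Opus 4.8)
The plan is to split the boundary source $|v|^{2^*(s)-1}+A$ into pieces according to the decomposition of $v$ supplied by the norm $\|v\|_{\lambda,q_1,q_2}$, to solve the $s$-harmonic extension problem for each piece, to dominate $|U|$ by the sum via the maximum principle (Lemma~\ref{lem-maximum}), and to estimate each piece by the trace--Sobolev bound of Lemma~\ref{lem-trace}. Concretely, set $\alpha=\|v\|_{\lambda,q_1,q_2}$, and for $\delta>0$ choose nonnegative $v_1\in L^{q_1}(\Omega)$ and $v_2\in L^{q_2}(\Omega)$ with $|v|\le v_1+v_2$, $\|v_1\|_{q_1}\le\alpha+\delta$ and $\|v_2\|_{q_2}\le(\alpha+\delta)\lambda^{\frac{N}{2^*(s)}-\frac{N}{q_2}}$. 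Since $2^*(s)-1=\frac{N+2s}{N-2s}>1$, the elementary inequality $(a+b)^r\le 2^{r-1}(a^r+b^r)$, applied with $r=2^*(s)-1$, gives $|v|^{2^*(s)-1}\le C\big(v_1^{2^*(s)-1}+v_2^{2^*(s)-1}\big)$, so the Neumann datum of $U$ is pointwise bounded by $Cv_1^{2^*(s)-1}+Cv_2^{2^*(s)-1}+A$. Hence, by linearity and Lemma~\ref{lem-maximum}, $|U|\le U_1+U_2+U_3$ on $\mathcal{C}$, where $U_1,U_2,U_3\in H^1_0(t^{1-2s},\mathcal{C})$ solve the extension problems with Neumann data $Cv_1^{2^*(s)-1}$, $Cv_2^{2^*(s)-1}$ and $A$ respectively. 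It remains to estimate each $U_i$.

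The estimates come from a single application of Lemma~\ref{lem-trace} to each piece. One has $v_1^{2^*(s)-1}\in L^{p_1}(\Omega)$ with $p_1=\frac{q_1}{2^*(s)-1}$ and $\|v_1^{2^*(s)-1}\|_{p_1}=\|v_1\|_{q_1}^{2^*(s)-1}\le(\alpha+\delta)^{2^*(s)-1}$; the hypotheses $\frac{2N}{N-2s}<q_1<\frac{N}{2s}\frac{N+2s}{N-2s}$ are exactly what makes $1<p_1<\frac{N}{2s}$, so Lemma~\ref{lem-trace} at the endpoint $\frac{N}{\gamma_1}=\frac{N}{p_1}-2s$ gives $\|U_1(\cdot,0)\|_{L^{\gamma_1}}\le C(\alpha+\delta)^{2^*(s)-1}$, and a direct check shows $\frac{N}{p_1}-2s=\frac{N+2s}{N-2s}\frac{N}{q_1}-2s$, i.e.\ $\gamma_1$ is precisely the exponent in the statement. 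The constant term contributes $\|U_3(\cdot,0)\|_{L^{\gamma_1}}\le C\|A\|_{L^{p_1}(\Omega)}\le C$ since $\Omega$ is bounded. For the second piece, $v_2^{2^*(s)-1}\in L^{p_2}(\Omega)$ with $p_2=\frac{q_2}{2^*(s)-1}$, and the hypothesis $\frac{N+2s}{N-2s}<q_2<\frac{2N}{N-2s}$ forces $1<p_2<\frac{N}{2s}$; Lemma~\ref{lem-trace} then yields $\|U_2(\cdot,0)\|_{L^{\gamma_2}}\le C(\alpha+\delta)^{2^*(s)-1}\lambda^{(2^*(s)-1)\left(\frac{N}{2^*(s)}-\frac{N}{q_2}\right)}$, and $\gamma_2$ here is again the exponent in the statement.

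Finally, write $|U(\cdot,0)|\le(U_1+U_3)(\cdot,0)+U_2(\cdot,0)$ on $\Omega$. From the bounds above, $\|(U_1+U_3)(\cdot,0)\|_{L^{\gamma_1}}\le C\big((\alpha+\delta)^{2^*(s)-1}+1\big)$ and $\|U_2(\cdot,0)\|_{L^{\gamma_2}}\le C\big((\alpha+\delta)^{2^*(s)-1}+1\big)\lambda^{\frac{N}{2^*(s)}-\frac{N}{\gamma_2}}$; taking these two functions as the pair in the definition~\eqref{eq-def} of $\|\cdot\|_{\lambda,\gamma_1,\gamma_2}$ gives $\|U(\cdot,0)\|_{\lambda,\gamma_1,\gamma_2}\le C\big((\alpha+\delta)^{2^*(s)-1}+1\big)$, and letting $\delta\to0$ proves the lemma. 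No conceptual difficulty arises; the content is bookkeeping of Lebesgue exponents, and the admissibility ranges $1<p_1,p_2<\frac{N}{2s}$ needed in Lemma~\ref{lem-trace} coincide exactly with the stated hypotheses on $q_1,q_2$. The one identity that must be verified with care is $(2^*(s)-1)\left(\frac{N}{2^*(s)}-\frac{N}{q_2}\right)=\frac{N}{2^*(s)}-\frac{N}{\gamma_2}$ --- which follows from $\frac{N}{2^*(s)}=\frac{N-2s}{2}$ together with the defining relation $\frac{1}{\gamma_2}=\frac{N+2s}{N-2s}\frac{1}{q_2}-\frac{2s}{N}$ --- since it is precisely this matching of the power of $\lambda$ that allows $U_2$ to serve as the second component in \eqref{eq-def} so that the $\lambda$-weighted part of the norm transforms correctly; one also checks in passing that $\frac{N}{N-2s}<\gamma_2<\frac{2N}{N-2s}<\gamma_1<\infty$, so the output pair $(\gamma_1,\gamma_2)$ again lies in the range of Proposition~\ref{prop-q1-q2}.
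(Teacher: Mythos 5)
Your proposal is correct and follows essentially the same route as the paper: decompose $v$ into the two pieces furnished by $\|v\|_{\lambda,q_1,q_2}$, compare $U$ with the extensions generated by each piece of the Neumann datum via Lemma \ref{lem-maximum}, and match the $\lambda$-exponents through the identity $(2^*(s)-1)\bigl(\tfrac{N}{2^*(s)}-\tfrac{N}{q_2}\bigr)=\tfrac{N}{2^*(s)}-\tfrac{N}{\gamma_2}$. The only difference is organizational: where the paper re-runs the test-function computation with $U_i^{\beta}$ inline, you invoke Lemma \ref{lem-trace} at the endpoint exponent after checking $1<p_i=q_i/(2^*(s)-1)<\tfrac{N}{2s}$, which is the same estimate packaged as a black box (and your explicit treatment of the constant $A$ and of the infimum via $\delta$ is a slight tightening, not a new idea).
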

\begin{proof}
Consider two positive functions $v_1 \in L^{q_1}(\Omega)$ and $v_2 \in L^{q_2} (\Omega)$ such that $|v| \leq v_1 + v_2$. Then,
\begin{equation*}
\partial_{\nu}^s U \leq C \left( v_1^{2^* (s)-1} + v_2^{2^* (s)-1} +1\right).
\end{equation*}
Let $U_1 \in H^1_0(t^{1-2s},\mathcal{C})$ and $U_2 \in H^1_0(t^{1-2s},\mathcal{C})$ be solutions to
\begin{equation}\label{eq-u-v}
\left\{ \begin{array}{ll}
\textrm{div}(t^{1-2s} \nabla U_i ) = 0 &\quad \textrm{in}~\mathcal{C},
\\
\partial_{\nu}^s U_i = v_i^{2^*(s)-1}&\quad \textrm{on}~\Omega \times \{0\},
\end{array}
\right. \quad \textrm{for}~ i =1,2.
\end{equation}
We note that $U_i$ is nonnegative.
Multiplying \eqref{eq-u-v} by $U_i^{\beta}$ for some $\beta > 1$, we have
\begin{equation*}
\frac{4\beta}{(\beta+1)^2} \int_{\mathcal{C}} t^{1-2s} |\nabla (U_i^{(\beta+1)/2})|^2\, dx dt = \int_{\Omega \times \{0\}} v_i^{2^*(s)-1}(x)U_i^{\beta}(x,0)\, dx.
\end{equation*}
Now we apply the Sobolev-trace inequality and H\"older's inequality to get
\begin{equation*}
\| U_i^{\frac{\beta+1}{2}} (x,0)\|_{L^{\frac{2N}{N-2s}}(\Omega)} \leq C \| v^{2^*(s)-1} \|_{L^r} \| U_i^{\beta} \|_{L^{\frac{\beta+1}{2\beta}\frac{2N}{N-2s}}},
\end{equation*}
where $r$ is chosen to satisfy $\frac{1}{r} + \frac{2\beta}{\beta+1} \frac{N-2s}{2N} = 1$. We take $\beta$ satisfying $\gamma_i =\frac{\beta+1}{2}\frac{2N}{N-2s}$.
Then one has $(2^*(s)-1)r = q_i$, and so the above inequality gives
\begin{equation*}
\| U_i (\cdot,0) \|_{L^{\gamma_i}} \leq C \| v_i \|_{L^{q_i}}^p\quad \textrm{for}~ i =1,2.
\end{equation*}
Thus we get
\begin{equation}\label{eq-boot}
\begin{split}
\| U(\cdot,0)\|_{\lambda, \gamma_1, \gamma_2} & \leq \| U_1 (\cdot, 0)\|_{L^{\gamma_1}} + \lambda^{\frac{N}{\gamma_2} -\frac{N}{2^* (s)}} \| U_i (\cdot, 0)\|_{L^{\gamma_2}} + C
\\
&\leq \| v_1 \|_{L^{q_1}}^{2^*(s)-1} + \lambda^{\frac{N}{\gamma_2} -\frac{N}{2^*(s)}} \|v_2\|_{L^{q_2}}^{2^* (s)-1} + C.
\end{split}
\end{equation}
From the fact that
$\frac{1}{2^* (s) -1 } \left(\frac{N}{\gamma_2}-\frac{N}{2^* (s)}\right) = \frac{N}{q_2} -\frac{N}{2^{*}(s)},$
the estimate \eqref{eq-boot} implies
\begin{equation*}
\left\| U(\cdot,0)\right\|_{\lambda,\gamma_1,\gamma_2} \leq C \left( \left\| v\right\|_{\lambda,q_1,q_2}^{2^* (s) -1} +1\right),
\end{equation*}
which shows the lemma.
\end{proof}
\begin{proof}[Proof of Proposition \ref{prop-q1-q2}]
By the result of Lemma \ref{lem-start}, there exists two numbers $q_{1} \in \left(\frac{2N}{N-2s},\infty\right)$ and $q_{2} \in \left( \frac{N}{N-2s},\frac{2N}{N-2s}\right)$ satisfying
\begin{equation*}
\sup_{n \in \mathbb{N}} \| D_n (\cdot,0) \|_{\rho_n, q_{1}, q_{2}} \leq C.
\end{equation*}
Then, by Lemma \ref{lem-boot} we have
\begin{equation*}
\sup_{n \in \mathbb{N}} \|D_n (\cdot,0)\|_{\rho_n, \gamma_1, \gamma_2} \leq C,
\end{equation*}
where $\gamma_1$ and $\gamma_2$ satisfy $\frac{1}{\gamma_i} = \frac{N+2s}{N-2s}\frac{1}{q_i} - \frac{2s}{N}$ for $i=1,2$. Iteratively applying this process with H\"older's inequality,
one can conclude the desired result.
\end{proof}

\section{Integral estimates}
In this section we establish some sharp $L^q$ estimates for solution sequence $\{U_n\}$ on some suitable annuli around the slowest bubbling point $x_n$, which play a fundamental role to prove our main theorems.
Let us define several domains:
\begin{itemize}
\item $B^{N}(x,r) = \{ y \in \mathbb{R}^N: |x-y |\leq r \}$ for $ x\in \mathbb{R}^N$ and $r>0$.
\\
\item $B^{N+1}(x,r) = \{ z \in \mathbb{R}^{N+1}_{+}: |z-(x,0)|\leq r\}$ for $x \in \mathbb{R}^{N}$ and $r>0$.
\\
\item For $d = N, N+1$, $A^{d}(x,[r_1,r_2]) = B^{d}(x,r_2) \setminus B^{d}(x,r_1) $ for $x \in \mathbb{R}^d$ and $r_2 >r_1 >0$.
\item For a domain $D \in \mathbb{R}^{N+1}_{+}$
\\
$\partial_{+} D = \{ (x,t) \in \partial D : t>0\}$,
\\
$\partial_{b}D = \{ x \in \mathbb{R}^N : (x,0) \in \partial D \cap \mathbb{R}^N \times \{0\}\}.$
\end{itemize}

Consider the annuli $A^N ({x_n}, [5m \lambda_n^{-1/2}, (5m + 5)\lambda_n^{-1/2}])$, $1 \leq m \leq k+1$. \
By choosing a subsequence, we may assume that for some $m \in \{1,\cdots, k+1\}$, the annuli $A^N({x_n}, [5m \lambda_n^{-1/2}, 5(m+1)\lambda_n^{-1/2}]) $ does not contain any other bubbling points. Let
\begin{equation*}
\left\{\begin{array}{l}
\mathcal{A}_n^1 (d) = A^d ({x_n}, [(5m+1)\lambda_n^{-1/2}, (5m+4)\lambda_n^{-1/2}]) \cap \mathcal{C} ~\textrm{or}~ \Omega,
\\
\mathcal{A}_n^2 (d) = A^d ({x_n}, [(5m+2)\lambda_n^{-1/2}, (5m+3)\lambda_n^{-1/2}])  \cap \mathcal{C}~\textrm{or}~ \Omega,
\end{array}
\right.\quad \textrm{for}\quad n \in \mathbb{N}, \quad d = N, N+1.
\end{equation*}
For a measurable set $A \subset \mathbb{R}^{n+1}_{+}$ we define a weighted measure
\begin{equation}\label{eq-wm}
m_s (A) = \int_A t^{1-2s} dx dt,
\end{equation}
and a weighted average
\begin{equation}
\fint_A f (x,t) t^{1-2s} dx dt = \frac{\int_A f(x,t) t^{1-2s} dxdt}{\int_A t^{1-2s} dx dt}.
\end{equation}
Now we state the result on the integral esimates of $U_n$ on the annuli $\mathcal{A}^1_n(N)$ and $\mathcal{A}^1_n(N+1)$.
\begin{prop}\label{int-estimate}
Let $\{ U_n\}_{n=1}^{\infty}$ be a sequence of solutions to \eqref{eq-main-q}
with $p = p_n \to 2^*(s)-2$ such that $\|U_n\| < C$ for some $C>0$ independent of $n \in \mathbb{N}$.
Then, for any $q >1$, there exists a constant $C_q >0$ such that
\begin{equation}\label{eq-average-q}
\sup_{n \in \mathbb{N}} \left\{ \fint_{\mathcal{A}_n^1 (N+1)} |U_n (x,t)|^q t^{1-2s} dxdt + \fint_{\mathcal{A}_n^1 (N)} |U_n (x,0)|^q dx \right\} \leq C_q.
\end{equation}
\end{prop}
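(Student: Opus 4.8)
The plan is to establish \eqref{eq-average-q} in two stages: first for all exponents $q$ in some interval $(1,q_*]$ with $q_*>1$, and then, by a nonlocal Moser iteration, for every $q>1$. Throughout we may work with the comparison function $D_n$ of \eqref{eq-uv} in place of $U_n$, since $|U_n|\le D_n$ by Lemma \ref{lem-maximum} and $\partial_\nu^s D_n=2|U_n|^{2^*(s)-1}+A$, and we use the refined norm bound $\sup_n\|D_n(\cdot,0)\|_{\lambda_n,q_1,q_2}\le C$ supplied by Proposition \ref{prop-q1-q2}, which is valid for any admissible pair $\frac{N}{N-2s}<q_2<\frac{2N}{N-2s}<q_1<\infty$ and which we are free to choose. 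It will be convenient to take $q_2$ slightly below $2^*(s)-1$ (note $\frac{N}{N-2s}<2^*(s)-1$) and $q_1$ below $\frac{N}{2s}\cdot\frac{N+2s}{N-2s}$; for each $n$ we then decompose $D_n(\cdot,0)\le d_{1,n}+d_{2,n}$ with $\|d_{1,n}\|_{q_1}\le C$ and $\|d_{2,n}\|_{q_2}\le C\lambda_n^{\frac{N}{2^*(s)}-\frac{N}{q_2}}$, so that $0\le\partial_\nu^s D_n\le C\big(d_{1,n}^{2^*(s)-1}+d_{2,n}^{2^*(s)-1}+1\big)$ on $\Omega\times\{0\}$.

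For the first stage I would use the representation of $D_n$ through the Green/Poisson kernel of the extension problem, which dominates $D_n(\cdot,0)$ on $\Omega\times\{0\}$ by the Riesz potential $I_{2s}$ of its Neumann datum up to a bounded harmonic correction, together with the nonlocal analogue of the Kilpel\"ainen--Mal\'y pointwise estimate proved in Appendix A: on a ball $B^N(x,c\lambda_n^{-1/2})$ which, by the choice of $m$ and the separation \eqref{eq-cc-cond} (together with the decay Lemma \ref{lem-entire-decay} that keeps the bubble tails summable), avoids all bubble cores, $D_n(x,0)$ is controlled by a truncated potential of $\partial_\nu^s D_n$ over that ball plus lower-order local averages handled by the energy bound. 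The key point is that the bubble piece is harmless: since $q_2<2^*(s)$ the quantity $\|d_{2,n}^{2^*(s)-1}\|_{q_2/(2^*(s)-1)}$ is a \emph{negative} power of $\lambda_n$, while converting an $L^{r^*}$-bound for $I_{2s}$ of this piece into an $L^q$-average over $\mathcal A_n^1(N)$ (whose $N$-measure is $\sim\lambda_n^{-N/2}$) costs a \emph{positive} power of $\lambda_n$; the Hardy--Littlewood--Sobolev mapping property of $I_{2s}$ together with the arithmetic of these two powers yields a nonpositive net power of $\lambda_n$ precisely because $q_2<2^*(s)-1$. The $V^0$-part of $d_{1,n}$ is bounded, the remainder part is absorbed by the smallness of $\|R_n\|$ as in the companion estimates of Lemma \ref{lem-harnack-basic}, and the additive constant and the harmonic correction are trivial. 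Averaging the resulting pointwise bounds over $\mathcal A_n^1(N)$, and over $\mathcal A_n^1(N+1)$ against $t^{1-2s}\,dx\,dt$ after passing to the bulk by the Poisson kernel of $\textrm{div}(t^{1-2s}\nabla\,\cdot\,)=0$ (using $m_s(\mathcal A_n^1(N+1))\sim\lambda_n^{-(N+2-2s)/2}$ and that averages over the covering subballs reassemble into the annular averages), one obtains \eqref{eq-average-q} for all $q$ in some interval $(1,q_*]$.

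For the second stage I would feed this single-exponent bound into the nonlocal Moser iteration of Lemma \ref{lem-harnack}: testing \eqref{eq-main-q} on the nested annuli $\mathcal A_n^2(N+1)\subset\mathcal A_n^1(N+1)$ with $|U_n|^{\beta}U_n$ cut off by Lipschitz functions adapted to them, and invoking the weighted Sobolev inequality \eqref{eq-sobolev-weight} (legitimate since $t^{1-2s}$ is a Muckenhoupt $A_2$ weight), the Sobolev--trace inequality \eqref{eq-sharp-trace}, and the structure $\partial_\nu^s U_n\le C(|U_n|^{2^*(s)-1}+1)$, one upgrades a uniform bound on the average of $|U_n|^q$ to a uniform bound on the average of $|U_n|^{\chi q}$ for a fixed dimensional gain $\chi>1$, at the price of a slight shrinking of the annulus that is absorbed by the nested family and H\"older's inequality. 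Finitely many iterations reach any prescribed $q>1$ and pass freely between the boundary average and the weighted bulk average, which proves the proposition.

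The main obstacle is the first stage. Because $(-\Delta)^s$ is nonlocal, the Neumann datum $|U_n|^{p_n}U_n+\mu U_n$ on $\mathcal A_n^1$ depends on the values of $U_n$ near the \emph{faster} bubbles $x_n^j$, $j\ge2$, so no purely interior argument on a bubble-free region can succeed; this long-range interaction must be estimated quantitatively, which is exactly what the refined norm $\|\cdot\|_{\lambda_n,q_1,q_2}$ of Section 4, the separation condition \eqref{eq-cc-cond}, and the nonlocal Kilpel\"ainen--Mal\'y estimate are for. Making every power of $\lambda_n$ cancel in the passage from potential bounds to annular averages, so that the averages stay bounded rather than growing in $n$, is the delicate heart of the argument.
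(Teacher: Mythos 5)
Your overall route is the same as the paper's: dominate $|U_n|$ by the comparison function $D_n$, use the truncated-potential (Kilpel\"ainen--Mal\'y type) estimate of Lemma \ref{lem-average} together with the refined norm bound of Proposition \ref{prop-q1-q2} and power counting in $\lambda_n$ to obtain a uniform weighted average bound for small exponents $\gamma\in\bigl(1,\tfrac{2N+2}{2N+1}\bigr)$, and then rescale by $\lambda_n^{-1/2}$ and run the Moser iteration of Lemma \ref{lem-harnack} on the bubble-free annulus to reach every $q>1$. The Green-kernel/Riesz-potential overlay you add in the first stage is not needed: Lemma \ref{lem-average} applied directly to $D_n$ only requires control of the local masses $\int_{B^N(y,\rho)}\partial_\nu^s D_n\,dx$, which is exactly how the paper proceeds.

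There is, however, a concrete flaw in your execution of the bubble-piece estimate. You propose taking $q_2$ slightly below $2^*(s)-1$ and claim the cancellation of powers of $\lambda_n$ works ``precisely because $q_2<2^*(s)-1$''. First, for $q_2<2^*(s)-1$ the exponent $q_2/(2^*(s)-1)$ is below $1$, so Hardy--Littlewood--Sobolev is inapplicable and, more to the point, the local masses $\int_{B^N(y,\rho)}d_{2,n}^{2^*(s)-1}\,dx$ required in the tail of Lemma \ref{lem-average} are \emph{not} controlled by $\|d_{2,n}\|_{q_2}$ (on a bounded set $L^1$ embeds into $L^r$ for $r<1$, not conversely), so the estimate you need is unavailable. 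Second, the arithmetic points the other way: with $\|d_{2,n}\|_{q_2}\le C\lambda_n^{\frac{N-2s}{2}-\frac{N}{q_2}}$, inserting the H\"older bound for $\int_{B^N(y,\rho)}d_{2,n}^{2^*(s)-1}\,dx$ into $\int_{\lambda_n^{-1/2}}^{1}\rho^{-(N-2s)}(\cdots)\frac{d\rho}{\rho}$ produces the net power $\frac{N}{2}-\frac{N(N+2s)}{2(N-2s)q_2}$ of $\lambda_n$, which is nonpositive iff $q_2\le 2^*(s)-1$, while the integrability of $d_{2,n}^{2^*(s)-1}$ forces $q_2\ge 2^*(s)-1$. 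The two constraints pin down exactly the paper's choice $q_2=2^*(s)-1=\frac{N+2s}{N-2s}$ (paired, e.g., with $q_1=\frac{N(N+2s)}{s(N-2s)}$), for which $\|d_{2,n}^{\,2^*(s)-1}\|_{L^1}\le C\lambda_n^{\frac{2s-N}{2}}$ and the tail integral contributes $C\lambda_n^{\frac{2s-N}{2}}\lambda_n^{\frac{N-2s}{2}}=C$. Finally, in your second stage you should state explicitly that Lemma \ref{lem-harnack} is applicable only because the rescaled coefficient $a=\lambda_n^{-s}\bigl(|\widetilde U_n|^{p_n}+C\bigr)$ satisfies $\|a\|_{L^{N/2s}}\le\epsilon(q)$ on the unit-scale annulus, which is precisely where the absence of bubbling points in $A^N\bigl(x_n,[5m\lambda_n^{-1/2},5(m+1)\lambda_n^{-1/2}]\bigr)$ enters; without this smallness a nested-annuli iteration with critical growth does not close.
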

To prove this proposition, we need the following lemma.
\begin{lem}\label{lem-average}
For $f \geq 0$, assume that $U \in H_{0}^{1}(t^{1-2s},\mathcal{C})$ satisfies
\begin{equation*}
\left\{\begin{array}{ll}
\textrm{\em div}(t^{1-2s} \nabla U) = 0&\quad \textrm{in}~\mathcal{C},
\\
\partial_{\nu}^s U = f &\quad \textrm{on}~\Omega \times \{0\},
\\
U=0 &\quad \textrm{on}~\partial_{L}\mathcal{C}.
\end{array}
\right.
\end{equation*}
For $\gamma \in \left(1, \frac{2N+2}{2N+1}\right)$, there exists a constant $C_q >0$ such that
\begin{equation*}
\left( \fint_{B^{N+1}(x,r)} t^{1-2s} U^{\gamma} dx dt \right)^{1/\gamma} \leq \fint_{B^{N+1}(x,1)} t^{1-2s} U^{\gamma} dx dt + C_q \int^{1}_{r} \left( \frac{1}{\rho^{N-2s}} \int_{B^N (x,\rho)} f(y) dy \right) \frac{d\rho}{\rho}
\end{equation*}
holds for any $x \in \Omega$ and $r \in (0,r_0)$ where $r_0= \textrm{dist}(x,\partial \Omega)$.
\end{lem}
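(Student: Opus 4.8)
The plan is to derive a pointwise (in $\rho$) estimate on the weighted averages of $U^\gamma$ over the half-balls $B^{N+1}(x,\rho)$ and then integrate a differential inequality in $\rho$. First I would use the Poisson-type representation for $U$: since $\operatorname{div}(t^{1-2s}\nabla U)=0$ in $\mathcal C$, $U=0$ on $\partial_L\mathcal C$ and $\partial_\nu^s U = f$ on $\Omega\times\{0\}$, after a zero extension we may compare $U$ with the function on $\mathbb R^{N+1}_+$ whose Neumann data is the zero extension of $f$; by Lemma \ref{lem-maximum} (applied to the difference) $U$ is dominated by that function, which admits the kernel expression $U(x,t)\le \int_{\mathbb R^N} c_{N,s}(|x-y|^2+t^2)^{-(N-2s)/2} f(y)\,dy$ appearing already in the proof of Lemma \ref{lem-entire-decay}. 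This reduces everything to an estimate for the Riesz-type potential of $f$.

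Next I would set $\phi(\rho) = \left(\fint_{B^{N+1}(x,\rho)} t^{1-2s} U^\gamma\,dxdt\right)^{1/\gamma}$ and bound its increments. Splitting $f = f\mathbf 1_{B^N(x,2\rho)} + f\mathbf 1_{B^N(x,2\rho)^c}$, the contribution of the far part to $U$ on $B^{N+1}(x,\rho)$ is, by the kernel bound, controlled by $\int_{2\rho}^{r_0}\rho'^{-(N-2s)}\big(\int_{B^N(x,\rho')}f\big)\,d\rho'/\rho'$ up to a constant and a harmless term coming from the value at scale $1$; this is exactly the integral term on the right-hand side. For the near part, I would use the weighted Sobolev inequality \eqref{eq-sobolev-weight} of Fabes–Kenig–Serapioni on $B^{N+1}(x,2\rho)$: the admissible exponent there is $\frac{2(N+1)}{N}$, and the hypothesis $\gamma < \frac{2N+2}{2N+1}$ is precisely what is needed so that $U^\gamma$ localized to the near ball, cut off smoothly, has its $L^{(N+1)/N\cdot\text{something}}$ norm absorbed — more concretely, one estimates $\fint_{B^{N+1}(x,\rho)}t^{1-2s}U_{\text{near}}^\gamma$ by Hölder against the weighted measure and then the Caccioppoli/energy estimate obtained by testing the equation with a cutoff times a power of $U$, controlling $\int t^{1-2s}|\nabla(\eta U_{\text{near}})|^2$ by $\int_{B^N} f\, U_{\text{near}}$ and closing via Sobolev-trace \eqref{eq-sharp-trace}. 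The upshot is a bound of the form $\phi(\rho) \le \phi(2\rho) + C\big(\text{scale-}\rho\text{ average of }f\big)$, which when summed dyadically from $\rho$ up to $1$ yields the stated inequality.

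The main obstacle I expect is bookkeeping the correct scaling of the error terms: one must check that the near-part contribution genuinely has the homogeneity $\rho^{-(N-2s)}\int_{B^N(x,\rho)}f$ (so that it matches the Kilpeläinen–Malý–type potential on the right), rather than a worse power of $\rho$, and this is exactly where the restriction $\gamma<\frac{2N+2}{2N+1}$ on the integrability exponent enters — it is the threshold below which the weighted Sobolev exponent beats the scaling loss. A secondary technical point is justifying the zero extension of $f$ and the comparison argument on the unbounded cylinder $\mathcal C$ versus $\mathbb R^{N+1}_+$, and handling the lateral boundary $\partial_L\mathcal C$; here the constraint $r<r_0=\operatorname{dist}(x,\partial\Omega)$ keeps all the balls $B^{N+1}(x,\rho)$ away from $\partial_L\mathcal C$, so the interior representation and Sobolev inequality apply without modification. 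Once the differential inequality in $\rho$ is in hand, the integration is routine.
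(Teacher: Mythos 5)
Your route is genuinely different from the paper's (the paper never uses a representation formula: it proves the purely local level-set estimate of Lemma \ref{lem-chain} with the nonlinear test function $\bigl(1-(1+U_+/d)^{2\gamma/q-1}\bigr)\eta^2$ and the weighted Sobolev inequality \eqref{eq-sobolev-weight}, and then runs a Kilpel\"ainen--Mal\'y-type dyadic iteration on levels $a_k$), but as written your argument has a real gap in the far-field step. The comparison $U\le W$, where $W$ is the half-space potential of the zero-extended $f$, is only one-sided: the contribution to $W$ on $B^{N+1}(x,r)$ coming from $f$ supported outside $B^{N}(x,1)$ is of order $\int_{\{|y-x|\ge 1\}}|x-y|^{-(N-2s)}f(y)\,dy$, and this quantity is controlled by neither term on the right-hand side of the lemma: the Riesz/Wolff-type integral only sees $f$ on $B^{N}(x,1)$, and the first term is an average of $U$, not of $W$. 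Because of the lateral Dirichlet condition, $U$ near $x$ can be much smaller than $W$ near $x$ (take $f$ large and concentrated near $\partial\Omega$, far from $x$: then $W$ is huge near $x$ while both $\fint_{B^{N+1}(x,1)}t^{1-2s}U^{\gamma}\,dxdt$ and the local potential term stay small), so the piece you dismiss as ``a harmless term coming from the value at scale $1$'' cannot be absorbed. A workable repair is to decompose $U$ itself rather than dominate it globally: write $U=U_1+U_2$ with Neumann data $f\mathbf{1}_{B^N(x,1)}$ and $f\mathbf{1}_{\Omega\setminus B^N(x,1)}$ respectively; then $U_2\ge 0$ solves the homogeneous equation with zero Neumann data on $B^N(x,1)\times\{0\}$, so a local $L^{\gamma}$-to-$L^{\infty}$ Moser estimate (in the spirit of Lemma \ref{lem-harnack}) bounds $\sup_{B^{N+1}(x,1/2)}U_2$ by $\bigl(\fint_{B^{N+1}(x,1)}t^{1-2s}U^{\gamma}\,dxdt\bigr)^{1/\gamma}$, and the kernel bound is applied only to $U_1$. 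Note also that Lemma \ref{lem-maximum} is stated for functions with zero lateral trace, so it does not apply verbatim to the difference $W-U$, whose lateral datum is $W\ge 0$; a separate (easy) comparison argument is needed there.

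Two smaller points. Once the far field is handled as above, the near/mid-range part is better done by direct integration of the kernel: the weighted $L^{\gamma}$ norm of $(|x'-y|^2+t^2)^{-(N-2s)/2}$ over a half-ball of radius $\rho$ is finite for every $\gamma<\frac{N+2-2s}{N-2s}$ (in particular for $\gamma<\frac{2N+2}{2N+1}$) and scales exactly like $\rho^{-(N-2s)}\int_{B^N(x,2\rho)}f$ after averaging, which matches the bottom scale of the potential term; by contrast, your proposed Caccioppoli/energy estimate produces quantities quadratic in $U$ and linear in $fU$, not linear in $f$, and getting the linear-in-$f$ homogeneity out of energy estimates is precisely the delicate normalization by the level $d$ that the paper carries out in Lemma \ref{lem-chain}. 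Finally, the dyadic summation of $\phi(\rho)\le\phi(2\rho)+C\rho^{-(N-2s)}\int_{B^N(x,\rho)}f$ is indeed routine, but that inequality itself is the whole content of the lemma, so it should not be treated as a black box.
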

\noindent This lemma is analogous to Proposition C.1 in \cite{CPY}. We refer to \mbox{Appendix A} for the proof of this result.

\begin{proof}[Proof of Proposition \ref{int-estimate}] We consider the function $D_n$ such that
\begin{equation}
\left\{\begin{array}{ll}
\textrm{div}(t^{1-2s} \nabla D_n) = 0&\quad \textrm{in}~ \mathcal{C},
\\
D_n =0&\quad \textrm{on}~\partial_L \mathcal{C},
\\
\partial_{\nu}^{s} D_n = |U_n|^{2^{*}(s)-1} + C&\quad \textrm{on}~\Omega \times\{0\}.
\end{array}
\right.
\end{equation}
Then we have $\|D_n \| \leq C \|U_n \| +C$, and also $|U_n | \leq D_n$ by the maximum principle.
Choose a point $y \in \Omega$. For $\gamma \in \left( 1, \frac{2N+2}{2N+1}\right)$ we claim that
\begin{equation}\label{eq-claim}
\sup_{ r \in ( \lambda_n^{-1/2}, 1)} \fint_{B^{N+1}(y,r)} t^{1-2s} |D_n|^{\gamma} (x,t) dx dt \leq C,
\end{equation}
with $C>0$ independent of $y \in \Omega$ and $n \in \mathbb{N}$. We first note that
\[\sup_{n \in \mathbb{N}}\|D_n \| \leq  C \sup_{n \in \mathbb{N}}\| U_n\| + C \leq C.\]
Thus, using the Sobolev embedding \eqref{eq-sobolev-weight} and H\"older's inequality we deduce
\begin{equation*}
\fint_{B^{N+1} (y,1)} t^{1-2s} |D_n|^{\gamma}(x,t) dxdt \leq C.
\end{equation*}
Combining this with Lemma \ref{lem-average}, for  each $0<r<\textrm{dist}(y,\partial \Omega)$ we get
\begin{equation}\label{eq-claim-proof}
\left( \fint_{B^{N+1} (y,r)} t^{1-2s} D_n^{\gamma} dx dt\right)^{1/\gamma} \leq 
C + C \int_{r}^{1} \left[ \frac{1}{\rho^{N-2s}} \int_{B^N (y,\rho)} \left(|U_n|^{2^* (s)-1} (x,0) + C \right) dx \right] \frac{d\rho}{\rho}.
\end{equation}
In order to bound the last term on the right, we set $q_1 = \frac{N(N+2s)}{s(N-2s)} $ and $q_2 = \frac{N+2s}{N-2s}$, and apply  Proposition \ref{prop-q1-q2} to find functions $w_n^1 \in L^{q_1}(\Omega)$ and $w_n^2 \in L^{q_2}(\Omega)$ such that $|U_n | \leq w_n^1 + w_n^2$ and
\begin{equation}\label{eq-w1w2}
\| w_n^1 \|_{L^{q_1}} \leq C \quad\textrm{and}\quad \| w_n^2 \|_{L^{q_2}} \leq C \lambda_n^{N/{2^* (s)} - N/q_2}.
\end{equation}
Then, 
\begin{equation}\label{eq-tri}
\begin{split}
&\int_{\sigma_n^{-1/2}}^{1} \frac{1}{t^{N-2s+1}}\left[\int_{B_t (x_n)} U_n^{2^* (s) -1} (y,0) dy \right]dt
\\
&\quad\quad \leq C \int_{r}^1 \frac{1}{t^{N-2s+1}} \left[ \int_{B^N (y,t)} (w_n^1)^{2^* (s)-1} (x) dx \right] dt + C \int_{r}^{1} \frac{1}{t^{N-2s+1}} \left[ \int_{B^N (y,t)} (w_n^2)^{2^* (s) -1} (x) dx \right] dt.
\end{split}
\end{equation}
We use \eqref{eq-w1w2} to deduce
\begin{equation*}
 \int_{r}^1 \frac{1}{t^{N-2s+1}} \left[ \int_{B^N (y,t)} (w_n^1)^{2^* (s) -1} (x) dx \right] dt  \leq C \int_{\sigma_n^{-1/2}}^{1} \frac{1}{t^{N-s}} (t^{N (N-2s+1)/N})\| (w_n^1)^{2^* (s) -1} \|_{L^{\frac{N}{s}}(\Omega) } \leq C,
\end{equation*}
and
\begin{equation*}
\begin{split}
&\int_{r}^{1} \frac{1}{t^{N-2s+1}} \left[ \int_{B^N (y,t)} (w_n^2)^{2^* (s) -1} (x,0) dx \right] dt
\\
&\quad\quad\quad \leq \int_{\sigma_n^{-1/2}}^{1} \frac{1}{t^{N-2s+1}} \left[ C \sigma_n^{\frac{N-2s}{2}- \frac{N (N-2s)}{N+2s}} \right]^{\frac{N+2s}{N-2s}} dt \leq C \sigma_n^{(N-2s)/2} \sigma_n^{-(N-2s)/2} = C.
\end{split}
\end{equation*}
These two estimates with \eqref{eq-tri} and \eqref{eq-claim-proof} prove the claim \eqref{eq-claim}. As a result we have
\begin{equation}\label{eq-gamma}
\sup_{n \in \mathbb{N}}  \fint_{A_n^{N+1}} |U_n (x,t)|^{\gamma} t^{1-2s} dxdt  \leq C.
\end{equation}
To complete the proof, we only need to raise $\gamma$ to higher orders in the above average estimate. In this regard, we set
\begin{equation*}
\widetilde{U}_n (z) = U_n (\lambda_n^{-\frac{1}{2}} z + (x_n,0)).
\end{equation*}
Then it satisfies
\begin{equation*}
\left\{ \begin{array}{ll} \textrm{div}(t^{1-2s} \nabla \widetilde{U}_n) = 0, &\quad \textrm{in} ~ B^{N+1}(0,5m+5) 
\\
\partial_{\nu}^s \widetilde{U}_n = \lambda ^{-s} (\widetilde{U}_n^{p-1} + C) \widetilde{U}_n
&\quad \textrm{on}~ B^N (0,5m+5) \times\{0\},
\end{array}
\right.
\end{equation*}
and for $\gamma \in \left(1,\frac{2N+2}{2N+1}\right)$, the estimate \eqref{eq-gamma} gives
\begin{equation}\label{eq-gamma-1}
\int_{A^{N+1}(0,[5m, 5m+5])} t^{1-2s} \widetilde{U}_n^{\gamma} dx dt \leq C.
\end{equation}
Moreover, since $A^N (x_n, [5m \lambda_n^{-1/2}, 5(m+1) \lambda_n^{-1/2}])$ does not any bubbling point of $U_n$, we easily get
\begin{equation*}
\lim_{n \rightarrow \infty} \int_{A^{N+1}(0, [5m+1, 5m+4])} \widetilde{U}_n (x,0)^{2^* (s)} dx = 0.
\end{equation*}
Given this and \eqref{eq-gamma-1}, we may apply Lemma \ref{lem-harnack} to deduce that for any $q>1$,
\begin{equation*}
\int_{A^{N+1}(0,[5m+2,5m+3])} t^{1-2s}\widetilde{U}_n^q dx dt + \int_{A^{N}(0,[5m+2,5m+3])} \widetilde{U}_n^q dx \leq C_q.
\end{equation*}
By writing down this inequality in terms of $U_n$ on $\mathcal{A}_n^{N+1}$ and $\mathcal{A}_n^N$, we get the desired inequality \eqref{eq-average-q}. The proof is completed.
\end{proof}

\begin{prop}\label{int-grad-estimate}
Let $\{U_n\}_{n\in \mathbb{N}}$ be a sequence of solutions to \eqref{eq-main-q} with $p = p_n \to 2^*(s)-2$ such that
$\|U_n\|$ is bounded uniformly for $n \in \mathbb{N}$. Then there exists $C > 0$ independent of $n$ such that
\[
\int_{\mathcal{A}^2_n(N+1)} t^{1-2s}|\nabla U_n(x,t)|^2\,dxdt \leq C\lambda_n^{\frac{2s-N}{2}}
\]
\end{prop}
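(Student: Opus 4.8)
The plan is to obtain the estimate from a Caccioppoli-type (localized energy) inequality on the annular region, whose lower-order terms are then controlled by the $L^q$-average bounds established in Proposition \ref{int-estimate}.

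First I would fix, by a pigeonhole argument among the $k+1$ pairwise disjoint candidate annuli (there being only $k$ bubbling points), the integer $m\in\{1,\dots,k+1\}$ for which $A^N(x_n,[5m\lambda_n^{-1/2},5(m+1)\lambda_n^{-1/2}])$ carries no bubbling point; this is exactly the choice already made in the set-up preceding Proposition \ref{int-estimate}. Then I would pick a cut-off $\eta_n\in C^\infty_c(\mathbb{R}^{N+1})$, radial about $(x_n,0)$, with $0\le\eta_n\le1$, $\eta_n\equiv1$ on $A^{N+1}(x_n,[(5m+2)\lambda_n^{-1/2},(5m+3)\lambda_n^{-1/2}])$, $\supp\eta_n$ contained in $A^{N+1}(x_n,[(5m+1)\lambda_n^{-1/2},(5m+4)\lambda_n^{-1/2}])$, and $|\nabla\eta_n|\le C\lambda_n^{1/2}$. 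Since $U_n$ vanishes on $\partial_L\mathcal{C}$, the function $\Phi_n:=\eta_n^2U_n$ lies in $H^1_0(t^{1-2s},\mathcal{C})$ and so is an admissible test function in the weak formulation of \eqref{eq-main-q}.

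Testing \eqref{eq-main-q} against $\Phi_n$, expanding $\nabla(\eta_n^2U_n)=\eta_n^2\nabla U_n+2\eta_nU_n\nabla\eta_n$, and absorbing the cross term by Young's inequality $2|\eta_nU_n\nabla\eta_n\cdot\nabla U_n|\le\tfrac12\eta_n^2|\nabla U_n|^2+2U_n^2|\nabla\eta_n|^2$, one arrives at
\[
\frac12\int_{\mathcal{C}}t^{1-2s}\eta_n^2|\nabla U_n|^2\,dxdt\le C_s\int_{\Omega}\bigl(|U_n|^{p_n+2}+\mu U_n^2\bigr)\eta_n^2(x,0)\,dx+2\int_{\mathcal{C}}t^{1-2s}U_n^2|\nabla\eta_n|^2\,dxdt.
\]
All terms on the right are supported in $\mathcal{A}_n^1(N+1)$, resp.\ $\mathcal{A}_n^1(N)$. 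For the gradient term I would use $|\nabla\eta_n|^2\le C\lambda_n$ together with the scaling identity $m_s(\mathcal{A}_n^1(N+1))\le C\lambda_n^{(2s-N-2)/2}$, obtained from the change of variables $z\mapsto x_n+\lambda_n^{-1/2}z$, and Proposition \ref{int-estimate} with $q=2$, which gives
\[
\int_{\mathcal{C}}t^{1-2s}U_n^2|\nabla\eta_n|^2\,dxdt\le C\lambda_n\,m_s(\mathcal{A}_n^1(N+1))\fint_{\mathcal{A}_n^1(N+1)}t^{1-2s}U_n^2\,dxdt\le C\lambda_n^{(2s-N)/2}.
\]
For the boundary terms I would use $|\mathcal{A}_n^1(N)|\le C\lambda_n^{-N/2}$, the elementary bound $x^{p_n+2}\le x^{2^*(s)}+1$ (valid since $p_n+2<2^*(s)$), and Proposition \ref{int-estimate} with $q=2^*(s)$ and $q=2$, obtaining $\int_{\Omega}(|U_n|^{p_n+2}+\mu U_n^2)\eta_n^2(x,0)\,dx\le C\lambda_n^{-N/2}=o\bigl(\lambda_n^{(2s-N)/2}\bigr)$. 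Since $\eta_n\equiv1$ on $\mathcal{A}_n^2(N+1)$, combining the two estimates yields $\int_{\mathcal{A}_n^2(N+1)}t^{1-2s}|\nabla U_n|^2\,dxdt\le C\lambda_n^{(2s-N)/2}$, as claimed.

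I do not expect a serious obstacle here: the argument is the classical Caccioppoli scheme, and the only points needing care are the scaling computation of the weighted measure $m_s(\mathcal{A}_n^1(N+1))$ and the bookkeeping of exponents ensuring that the boundary contributions over the bubble-free annulus are genuinely of lower order than $\lambda_n^{(2s-N)/2}$. The essential ingredient — uniform boundedness in $n$ of the $L^q$-averages of $U_n$ on these annuli for every $q>1$ — is precisely the content of Proposition \ref{int-estimate}.
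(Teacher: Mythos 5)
Your proposal is correct and follows essentially the same route as the paper: test the weak formulation of \eqref{eq-main-q} against $\phi_n^2 U_n$ with a cut-off satisfying $|\nabla\phi_n|\le C\lambda_n^{1/2}$, absorb the cross term, and bound the resulting boundary and lower-order bulk terms via Proposition \ref{int-estimate}, which yields exactly the exponents $\lambda_n^{-N/2}$ and $\lambda_n^{(2s-N)/2}$ appearing in the paper's proof. Your extra details (the weighted-measure scaling $m_s(\mathcal{A}_n^1(N+1))\le C\lambda_n^{(2s-N-2)/2}$ and the pigeonhole choice of the bubble-free annulus) are just explicit versions of steps the paper treats implicitly or in its Section 5 set-up.
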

\begin{proof}
Let $\phi_n \in C^\infty_0(A^{N+1} ({x_n}, [(5m+1)\lambda_n^{-1/2}, (5m+4)\lambda_n^{-1/2}]))$ be a sequence of cut-off functions such that
$\phi_n = 1$ on $A^{N+1} ({x_n}, [(5m+2)\lambda_n^{-1/2}, (5m+3)\lambda_n^{-1/2}])$ and
$0 \leq \phi_n \leq 1$, $|\nabla\phi_n| \leq C\lambda_n^{1/2}$ on $A^{N+1} ({x_n}, [(5m+1)\lambda_n^{-1/2}, (5m+4)\lambda_n^{-1/2}])$.
Then we see from \eqref{eq-main-q} that
\begin{multline}
\int_{\mathcal{A}^1_n(N+1)}t^{1-2s}\nabla U_n(x,t)\cdot\nabla\left(\phi_n^2(x,t)U_n(x,t)\right)\,dx dt \\
\leq C_s\int_{\mathcal{A}^1_n(N)}\left(|U_n(x,0)|^{p_n+1}+\mu|U_n (x,0)|\right)|\phi_n^2(x,0)U_n(x,0)|\,dx,
\end{multline}
which yields
\[
\begin{aligned}
&\int_{\mathcal{A}^1_n(N+1)}t^{1-2s}\phi^2_n(x,t)|\nabla U_n(x,t)|^2\,dxdt \\
&\leq C\int_{\mathcal{A}^1_n(N)}|U_n(x,0)|^{p_n+2} +|U_n(x,0)|^2\,dx
+C\int_{\mathcal{A}^1_n(N+1)}t^{1-2s}|U_n(x,t)\nabla\phi(x,t)|^2\,dxdt \\
&\leq C\int_{\mathcal{A}^1_n(N)}\left(|U_n(x,0)|^{2^*(s)} +|U_n(x,0)|^2 + 1\right)\,dx
+C\lambda_n^{1}\int_{\mathcal{A}^1_n(N+1)}t^{1-2s}|U_n(x,t)|^2\,dxdt.
\end{aligned}
\]
Then, this and Proposition \ref{int-estimate} show that
\[
\int_{\mathcal{A}^2_n(N+1)}t^{1-2s}|\nabla U_n (x,t)|^2\,dxdt
\leq C\lambda_n^{-\frac{N}{2}} +C\lambda_n^{-\frac{N+2-2s}{2}+1} \leq C\lambda_n^{\frac{2s-N}{2}}.
\]
The proof is completed.
\end{proof}

\section{End of the proofs of main theorems}
We shall complete in this section the proof of Theorems \ref{thm-main-1} and \ref{thm-uniform-bound}.
As we explained before, the strategy for the proof of Theorem \ref{thm-uniform-bound} is to show there could be no bubbles in the decomposition \eqref{eq-decom} for any uniformly norm bounded sequence of solutions to \eqref{eq-main-q} with $p = p_n \to 2^*(s)-2$.
Indeed, we will show a contradiction takes place if we assume that there are bubbles.
This will be accomplished by using a local Pohozaev identity on concentric balls centered the bubbling point $x_n$, the blow up rate of which is minimal among all bubbling points.

\begin{proof}[Proof of Theorem \ref{thm-uniform-bound}]
Assume that $\{U_n\}_{n \in \mathbb{N}}$ is noncompact. Then we recall that the solutions follow the representation
\[
U_n = V^0 + \sum_{j=1}^{k}\rho_{n}^{j}( V^{j}) + R_n,
\]
described in Lemma \ref{lem-cc-bounded} with some $R_n \to 0$ in $H_{0}^{1}(t^{1-2s},\mathcal{C})$. We also may assume that our slowest bubbling point
$x_n$ is $x_n^1$.
We denote
\[
\mathcal{E}_n(N, l) = B^{N}(x_n, l\lambda_n^{-1/2}) \cap \Omega,\quad
\mathcal{E}_n(N+1, l) = B^{N+1}((x_n,0), l\lambda_n^{-1/2}) \cap \mathcal{C}
\]
where $l \in (5m+2, 5m+3)$.
By the local Pohozaev identity \eqref{eq-local}, we have
\begin{equation}\label{eq-poho-main}
\begin{aligned}
&C_s \left\{ \left(\frac{N}{p_n +2} - \frac{N-2s}{2}\right) \int_{\mathcal{E}_n(N, l)} |U_n(x,0)|^{p_n +2} dx + \mu s\int_{\mathcal{E}_n(N,l)} |U_n(x,0)|^2 dx\right\} \\
&\quad = \int_{\partial\mathcal{E}_n(N, l)} \left(\frac{\mu}{2}|U_n(x,0)|^2 +\frac{1}{p_n+2}|U_n(x,0)|^{p_n+2}\right)(x-x_0, \nu_x)\,dS_x \\
&\quad\quad+\int_{\partial_+\mathcal{E}_n(N+1,l)} t^{1-2s} \left((z-z_0, \nabla U_n(z)) \nabla U_n(z)
-(z-z_0) \frac{|\nabla U_n(z)|^2}{2}, \nu_z \right)\,dS_z \\
&\quad\quad+\left(\frac{N-2s}{2}\right)\int_{\partial_+\mathcal{E}_n(N+1,l)} t^{1-2s} U_n(z)\frac{\partial U_n(z)}{\partial \nu_z}\,dS_z,
\end{aligned}
\end{equation}
where $x_0 \in \R^N$ is arbitrary, $z_0 = (x_0, 0)$ and $z = (x,t)$.
We decompose $\partial\mathcal{E}_n(N,l)$ as
\[
\partial\mathcal{E}_n(N,l) =
\partial_{\text{int}}\mathcal{E}_n(N,l) \cup \partial_{\text{ext}}\mathcal{E}_n(N,l)
\]
where
$\partial_{\text{int}}\mathcal{E}_n(N,l) := \partial\mathcal{E}_n(N,l) \cap \Omega$ and
$\partial_{\text{ext}}\mathcal{E}_n(N,l) := \partial\mathcal{E}_n(N,l) \cap \partial\Omega$.
Similarly,
\[
\partial_+\mathcal{E}_n(N+1,l) =
\partial_{\text{int}}\mathcal{E}_n(N+1,l) \cup \partial_{\text{ext}}\mathcal{E}_n(N+1,l)
\]
where
$\partial_{\text{int}}\mathcal{E}_n(N+1,l) := \partial_+\mathcal{E}_n(N+1,l) \cap \mathcal{C}$ and
$\partial_{\text{ext}}\mathcal{E}_n(N+1,l) := \partial_+\mathcal{E}_n(N+1,l) \cap \partial\mathcal{C}$.
For each $x_n$ and $l$, we have two cases:
\[
\text{(i) } B^N(x_n, l) \subset \Omega \quad \text{or} \quad \text{(ii) } B^N(x_n, l) \not\subset \Omega.
\]
For the case (i), we take $x_0 = x_n$. For the case (ii), we take $x_0 \in \R^N \setminus \Omega$
such that $|x_0 - x_n| \leq C\lambda_n^{-1/2}$ and $\nu_x \cdot (x-x_0) \leq 0$
at all $x \in \partial_{\text{ext}}\mathcal{E}_n(N,l)$.
Then, we see from the fact $\nu_z = (\nu_x, 0)$ that
\[
\nu_z\cdot(z-z_0) = (\nu_x, 0) \cdot (x-x_0, t-0) = \nu_x\cdot(x-x_0) \leq 0
\]
for any $z = (x, t) \in \partial_{\text{ext}}\mathcal{E}_n(N+1, l)$.
Then, the fact $u_n = 0$ on $\partial_{\text{ext}}\mathcal{E}_n(N, l) \cup \partial_{\text{ext}}\mathcal{E}_n(N+1, l)$ yields
\begin{align*}
&\int_{\partial_\text{ext}\mathcal{E}_n(N,l)} \left(\frac{\mu}{2}|U_n(x,0)|^2 +\frac{1}{p_n+2}|U_n(x,0)|^{p_n+2}\right)(x-x_0, \nu_x)\,dS_x = 0, \\
&\int_{\partial_\text{ext}\mathcal{E}_n(N+1,l)} t^{1-2s} U_n(z)\frac{\partial U_n(z)}{\partial \nu_z}\,dS_z = 0.
\end{align*}
Also, since $\nabla U_n = \pm|\nabla U_n|\nu_z$ on $\partial_\text{ext}\mathcal{E}_n(N+1,l)$, we see
\begin{align*}
&\int_{\partial_\text{ext}\mathcal{E}_n(N+1,l)} t^{1-2s} \left((z-z_0, \nabla U_n(z)) \nabla U_n(z)
-(z-z_0) \frac{|\nabla U_n(z)|^2}{2}, \nu_z \right)\,dS_z, \\
& = \int_{\partial_\text{ext}\mathcal{E}_n(N+1,l)} t^{1-2s}\frac{|\nabla U_n(z)|^2}{2}\left(z-z_0, \nu_z\right) \,dS_z \leq 0.
\end{align*}
Combining this with \eqref{eq-poho-main}, we obtain
\begin{equation}\label{eq-poho-est}
\begin{aligned}
\int_{\mathcal{E}_n(N,l)} |U_n(x,0)|^2\,dx &\leq
C\lambda_n^{-1/2}\int_{\partial_\text{int}\mathcal{E}_n(N,l)} \left(|U_n(x,0)|^2 +|U_n(x,0)|^{p_n+2}\right)\,dS_x \\
&+C\int_{\partial_\text{int}\mathcal{E}_n(N+1,l)} t^{1-2s} |U_n(z)||\nabla U_n(z)|\,dS_z \\
&+C\lambda_n^{-1/2}\int_{\partial_\text{int}\mathcal{E}_n(N+1,l)} t^{1-2s}|\nabla U_n(z)|^2\,dS_z.
\end{aligned}
\end{equation}
Extending $U_n$ to $0$ on $\R^{N+1}\setminus \mathcal{C}$ and integrating \eqref{eq-poho-est} with respect to $l$, we get
\begin{equation*}
\begin{aligned} \int_{5m+2}^{5m+3}\int_{\mathcal{E}_n(N,l)} |U_n(x,0)|^2\,dx\,dl &\leq
C\int_{\mathcal{A}_n^2(N)} \left(|U_n(x,0)|^2 +|U_n(x,0)|^{p_n+2}\right)\,dx \\
&+C\lambda_n^{1/2}\int_{\mathcal{A}_n^2(N+1)} t^{1-2s} |U_n(z)||\nabla U_n(z)|\,dz \\
&+C\int_{\mathcal{A}_n^2(N+1)} t^{1-2s}|\nabla U_n(z)|^2\,dz,
\end{aligned}
\end{equation*}
from which we deduce that
\begin{equation}\label{eq-poho-upper}
\int_{\mathcal{E}_n(N,(5m+2)\lambda_n^{-1/2})}|U_n (x,0)|^2\,dx \leq \int_{5m+2}^{5m+3}\int_{\mathcal{E}_n(N,l)} |U_n (x,0)|^2\,dx\,dl \leq C\lambda_n^{\frac{2s-N}{2}},
\end{equation}
by applying Proposition \ref{int-estimate}, Proposition \ref{int-grad-estimate} and H\"older inequality.

On the other hand, one can observe by extending $U_n = 0$ on $\R^{N+1}_+ \setminus \Omega$ that for large $n$
\begin{equation*}
\begin{split}
&\int_{\mathcal{E}_n(N, (5m+2)\lambda_n^{-1/2})} |U_n (x,0)|^2\,dx \\
&=\int_{B^N(x_n,(5m+2)\lambda_n^{-1/2})} |U_n (x,0)|^2\, dx
\geq \int_{B^N(x_n,\lambda_n^{-1})} |U_n (x,0)|^2\, dx \\
&\geq C\int_{B^N (x_n, \lambda_n^{-1})} |\rho_n^1(V^1)(x,0)|^2\,dx \\
&\qquad\qquad-C\int_{B^N(x_n,\lambda_n^{-1})}\sum_{j=2}^k|\rho_n^j(V^j)(x,0)|^2+|V^0(x,0)|^2 +|R_n(x,0)|^2\,dx.
\end{split}
\end{equation*}
One can compute
\[
\int_{B^N (x_n, \lambda_n^{-1})} |\rho_n^1(V^1)(x,0)|^2\,dx =
\left(\int_{B^N (0, 1)} |V^1(x,0)|^2\,dx\right)\lambda_n^{-2s}
\]
and
\[
\begin{aligned}
\int_{B^N(x_n,\lambda_n^{-1})}|\rho_n^j(V^j)(x,0)|^2\,dx &=
\left(\int_{S_n^j} |V^j(x,0)|^2\,dx\right)(\lambda_n^j)^{-2s} \\
&=\left(\int_{S_n^j} |V^j(x,0)|^2\,dx\right)\left(\frac{\lambda_n^j}{\lambda_n}\right)^{-2s}\lambda_n^{-2s},
\end{aligned}
\]
where
\[
S_n^j := \lambda_n^j(B^N(x_n,\lambda_n^{-1}) - x_n^j).
\]
Then, the fact
\[
\frac{\lambda_n^j}{\lambda_n} +\lambda_n\lambda_n^j |x_n-x_n^j|^2 \rightarrow \infty~\textrm{as}~ n \rightarrow \infty~\textrm{for all}~ j \neq 1,
\]
implies that
\[
\left(\int_{S_n^j} |V^j(x,0)|^2\,dx\right)\left(\frac{\lambda_n^j}{\lambda_n}\right)^{-2s} = o(1).
\]
Also, since $V^0 \in L^\infty(\mathcal{C})$ and $R_n = o(1)$ in $H_{0}^{1}(t^{1-2s},\mathcal{C})$ as $n \to \infty$,
we see
\[
\int_{B^N (x_n, \lambda_n^{-1})} |V^0(x,0)|^2\,dx \leq C\lambda_n^{-N} \leq o(1)\lambda_n^{-2s}
\]
and
\[
\int_{B^N (x_n, \lambda_n^{-1})}|R_n(x,0)|^2\,dx \
\leq C\left(\int_{\Omega}|R_n(x,0)|^{2^*(s)}\,dx\right)^{\frac{2}{2^*(s)}} {\lambda_n}^{-2s}
=o(1){\lambda_n}^{-2s}
\]
from the Sobolev-trace inequality \eqref{eq-sharp-trace}.
Thus we deduce
\begin{equation}\label{eq-poho-lower}
\int_{\mathcal{E}_n(N, (5m+2)\lambda_n^{-1/2})} |U_n (x,0)|^2\,dx
\geq c\lambda_n^{-2s}.
\end{equation}
Now, combining \eqref{eq-poho-upper} with
\eqref{eq-poho-lower} we finally obtain
\begin{equation*}
\lambda_n^{-2s} \leq C\lambda_n^{\frac{2s-N}{2}}.
\end{equation*}
Since $\lim_{n \rightarrow \infty} \lambda_n = \infty$, this inequality implies that $-2s \leq \frac{2s-N}{2}$, which is equivalent to $N \leq 6s$. However this contradicts with our assumption $N>6s$.
Thus, one can conclude that there are no bubbles in $U_n$
so that $U_n \rightarrow V^0$ in $H_{0}^{1}(t^{1-2s},\mathcal{C})$, and the set of solutions $\{U_n\}_{n \in \mathbb{N}}$ is compact. This completes the whole proof of Theorem \ref{thm-uniform-bound}.


\end{proof}

\begin{proof}[Proof of Theorem \ref{thm-main-1}]
We use the variational methods and a topological index theory to construct infinitely many solutions to \eqref{eq-main}.
We have already seen that \eqref{eq-main} is equivalent to \eqref{u0inc}.
So let us define
\begin{equation}\label{functional}
I_{\ep}(u) := \frac{1}{2} \int_{\mathcal{C}} t^{1-2s} |\nabla U|^2 dx dt - \frac{\mu}{2} \int_{\Omega} |U(x,0)|^2 dx - \frac{1}{2^*(s)-\ep} \int_{\Omega} |U(x,0)|^{2^*(s)-\ep} dx,
\end{equation}
which is a variational functional for \eqref{eq-main-q}. Then, a variational functional for \eqref{u0inc} corresponds to \eqref{functional} with $\ep = 0$.

For a closed $\mathbb{Z}_2$ invariant set $X \subset H_{0}^{1}(t^{1-2s},\mathcal{C})$,
we denote by $\gamma (X)$ the topological genus of $X$ which stands for the smallest integer $m$
such that there is an odd map $\phi \in C (X, \mathbb{R}^m \setminus \{0\}).$
For $k \in \mathbb{N}$ we define a family of sets $F_k$ by
\begin{equation}
F_k = \{ X \subset H_{0}^{1}(t^{1-2s},\mathcal{C}): X \textrm{ is compact, $\mathbb{Z}_2$-invariant, and $\gamma (X) \geq k$\}}.
\end{equation}
Consider the minimax value $c_{k,\ep}= \inf_{X \in F_k} \max_{u \in X} I_{\ep} (u)$. Then for any small $\ep > 0$, $c_{k,\ep}$ is a critical value of $I_{\ep} (u)$, i.e.,
there exists a solution $u_{k, \ep}$ to \eqref{eq-main-q} such that $c_{\ep, k} = I_\ep(u_{k,\ep})$ (see e.g. \cite[Corollary 7.12]{G}).
It is also well known that $c_{k,\ep} \to \infty$ as $k \to \infty$.

We first show that for each fixed $k \in \mathbb{N}$, $c_{k,\ep}$ is uniformly bounded for $\ep >0$. For this we set
\begin{equation}
A_k := \inf_{X \in F_k} \max_{u \in X} \left[ \frac{1}{2} \int_{\mathcal{C}} t^{1-2s} |\nabla U|^2 dx dt - \frac{\mu}{2} \int_{\Omega} |U(x,0)|^2 dx - \frac{1}{2^*(s)} \int_{\Omega} |U(x,0)|^{\sigma} dx\right],
\end{equation}
where $\sigma = \frac{1}{2} \left(2 + 2^*(s)\right) < 2^*(s)$.
Take a constant $C>0$ such that $\frac{1}{2^*(s)-\ep} |u|^{2^*(s)-\ep} + C \geq \frac{1}{2^*(s)} |u|^{\sigma}$ for all $0 < \ep < \sigma$ and $u \in \mathbb{R}$.
Then it follows that $c_{k,\ep} \leq A_k + C$ for $\ep \in (0,\sigma)$.

On the other hand, it is easily derived from the identity $\langle I_{\ep}' (u_{k,\ep}), u_{k,\ep}\rangle \geq 0$ that
\begin{equation}
\int_{\mathcal{C}}t^{1-2s} |\nabla U_{k,\epsilon}|^2 dx dt \leq C I_{\epsilon} (U_{k,\epsilon}) = C \cdot c_{k,\epsilon},
\end{equation}
where $C$ depends only on $N$ and $s$. Then, we have from the uniform boundedness of $c_{\ep, k}$ that
\begin{equation*}
\sup_{\ep > 0}\|U_{k,\ep}\| = \sup_{\ep >0} \int_{\mathcal{C}} t^{1-2s} |\nabla U_{k,\ep}|^2\, dx dt < \infty
\end{equation*}
and, consequently Theorem \ref{thm-uniform-bound} implies that there is a subsequence of $\{ U_{k,\ep_n}\}_{n \geq 1}$ such that $U_{k,\ep_n}$ converges strongly to a function $U_k$ in $H^1_0(t^{1-2s},\mathcal{C})$.
It then easily follows that $U_k$ solves the problem \eqref{u0inc} and satisfies $I (U_k) = c_k = \lim_{n\to \infty}c_{k,\ep_n}$ up to a subsequence.
Moreover, a standard argument (see e.g. \cite{CSS}) applies to show that
either $\{c_k\}_{k \in \mathbb{N}}$ has infinite number of elements  or
there is $m \in \mathbb{N}$ such that $c_k = c$ for all $k \geq m$ and infinitely many critical points correspond to the energy level $c$.
Therefore the problem \eqref{eq-main} is proved to have infinitely many solutions. This completes the proof of Theorem \ref{thm-main-1}.
\end{proof}

\appendix

\section{Proof of Lemma \ref{lem-average}}
This section is devoted to prove Lemma \ref{lem-average}. As a preliminary step, we first prove the following result.
\begin{lem}\label{lem-chain}
For $f \geq 0$ we suppose that $U \in H_{0}^{1}(t^{1-2s},\mathcal{C}) \cap L^{\infty}(\mathcal{C})$ is a weak solution of
\begin{equation}\label{eq-u-f}
\left\{ \begin{array}{ll} \textrm{div} (t^{1-2s} \nabla U) = 0 &\quad \textrm{in}~\mathcal{C},
\\
\partial_{\nu}^s U (x,0) = f(x)&\quad \textrm{on}~\Omega \times \{0\}.
\end{array}
\right.
\end{equation}
For $\gamma \in (1,\frac{2N+2}{2N+1})$ there exists a constant $C=C(N,\gamma)$ such that, for any $y \in \Omega$, $d >0$ and $0< r < \frac{1}{2} \textrm{dist}(y,\partial \Omega)$ we have
\begin{equation*}
\begin{split}
&\left( d^{-\gamma}  \fint_{B^{N+1}(y,r)} t^{1-2s} (U-a)_+^{\gamma} (x,t) dx dt \right)^{(2-\gamma)/\gamma}
\\
&\quad\quad \leq C d^{-\gamma}  \fint_{B^{N+1} ((y,0),2r)} t^{1-2s} (U -a)_{+}^{\gamma}(x,t) dx dt + C d^{-1} r^{-N+2s} \int_{B^{N}(y,r)} f (x) dx
\end{split}
\end{equation*}
provided that
\begin{equation}\label{eq-lem-cond}
m_s (\{ (x,t) \in B^{N+1}((y,0),2r) : a< U(x,t) < d \} ) \leq \frac{d^{-\gamma}}{2} \int_{B^{N+1}(x,r)} t^{1-2s} (U -a)_{+}^{\gamma} dx dt.
\end{equation}
Here the constant $C$ is independent of $a$, $d$ and $r$.
\end{lem}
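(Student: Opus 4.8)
The plan is to deduce this single-step estimate from a Caccioppoli inequality combined with the weighted Sobolev inequality \eqref{eq-sobolev-weight} of Fabes--Kenig--Serapioni, in the spirit of the Kilpel\"ainen--Mal\'y argument as adapted by Cao--Peng--Yan \cite{CPY}. First I would reduce to $a=0$: since $r<\tfrac12\,\textrm{dist}(y,\partial\Omega)$, the half-ball $B^{N+1}(y,2r)$ does not meet $\partial_L\mathcal C$, and a constant has vanishing $s$-normal derivative, so $v:=U-a$ solves $\textrm{div}(t^{1-2s}\nabla v)=0$ in $B^{N+1}(y,2r)$ with $\partial_\nu^s v=f\ge0$ on $B^N(y,2r)\times\{0\}$. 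Testing the weak formulation of $v$ against $\phi\,g_\delta(v)$ with $g_\delta\uparrow\mathbf 1_{\{v>0\}}$ and letting $\delta\to0$ shows that $w:=v_+=(U-a)_+$, which is bounded by hypothesis, is a weak subsolution with boundary flux at most $f$, that is, $\int_{\mathcal C}t^{1-2s}\nabla w\cdot\nabla\phi\,dz\le C_s\int_{B^N(y,2r)}f\,\phi(\cdot,0)\,dx$ for every $\phi\ge0$ supported in $B^{N+1}(y,2r)$. Thus the lemma is reduced to an estimate for a nonnegative bounded subsolution.

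The analytic core is a scale-invariant reverse-H\"older inequality comparing $B^{N+1}(y,r)$ with $B^{N+1}(y,2r)$. Taking a cut-off $\eta\equiv1$ on $B^{N+1}(y,r)$, supported in $B^{N+1}(y,2r)$, with $|\nabla\eta|\le C/r$, and testing against $\phi=\eta^2\psi_d(w)$ for a bounded, nondecreasing, suitably regularized truncation $\psi_d$ that saturates at level $d$, one uses Young's inequality on the interior cross term and the bound $0\le\psi_d(w)(\cdot,0)\le d^{\gamma-1}$ on the boundary term, and then applies \eqref{eq-sobolev-weight} rescaled to $B^{N+1}(y,r)$ together with $m_s(B^{N+1}(y,\rho))\simeq\rho^{N+2-2s}$, to arrive at
\[
\left(\fint_{B^{N+1}(y,r)}t^{1-2s}\min(w,d)^{\frac{2(N+1)}{N}}\right)^{\frac{N}{N+1}}
\le C\fint_{B^{N+1}(y,2r)}t^{1-2s}\min(w,d)^{2}
+C\,d\,r^{2s-N}\int_{B^N(y,2r)}f\,dx ,
\]
the factor $r^{2s-N}$ coming from the $r^{-2}$ of $|\nabla\eta|^2$ divided by the weighted volume. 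The subtle point here is the set $\{w\ge d\}$, where $\min(w,d)$ is constant while $\nabla w$ need not vanish: it is precisely here that the a priori bound $U\in L^\infty(\mathcal C)$ is invoked, so that the regularization of $\psi_d$ is licit and the boundary term stays at the level $d\,\|f\|_{L^1}$ rather than $\|w\|_{L^\infty}\|f\|_{L^1}$.

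The hypothesis \eqref{eq-lem-cond} is then brought in to remove the contribution of the intermediate region. Splitting $\int_{B^{N+1}(y,r)}t^{1-2s}(U-a)_+^\gamma$ over $\{a<U<a+d\}$ and $\{U\ge a+d\}$, the first piece is at most $d^\gamma\,m_s(\{a<U<a+d\}\cap B^{N+1}(y,2r))$, which by \eqref{eq-lem-cond} does not exceed half of the whole integral and is absorbed; on the complementary set $(U-a)_+^\gamma\le d^{\gamma-2}(U-a)_+^2$ since $\gamma<2$, and there $\min(w,d)=d$. Inserting the reverse-H\"older inequality into this dichotomy, dividing by $m_s(B^{N+1}(y,r))$ and rearranging the powers of $d$ --- the truncation level turning the $\min(w,d)$-averages into $d^2$ times averages of $(U-a)_+^\gamma$, which upon taking the $(2-\gamma)/\gamma$-th power matches the two terms on the right --- gives the asserted inequality, the potential term collapsing to $C\,d^{-1}r^{2s-N}\|f\|_{L^1(B^N(y,r))}$.

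The step I expect to be the main obstacle is exactly this bookkeeping of the exponents of $d$ and $r$ along the Caccioppoli--Sobolev chain together with \eqref{eq-lem-cond}: one must choose $\psi_d$ so that, after using the measure condition, the left-hand side comes out with precisely the power $(2-\gamma)/\gamma$ while the right-hand side retains the clean scaling --- this is where the restriction $1<\gamma<\frac{2N+2}{2N+1}$ is used, guaranteeing $\gamma<2$ and that the self-improving step closes, as in \cite{CPY} --- and simultaneously control the cross term on $\{w\ge d\}$ while keeping every constant independent of $a$, $d$ and $r$; the remaining manipulations are routine.
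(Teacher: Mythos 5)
There is a genuine gap, and it sits exactly at the step you defer as ``bookkeeping''. Your scheme tests with the truncation $\psi_d(w)=\min(w,d)$ and applies the weighted Sobolev inequality at the fixed exponent $\tfrac{2(N+1)}{N}$, hoping afterwards to convert $\min(w,d)$-averages back into $(U-a)_+^{\gamma}$-averages with the power $(2-\gamma)/\gamma$ on the left. The conversion on the right-hand side is fine ($\min(w,d)^2\le d^{2-\gamma}w^{\gamma}$), but the left-hand side cannot be recovered: after \eqref{eq-lem-cond} reduces matters to the set $\{w\ge d\}$, you would need $w^{\gamma}\lesssim d^{\gamma}$ there, i.e. a bound on $w/d$, which is unavailable (the $L^{\infty}$ hypothesis is purely qualitative and any constant through $\|U\|_{\infty}$ destroys the required independence of $a,d,r$). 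Concretely, if $w\approx Md$ on a set of moderate relative weighted measure with $M$ huge, then $d^{-\gamma}\fint_{B^{N+1}(y,r)}t^{1-2s}w^{\gamma}$ is of size $\mu M^{\gamma}$ while $\bigl(d^{-\frac{2(N+1)}{N}}\fint t^{1-2s}\min(w,d)^{\frac{2(N+1)}{N}}\bigr)^{\frac{N}{N+1}}\approx\mu^{\frac{N}{N+1}}$, so the inequality ``lemma-LHS $\le C\,d^{-2}\times$(truncated Sobolev LHS)'' is simply false in the nontrivial regime; note also that your route never actually uses the threshold $\gamma<\tfrac{2N+2}{2N+1}$ in a structural way, which is a symptom of the mismatch of exponents. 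A second, related problem is the Caccioppoli step itself: testing with $\eta^2\min(w,d)$ leaves the cross term $2d\int_{\{w\ge d\}}t^{1-2s}\eta\,\nabla w\cdot\nabla\eta$, where $\nabla\min(w,d)=0$ but $\nabla w$ need not vanish; invoking $U\in L^{\infty}$ to ``make the regularization licit'' does not control this term by the truncated energy.

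The paper's proof avoids both difficulties by the Kilpel\"ainen--Mal\'y substitution rather than a truncation: with $q=\tfrac{2\gamma}{2-\gamma}$ it sets $w=\bigl(1+\tfrac{U_+}{d}\bigr)^{\gamma/q}-1$, so that $w^{q}\sim (U_+/d)^{\gamma}$ on $\{U_+\ge d\}$ (no information is lost above level $d$), and tests the equation with the \emph{bounded} function $V=\bigl(1-(1+\tfrac{U_+}{d})^{1-\gamma}\bigr)\eta^{2}$, which yields a weighted Caccioppoli estimate $\int t^{1-2s}(1+\tfrac{U_+}{d})^{-\gamma}|\nabla U_+|^{2}\eta^{2}\lesssim d^{2-\gamma}r^{-2}\int_{\{U>0\}}t^{1-2s}U_+^{\gamma}+d\int f$; since $|\nabla w|^{2}=c\,d^{-2}(1+\tfrac{U_+}{d})^{-\gamma}|\nabla U_+|^{2}$, this controls $\nabla w$ exactly, with boundary term $\le Cd\int f$ because $|V|\le\eta^{2}$. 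Applying \eqref{eq-sobolev-weight} to $\eta w$ --- which is where $\gamma<\tfrac{2N+2}{2N+1}$, i.e. $q<\tfrac{2(N+1)}{N}$, is needed --- produces the left-hand power $2/q=(2-\gamma)/\gamma$, and \eqref{eq-lem-cond} is used twice (to pass to $\{U_+>d\}$ and to absorb $\int t^{1-2s}w^{2}|\nabla\eta|^{2}$ via H\"older). Your overall skeleton (reduction to $a=0$, cutoff, weighted Sobolev, use of the measure condition) is the right one, but to close the argument you must replace the truncation $\min(w,d)$ by this power-type substitution and the associated bounded test function.
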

\begin{proof}
In the proof, the notation $C$ denotes a generic constant independent of $a$, $d$ and $r$ that may change line by line. Without loss of generality, we may assume that $a=0$. By assumption \eqref{eq-lem-cond} we have
\begin{equation*}
\begin{split}
\int_{\{ z \in B_x^{N+1} (r) : U_{+} (z) < d\}} t^{1-2s} U_{+}^{\gamma} (z) dz &\leq d^{\gamma} m_s\left( \{ x \in B^{N+1}(x,r): 0<U < d\}\right)
\\
&\leq \frac{1}{2} \int_{B^{N+1}(z,r)} t^{1-2s} U_{+}^{\gamma}(z)dz,
\end{split}
\end{equation*}
where $m_s$ is the weighted volume defined in \eqref{eq-wm}. It gives
\begin{equation*}
\int_{\{ z \in B^{N+1}(x,r): 0<U (z) <d \}} t^{1-2s} U_{+}^{\gamma}(z) dz \leq  2 \int_{\{ z \in B^{N+1}(x,r): U >d\}} t^{1-2s} U_{+}^{\gamma}(z) dz.
\end{equation*}
Set $q=\frac{2\gamma}{2-\gamma}$ and
\begin{equation*}
w = \left( 1+ \frac{U_+}{d}\right)^{\gamma/q}-1 .
\end{equation*}
We can find a constant $C>0$ such that $\left(\frac{U_+}{d}\right)^{\gamma} \leq C w^{q}$ when $\frac{U_+}{d} \geq 1$. Using this we have
\begin{equation}\label{eq-uf-2}
\int_{\{ z \in B^{N+1}(x,r): U >d\}} t^{1-2s} U_{+}^{\gamma}(z) dz
\leq C d^{\gamma} \int_{B^{N+1}(x,r)} t^{1-2s} w^q (z) dz.
\end{equation}
Let $\eta \in C^{\infty}(\mathbb{R}^{N+1})$ be a cut-off function supported on $B^{N+1}({x},2r)$ such that $ \eta (z) = 1$ on $B^{N+1}(x,r)$ and $|\nabla \eta (z)|\leq C/r$. As $\gamma < \frac{2N+2}{2N+1}$, it holds that $q < \frac{2(N+1)}{N}$. Hence we may apply the weighted Sobolev inequality \eqref{eq-sobolev-weight} to get
\begin{equation}\label{eq-five}
\begin{split}
&\left( r^{-(N+2-2s)} \int_{B^{N+1}(x,r)} t^{1-2s} w^{q} dz \right)^{2/q}
\\
& \leq \left( r^{-(N+2-2s)} \int_{B^{N+1}(x,2r)} t^{1-2s} (\eta w)^q dz \right)^{2/q}
\\
&\leq r^{-(N+2-2s)} r^2 \int_{B^{N+1}(x,2r)} t^{1-2s}|\nabla (\eta w)|^2 dz \leq 2 r^{-(N-2s)} \int_{B^{N+1}(x,2r)}t^{1-2s} (|\nabla w\cdot \eta|^2 + |w \nabla \eta|^2 ) dz.
\end{split}
\end{equation}
We calculate
\begin{equation*}
\nabla w = \frac{\gamma}{qd} \left( 1+ \frac{U_+}{d}\right)^{\gamma/q -1} \nabla U_{+}.
\end{equation*}
In order to get a bound of $\int t^{1-2s} |\nabla w \cdot \eta|^2 dz$
 we take $V:= \left(1-\left( 1+\frac{U_{+}}{d} \right)^{2 \frac{\gamma}{q} -1} \right) \eta^2$ as a test function. Multiplying \eqref{eq-u-f} by $V$ and using Young's inequality we get
\begin{equation}\label{eq-uf-1}
\begin{split}
\int_{B^{N+1}(x,2r)} t^{1-2s} \nabla U \cdot \nabla V dz = \int_{B^{N} (x,2r)} \partial_{\nu}^s U (y,0) V(y,0) dy = C_s \int_{B^N (x,2r)} f(y) V(y,0) \eta^2 (y) dy.
\end{split}
\end{equation}
Note that
\begin{equation}
\nabla V = \left[ - \left( \frac{2\gamma}{q}-1\right) \left( 1 + \frac{U_{+}}{d}\right)^{\frac{2\gamma}{q} -2} \frac{\nabla U_{+}}{d} \right] \eta^2 + 2 \left( 1 - \left( 1 + \frac{U_+}{q}\right)^{\frac{2\gamma}{q} -1}\right) \eta \nabla \eta,
\end{equation}
 and we have $\frac{2\gamma}{q}-1 = -\gamma+1$. Injecting these equalities into  \eqref{eq-uf-1} we have
 \begin{equation}
\begin{split}
&\frac{(1-\gamma)}{d} \int_{B^{N+1}(x,2r)} t^{1-2s} |\nabla U_{+}|^2 \left( 1 + \frac{U_{+}}{d}\right)^{-\gamma} \eta^2 dy dt
\\
& = 2 \int_{B^{N+1}(x,2r)} t^{1-2s} \nabla U_{+}\left( 1 - \left( 1+\frac{U_{+}}{q}\right)^{-\gamma +1}\right) \eta \nabla \eta dy dt - C_s \int_{B^{N}(x,2r)} f (y) V(y,0) \eta^2 (y) dy
\end{split}
\end{equation}
As $\left( 1 - \left( 1+\frac{U_{+}}{q}\right)^{-\gamma +1}\right) \eta \leq 1$ we dedcue from the above identity that
 \begin{equation*}
\begin{split}
&\int_{B^{N+1}(x,2r)} t^{1-2s} |\nabla U_{+}|^2 \left( 1 + \frac{U_{+}}{d}\right)^{-\gamma} \eta^2 dy dt
\\
&\leq {Cd}\int_{B^{N+1}(x,2r)} t^{1-2s} |\nabla U_{+}| |\nabla \eta| dy dt  + {C d}\int_{B^{N}(x,2r)} f (y) V(y,0) \eta^2 (y) dy
\\
&\leq \frac{1}{2} \int_{B^{N+1}(x,2r)} t^{1-2s} |\nabla U_{+}|^2 \left( 1 + \frac{U_{+}}{d}\right)^{-\gamma} dy dt + C d^2 \int_{B^{N+1}(x,2r)} t^{1-2s} \left( 1 + \frac{U_+}{d}\right)^{\gamma} |\nabla \eta|^2 dz
\\
&\qquad + C d \int_{B^N (x,2r)} f (y) V(y,0) dy,
\end{split}
\end{equation*}
where we used Young's inequality in the second inequality. We can write this as
\begin{equation}\label{eq-chian}
\begin{split}
\int_{B^{N+1}(x,2r)} t^{1-2s} |\nabla U_{+}|^2 \left( 1+\frac{U_{+}}{d}\right)^{-\gamma}\eta^2 dz \leq & ~C d^2 \int_{B^{N+1}(x,2r)} t^{1-2s} \left( 1+ \frac{U_{+}}{d}\right)^{\gamma} |\nabla \eta|^2 dz
\\
&\qquad + C d \int_{B^{N}(x,2r)} f (y) V(y,0) dy.
\end{split}
\end{equation}
To estimate the first term in the right hand side, applying $|\nabla \eta| \leq C/r$ and condition \eqref{eq-lem-cond} once more, we deduce
\begin{equation*}
\begin{split}
&\int_{B^{N+1}(x,2r)} t^{1-2s} \left( 1+ \frac{U_{+}}{d}\right)^{\gamma} |\nabla \eta|^2 dz
\\
&~\quad \leq ~\frac{C}{r^2}\int_{B^{N+1}(x,2r)} t^{1-2s} \left(1 + \frac{U_{+}}{d}\right)^{\gamma} dx  \leq ~ \frac{C ~d^{-\gamma}}{r^2} \int_{B^{N+1}(x,2r) \cap \{ U>0\}} t^{1-2s} U^{\gamma} dz.
\end{split}
\end{equation*}
Plugging this into \eqref{eq-chian} we have
\begin{equation}\label{eq-uf-3}
\begin{split}
&\int_{B^{N+1}(x,2r)} t^{1-2s} |\nabla U_{+}|^2 \left( 1+\frac{U_{+}}{d}\right)^{-\gamma}\eta^2 dz 
\\
&\quad\leq\frac{C ~d^{2-\gamma}}{r^2} \int_{B^{N+1}(x,2r) \cap \{ U>0\}} t^{1-2s} U^{\gamma} dz + C d \int_{B^{N}(x,2r)} f (y) V(y,0) dy.
\end{split}
\end{equation}
On the other hand, we deduce from \eqref{eq-uf-2} and \eqref{eq-five} that
\begin{equation}
\begin{split}
&\left( d^{-\gamma} \int_{\{z \in B^{N+1}(x,r): U >d\}} t^{1-2s} U_{+}^{\gamma}(z) dz \right)^{2/q} 
\\
&\leq 2 r^{-(N-2s)} \int_{B^{N+1}(x,2r)} t^{1-2s} ( |\nabla w \cdot \eta|^2 + |w \nabla \eta|^2 ) dx
\\
&\leq \frac{C r^{-(N-2s)}}{d^2} \int_{B^{N+1}(x,2r)} t^{1-2s} \left( 1 + \frac{U_{+}}{d}\right)^{-\gamma} |\nabla U_{+}|^2 dz + 2r^{-(N-2s)} \int_{B^{N+1}(x,2r)} t^{1-2s} w^2 |\nabla \eta|^2 dz.
\end{split}
\end{equation}
Injecting \eqref{eq-uf-3} into the above inequality, we get
\begin{equation}\label{eq-final}
\begin{split}
&\left( d^{-\gamma}\int_{B^{N+1}(x,r)} t^{1-2s} U_{+}^{\gamma}(z) dz\right)^{2/q}
\\
&\leq \frac{C r^{-(N-2s)}}{d^2} \left[ r^{-2} d^{2-\gamma} \int_{B^{N+1}(x,r)}  t^{1-2s}U_{+}^{\gamma} dz + d \int_{B^n (x,r)} f (y) V(y,0) dy\right]
\\
&\qquad + r^{-(N-2s+2)} \int_{B^{N+1}(x,2r)} t^{1-2s} w^2 dz.
\end{split}
\end{equation}
The last term can be estimated by using H\"older's inequality and \eqref{eq-lem-cond} in the following way
\begin{equation*}
\begin{split}
\int_{B^{N+1}(x,2r)} t^{1-2s} w^2 dz &\leq \left( \int_{B^{N+1}(x,2r)} t^{1-2s} w^q dz \right)^{2/q} \left( m_s ( B(x,2r) \cap \{U >0\})\right)^{1-2/q}
\\
& \leq d^{-\gamma} \int_{B^{N+1}(x,2r) \cap \{U >0\}} t^{1-2s} U^{\gamma} dx.
\end{split}
\end{equation*}
Inserting this into \eqref{eq-final} we get the desired inequality. The proof is completed.
\end{proof}
\begin{proof}[Proof of Lemma \ref{lem-average}]
We denote $r_k = 2^{-k}$ for $k \in \mathbb{N}$. Take $\delta >0$ such that $\delta \leq \frac{2 m_s (B^{N+1}(x,r_{k}))}{m_s ( B^{N+1}(x, r_{k+1})) }$ whose value is independent of $k \in \mathbb{N}$. We set
\begin{equation*}
a_{k+1} = a_k + \left( \frac{1}{\delta} \fint_{B^{N+1}(x,r_{k+1})} t^{1-2s} (U-a_{k})_{+}^{\gamma} dx dt \right)^{1/\gamma}.
\end{equation*}
Let $d_k = a_{k+1} - a_k$. Then we have
\begin{equation*}
\begin{split}
\frac{1}{d_k^{\gamma}} \int_{B^{N+1} (x,r_{k+1})} t^{1-2s} (U -a_{k})_{+}^{\gamma} dx dt & = \delta~\cdot  m_s \big(B^{N+1} (x,r_{k+1})\big)
\\
& \geq 2 m_s \big(B^{N+1} (x,r_k)\big)
\\
& \geq 2 m_s \big( \{ (x,t) \in B^{N+1}(x,r_k): U(x,t) > a_k\}\big).
\end{split}
\end{equation*}
By  Lemma \eqref{lem-chain} we get
\begin{equation*}
\begin{split}
& \left( d_k^{-\gamma} r_k^{-(N+2-2s)} \int_{B^{N+1}(x,r_k)} t^{1-2s} (U-a_k)_{+}^{\gamma} (x,t) dx dt \right)^{2/q}
\\
& \qquad \leq C d_{k}^{-\gamma} r_k^{-(N+2-2s)} \int_{B^{N+1}(x,2r_k)} t^{1-2s} (U-a_{k})_{+}^{\gamma} (x,t) dxdt + C d_k^{-\gamma} r_k^{-(N-2s)} \int_{B^{N+1}(x,r_k)} f (y)dy
\\
&\qquad \leq C d_k^{-\gamma} r_k^{-(N+2-2s)} \int_{B^{N+1}(x,2r_k)} t^{1-2s} (U-a_{k-1})^{\gamma}(x,t) dxdt + C d_{k}^{-\gamma} r_k^{-(N-2s)} \int_{B^{N+1}(x,r_k)} f(y)dy
\\
&\qquad = C \delta \left[\frac{a_{k}-a_{k-1}}{a_{k+1} -a_{k}}\right]^{\gamma} + C d_k^{-1} r_k^{-(N-2s)} \int_{B^{N+1}(x,r_k)} f(y) dy.
\end{split}
\end{equation*}
Using the definition of $d_k$ we obtain
\begin{equation*}
\delta^{2/ q} \leq C \delta\left[\frac{a_k -a_{k-1}}{a_{k+1}-a_k}\right]^{\gamma} + C d_k^{-1} r_k^{-(N-2s)} \int_{B^{N+1}(x,r_k)} f(y) dy.
\end{equation*}
Note that $2/q =\frac{2-\gamma}{\gamma} <1$. We choose $\delta >0$ sufficiently small depending on $C$. Then it  follows that
\begin{equation*}
a_{k+1}-a_k \leq \frac{1}{2} (a_k - a_{k-1}) + C r_k^{-(N-2s)} \int_{B^{N+1}(x,r_k)} f(y) dy.
\end{equation*}
Summing up this, we have
\begin{equation*}
\begin{split}
a_k &\leq a_1 + C \sum_{j=1}^{k} r_j^{-(N-2s)} \int_{B^{N+1}(x,r_j) } f(y) dy
\\
& \leq a_1 +C \int^{1}_{r_k} \left( \frac{1}{w^{N-2s}} \int_{B^{N}(x,w)} f(y) dy\right) \frac{dw}{w}.
\end{split}
\end{equation*}
For given $r>0$ we take $k \in \mathbb{N}$ such that $r_{k+1} \leq r < r_k$. Then it follows from the above inequality that
\begin{equation*}
\left( \fint_{B^{N+1}(x,r)} t^{1-2s} U^{\gamma} ~dxdt\right)^{1/\gamma} \leq \fint_{B^{N+1}(x,1)} t^{1-2s} U^{\gamma}~ dxdt ~+ C \int^{1}_{r} \left( \frac{1}{w^{N-2s}} \int_{B^{N}(x,w)} f(y) dy\right) \frac{dw}{w}.
\end{equation*}
It completes the proof.
\end{proof}

\section{A Moser's iteration argument}
\begin{lem}\label{lem-harnack}
Let $r >1$ and consider a function $U \in D^{1}(t^{1-2s},\mathbb{R}^{N+1}_+)$ satisfying
\begin{equation}\label{eq-harnack}
\left\{ \begin{array}{ll}
\textrm{div}(t^{1-2s} \nabla U) = 0 &\quad \textrm{in}~B^{N+1}(0,5),
\\
\partial_{\nu}^s U = a (x) U &\quad \textrm{on}~B^N (0,5).
\end{array}
\right.
\end{equation}
Then, for each $q>1$, there exists a number $\epsilon = \epsilon (q)>$ such that, if $\| a\|_{L^{\frac{N}{2s}}A_0^N (\frac{1}{2},4)} \leq \epsilon$, then the following holds
\begin{equation*}
\| U\|_{L^q (A_0^{N+1}(1,2))} + \| U(\cdot,0)\|_{L^q (A_0^N (1,2))} \leq C \| U \|_{L^{r} (A_0^{N+1}(\frac{1}{2},4))},
\end{equation*}
where $C$ is a constant depending on $q$ and $\gamma$.
\end{lem}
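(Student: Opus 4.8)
The plan is a Moser iteration performed simultaneously in the interior (via the weighted Sobolev inequality \eqref{eq-sobolev-weight}) and on the boundary trace (via the Sobolev trace inequality \eqref{eq-sharp-trace-2}), in which the scaling--critical boundary term produced by the potential $a\in L^{N/2s}$ is absorbed at every stage thanks to the smallness hypothesis. Set $\chi=\frac{N+1}{N}>1$, let $k_0=k_0(N,s,r,q)$ be the least positive integer for which both $r\chi^{k_0}\ge q$ and $\frac{N}{N-2s}\,r\chi^{k_0-1}\ge q$, choose a decreasing chain of annuli $A^{N+1}_0(\tfrac12,4)=\mathcal A_0\supset\mathcal A_1\supset\cdots\supset\mathcal A_{k_0}=A^{N+1}_0(1,2)$, and cut-offs $\phi_k\in C_0^\infty(\R^{N+1})$ with $\phi_k\equiv1$ on $\mathcal A_{k+1}$, $0\le\phi_k\le1$ and $|\nabla\phi_k|\le C_{k_0}$.

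For the basic step I put $p_k=r\chi^k$ and $\beta_k=\tfrac{p_k}{2}-1$, so that $\beta_0\in(-\tfrac12,0)$ is fixed and $\beta_k\uparrow\infty$, and I test \eqref{eq-harnack} against $\phi_k^2|U|^{2\beta_k}U$ (regularising $|U|$ away from its zero set when $\beta_k<0$ and passing to the limit). Writing $w_k=|U|^{\beta_k}U$, a standard Caccioppoli computation, the same one used in the boundedness argument inside the proof of Lemma~\ref{lem-entire-decay}, yields
\[
\int_{\R^{N+1}_+}t^{1-2s}|\nabla(\phi_kw_k)|^2\,dz\le C_*(\beta_k)\Big(\int_{\R^{N+1}_+}t^{1-2s}w_k^2|\nabla\phi_k|^2\,dz+\int_{\R^N}|a|\,\phi_k^2w_k^2(x,0)\,dx\Big),
\]
where $C_*(\beta)$ is polynomial in $\beta$ for $\beta\ge0$ and, for $\beta\in(-\tfrac12,0)$, stays bounded as long as $\beta$ is bounded away from $-\tfrac12$. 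H\"older's inequality and \eqref{eq-sharp-trace-2} give
\[
\int_{\R^N}|a|\,\phi_k^2w_k^2\le\|a\|_{L^{N/2s}(A^N_0(1/2,4))}\Big(\int_{\R^N}|\phi_kw_k|^{2^*(s)}\Big)^{2/2^*(s)}\le\frac{\mathcal S_{N,s}^2}{C_s}\,\epsilon\int_{\R^{N+1}_+}t^{1-2s}|\nabla(\phi_kw_k)|^2\,dz,
\]
so declaring $\epsilon=\epsilon(q):=\tfrac12\,\tfrac{C_s}{\mathcal S_{N,s}^2}\big(\max_{0\le k\le k_0}C_*(\beta_k)\big)^{-1}$ absorbs this term for every $k\le k_0$ and leaves the purely interior Caccioppoli inequality $\int t^{1-2s}|\nabla(\phi_kw_k)|^2\le 2C_*(\beta_k)C_{k_0}^2\int_{\mathcal A_k}t^{1-2s}|U|^{p_k}$.

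Applying \eqref{eq-sobolev-weight} to $\phi_kw_k$ (extended evenly across $\{t=0\}$) turns this into
\[
\|U\|_{L^{p_{k+1}}(\mathcal A_{k+1},\,t^{1-2s})}\le\big(C\,C_{k_0}^2\,C_*(\beta_k)\big)^{1/p_k}\,\|U\|_{L^{p_k}(\mathcal A_k,\,t^{1-2s})},\qquad p_{k+1}=\tfrac{N+1}{N}\,p_k,
\]
while applying \eqref{eq-sharp-trace-2} to the same function gives the analogous bound for $\|U(\cdot,0)\|_{L^{\frac{N}{N-2s}p_k}(\partial_b\mathcal A_{k+1})}$. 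Only the interior norms feed the next step; iterating the displayed inequality for $k=0,\dots,k_0-1$ and invoking the trace bound at the last step, the finite product of the step constants, which depends only on $N,s,r,q$, gives $\|U\|_{L^{p_{k_0}}(A^{N+1}_0(1,2),\,t^{1-2s})}+\|U(\cdot,0)\|_{L^{\frac{N}{N-2s}p_{k_0-1}}(A^N_0(1,2))}\le C\,\|U\|_{L^r(A^{N+1}_0(1/2,4),\,t^{1-2s})}$, and since both exponents on the left are $\ge q$, a final application of H\"older's inequality on the bounded annuli $A^{N+1}_0(1,2)$ and $A^N_0(1,2)$ reduces them to the desired $L^q$ norms.

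The main obstacle is exactly this absorption: since $a$ lies in the scale--invariant space $L^{N/2s}(\R^N)$, the boundary term $\int|a|\phi^2w^2$ is of the same order as the Dirichlet energy $\int t^{1-2s}|\nabla(\phi w)|^2$, and the Caccioppoli constant $C_*(\beta_k)$ multiplying it grows with $\beta_k$ — which is precisely why the threshold $\epsilon$ must be allowed to depend on $q$: fixing $q$ bounds the finitely many $\beta_k$'s that occur, so a single $\epsilon(q)$ works for all of them. A minor additional technicality is the use of the exponents $\beta_k\in(-\tfrac12,0)$ required to start the iteration from $r<2$, handled by the standard regularisation of $|U|$ near $\{U=0\}$; the remaining work is routine bookkeeping of constants.
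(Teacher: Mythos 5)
Your proof is correct and follows essentially the same route as the paper's Appendix B argument: a Moser iteration with annular cutoffs in which the boundary term $\int a\,|U|^{p_k}\phi_k^2\,dx$ is absorbed via H\"older and the trace inequality \eqref{eq-sharp-trace-2} using the smallness of $\|a\|_{L^{N/2s}}$ (which is exactly why $\epsilon$ may depend on $q$ through the finitely many exponents used), followed by the weighted Sobolev inequality \eqref{eq-sobolev-weight} to raise the interior exponent and the trace inequality for the boundary norm. The only real difference is that you make explicit the start of the iteration when $r<2$ (negative $\beta_0$ with a regularisation near $\{U=0\}$), a point the paper leaves implicit in its ``suitable choice of $\beta$ and $\phi$ at each step.''
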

\begin{proof}
We first take a smooth function $\phi \in C_c^{\infty}(B^{N+1} (0,5))$. Multiplying the function $|U|^{\beta-1}U \phi$ to \eqref{eq-harnack} we get
\begin{equation*}
\begin{split}
0 &= \int_{\mathbb{R}^{N+1}_{+}} \textrm{div}(t^{1-2s} \nabla U) |U|^{\beta-1}U \phi^2 dx dt
\\
&= - \int_{\mathbb{R}^{N+1}_{+}} t^{1-2s} \nabla U \nabla (|U|^{\beta-1}U\phi^2) dx dt + \int_{\mathbb{R}^{N}} (\partial_{\nu}^s U) |U|^{\beta-1}U \phi^2 (x, 0) dx.
\end{split}
\end{equation*}
A simple computation gives
\begin{equation}\label{eq-harnack-1}
\begin{split}
&\int_{\mathbb{R}^N} a(x) |U|^{\beta+1} \phi^2 (x,0) dx
\\
&= \frac{4\beta}{(1+\beta)^2} \int_{\mathbb{R}^{N+1}_{+}} t^{1-2s} |\nabla (U^{\frac{\beta+1}{2}})|^2 \phi^2 dx dt + \int_{\mathbb{R}^{N+1}} t^{1-2s} (\nabla U) |U|^{\beta} (2 \phi \nabla \phi )\, dx dt.
\end{split}
\end{equation}
Using Young's inequality we see
\begin{equation}\label{eq-harnack-2}
|(\nabla U) |U|^{\beta-1}U \phi\nabla \phi| = \frac{2}{\beta+1} |(\nabla |U|^{\frac{\beta+1}{2}} \phi) ( |U|^{\frac{\beta+1}{2}}\nabla \phi)| \leq \frac{1}{\beta+1}\left( |(\nabla |U|^{\frac{\beta+1}{2}}) \phi|^2 + ||U|^{\frac{\beta+1}{2}} \nabla \phi|^2 \right).
\end{equation}
We combine this inequality with  \eqref{eq-harnack-1} to deduce that
\begin{equation}\label{eq-harnack-3}
\begin{split}
&\int_{\mathbb{R}^N} a(x) |U|^{\beta+1} \phi^2 (x,0)dx + \frac{1}{\beta+1} \int_{\mathbb{R}^{N+1}_{+}} t^{1-2s} |U|^{\beta+1} |\nabla \phi|^2 dx dt
\\
&\quad\quad\quad\quad\quad\quad\quad\quad\quad\quad\quad\quad\geq \frac{3\beta}{(\beta+1)^2} \int_{\mathbb{R}^{N+1}_{+}} t^{1-2s} |\nabla (U^{\frac{\beta+1}{2}})|^2 \phi^2 ~ dx dt.
\end{split}
\end{equation}
Note that $(\nabla |U|^{\frac{\beta+1}{2}}) \phi = \nabla (|U|^{\frac{\beta+1}{2s}} \phi) - |U|^{\frac{\beta+1}{2}} \nabla \phi $. Then, using an elementary inequality $(a-b)^2  \geq \frac{a^2}{2} - 7b^2$ we deduce from \eqref{eq-harnack-3} that
\begin{equation}\label{eq-harnack-4}
\begin{split}
\int_{\mathbb{R}^N} a(x) |U|^{\beta+1} \phi^2 (x,0) ~dx +  & \frac{30 \beta}{(1+\beta)^2} \int_{\mathbb{R}^{N+1}_{+}} t^{1-2s} (|U|^{\frac{\beta+1}{2}} \nabla \phi)^2 ~ dx dt
\\
&\qquad \qquad \quad \geq \frac{2\beta}{(1+\beta)^2} \int_{\mathbb{R}^{N+1}_{+}} t^{1-2s} (\nabla (|U|^{\frac{\beta+1}{2}}\phi))^2 ~dx dt.
\end{split}
\end{equation}
The left-hand side can be estimated using H\"older's inequality and the Sobolev-trace inequality as follows.
\begin{equation*}
\begin{split}
\int_{\mathbb{R}^N} a(x)U^{\beta+1} \phi^2 (x,0) dx &\leq \| a\|_{\frac{N}{s}} \| U^{\frac{\beta+1}{2}} \phi (\cdot,0)\|_{\frac{2N}{N-2s}}^{2}
\\
& \leq C \epsilon   \int_{\mathbb{R}^{N+1}_{+}} t^{1-2s} |\nabla (U^{\frac{\beta+1}{2}} \phi)|^2 dx dt.
\end{split}
\end{equation*}
We assume that $\epsilon < \frac{1}{C \beta}$. Then it follows from the above inequality and \eqref{eq-harnack-4} that
\begin{equation*}
\frac{30 \beta}{(1+\beta)^2} \int_{\mathbb{R}^{N+1}_{+}} t^{1-2s} |U^{\frac{\beta+1}{2}} \nabla \phi|^2 dx dt \geq \frac{\beta}{(1+\beta)^2} \int_{\mathbb{R}^{N+1}_{+}} t^{1-2s}|\nabla (U^{\frac{\beta+1}{2}}\phi)|^2 dx dt.
\end{equation*}
Using the weighted Sobolev inequality and the Sobolev trace inequality we deduce that
\begin{equation}\label{eq-iterative}
\begin{split}
&\frac{30 \beta}{(1+\beta)^2} \int_{\mathbb{R}^{N+1}_{+}} t^{1-2s} |U^{\frac{\beta+1}{2}} \nabla \phi|^2 dx dt
\\
&\quad\quad \geq
\frac{C\beta}{(1+\beta)^2} \left[ \left( \int_{\textrm{supp}~\phi} t^{1-2s} |U|^{(\beta+1)\gamma} dx dt \right)^{\frac{2}{\gamma}} + \left( \int_{\textrm{supp}~\phi}  |U|^{\frac{2N}{N-2s}\cdot\frac{\beta+1}{2}} (x,0)
dx\right)^{\frac{N-2s}{N}} \right],
\end{split}
\end{equation}
where $\gamma =\frac{2(N+1)}{N}$.
We use this estimate iteratively. For any given $q>1$, applying \eqref{eq-iterative} with a suitable choice of $\beta$ and $\phi$ at each step, and H\"older's inequality we can deduce that
\begin{equation}
\left\| U\right\|_{L^q (A_0^{N+1}(1,2))} + \left\| U(\cdot,0)\right\|_{L^q (A_0^N (1,2))} \leq C \left\| U\right\|_{L^{r}(A_0^{N+1}(\frac{1}{2},4))}.
\end{equation}
The proof is complete.
\end{proof}

\section{Local Pohozaev identity}
For $D \subset \mathbb{R}^{N+1}_{+}$ we define the following sets $\partial_{+} D = \{ (x,t) \in \mathbb{R}^{N+1}_{+} : (x,t) \in \partial D \quad \textrm{and}\quad t >0 \}$,
 and  $\partial_{b} D = \partial D \cap \mathbb{R}^N \times \{0\}.$
We state the following.
\begin{lem} Let $E \subset \mathbb{R}^{N+1}_{+}$ and we assume that a function $U$ is a solution of
\begin{equation}
\left\{\begin{array}{ll}
\textrm{div}(t^{1-2s} \nabla U) = 0 &\quad \textrm{in}~E,
\\
\partial_{\nu}^{s} U = f (U)&\quad \textrm{on}~\partial_b E.
\end{array}
\right.
\end{equation}
Then, for $D \subset E$ we have the following identity.
\begin{equation}\label{eq-local}
\begin{split}
&\ C_s \left\{N \int_{\partial_{b} D} F(U) dx - \left(\frac{N-2s}{2}\right) \int_{ \partial_{b} D} U f(U) dx\right\}\\
&= \int_{ \partial_{+} D} t^{1-2s}\left<(z-x_j , \nabla U) \nabla U - (z-x_j) \frac{|\nabla U|^2}{2}, \nu\right> dS
\\
& \quad + \left(\frac{N-2s}{2}\right) \int_{\partial_{+} D} t^{1-2s} U \frac{\partial U}{\partial \nu} dS +  \int_{\partial\partial_{b} D} (x,\nu) F(U) dS_x,
\end{split}
\end{equation}
where $F (s) = \int^{s}_0 f (t) dt$.
\end{lem}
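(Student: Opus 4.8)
The plan is to derive the identity from the Rellich--Pohozaev multiplier $(z-x_j)\cdot\nabla U$, which is the standard device for scaling identities, adapted to the degenerate operator $\mathrm{div}(t^{1-2s}\nabla \cdot)$ and to the two pieces of the boundary $\partial D = \partial_+ D \cup \partial_b D$. First I would multiply the equation $\mathrm{div}(t^{1-2s}\nabla U)=0$ by $(z-x_j)\cdot\nabla U$ and integrate over $D$. Integrating by parts once moves the divergence onto the multiplier; the boundary term produced is exactly $\int_{\partial D} t^{1-2s}\langle (z-x_j)\cdot\nabla U\rangle\,\nabla U\cdot\nu\,dS$. The interior term that remains involves $t^{1-2s}\nabla U \cdot \nabla\big((z-x_j)\cdot\nabla U\big)$, which I would expand using $\nabla\big((z-x_j)\cdot\nabla U\big) = \nabla U + (z-x_j)\cdot\nabla(\nabla U)$. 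The first part contributes $\int_D t^{1-2s}|\nabla U|^2$, while the second part is rewritten as $\tfrac12 (z-x_j)\cdot\nabla\big(|\nabla U|^2\big)$, and then one integrates by parts again. Here the weight must be tracked carefully: writing $(z-x_j) = (x-x_j', t)$, the divergence of $t^{1-2s}(z-x_j)$ equals $(N+1)t^{1-2s} + (1-2s)t^{1-2s} = (N+2-2s)t^{1-2s}$, so the volume term $\int_D t^{1-2s}|\nabla U|^2$ reappears with the factor $\tfrac{N+2-2s}{2}$, and after combining, the interior contributions collapse to a clean multiple $\big(\tfrac{N+2-2s}{2}-1\big)\int_D t^{1-2s}|\nabla U|^2 = \tfrac{N-2s}{2}\int_D t^{1-2s}|\nabla U|^2$, up to a boundary term $\tfrac12\int_{\partial D} t^{1-2s}|\nabla U|^2 (z-x_j)\cdot\nu\,dS$.

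Next I would run a companion computation: multiply the equation by $\tfrac{N-2s}{2}U$ and integrate over $D$, which produces $\tfrac{N-2s}{2}\int_D t^{1-2s}|\nabla U|^2$ (exactly matching the interior term from the first computation) plus the boundary term $\tfrac{N-2s}{2}\int_{\partial D} t^{1-2s} U\,\partial_\nu U\,dS$. Subtracting this from the first identity cancels all interior integrals and leaves only boundary integrals over $\partial D$. I would then split each $\partial D$ integral into its $\partial_+ D$ part (where $t>0$) and its $\partial_b D$ part (which lies in $\{t=0\}$), noting that on $\partial_b D$ the outward normal is $\nu = (-e_{N+1})$-direction, so in each of the weighted surface integrals the factor $t^{1-2s}$ forces the limit $\lim_{t\to0^+}t^{1-2s}\partial_t U$ to enter. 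That is precisely where the Neumann-type condition $\partial_\nu^s U = f(U)$, i.e. $-C_s^{-1}\lim_{t\to0^+}t^{1-2s}\partial_t U = f(U)$, converts the boundary terms on $\partial_b D$ into integrals of $f(U)$ against $U$ and against $(x-x_j')\cdot\nabla_x U$; the latter, using the chain rule $f(U)\,(x-x_j')\cdot\nabla_x U = (x-x_j')\cdot\nabla_x\big(F(U)\big)$ and an integration by parts on the $N$-dimensional set $\partial_b D$, gives $N\int_{\partial_b D} F(U)\,dx$ minus a codimension-two boundary term $\int_{\partial\partial_b D}(x,\nu)F(U)\,dS_x$. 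Collecting everything and multiplying through by $C_s$ yields exactly \eqref{eq-local}.

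The main obstacle I anticipate is making the boundary passage to $\{t=0\}$ rigorous: the weight $t^{1-2s}$ degenerates or blows up there, so the surface integrals on $\partial_b D$ must be interpreted as $\lim_{\tau\to0^+}$ of integrals over $\{t=\tau\}$, and one needs enough regularity of $U$ near $\Omega\times\{0\}$ (e.g.\ $t^{1-2s}\partial_t U$ extending continuously, plus $U$ Lipschitz in $x$ up to the boundary) to justify both the limit and the integration by parts in the tangential variables. I would handle this by first proving the identity on the truncated domain $D\cap\{t>\tau\}$ — where all the manipulations above are elementary since the operator is uniformly elliptic there — and then letting $\tau\to0^+$, invoking the regularity theory for the extension problem (the terms $t^{1-2s}|\nabla U|^2$ and $t^{1-2s}U\partial_t U$ over $\{t=\tau\}$ converge to $C_s f(U)U$-type and $C_s f(U)(x-x_j')\cdot\nabla U$-type expressions on $\partial_b D$, while the $\partial_+ D$ contributions are continuous in $\tau$). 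A secondary technical point is the appearance of the auxiliary term $\tfrac12\int_{\partial_+D} t^{1-2s}|\nabla U|^2(z-z_j)\cdot\nu$, which must be grouped with the $\langle(z-x_j)\cdot\nabla U\rangle\nabla U\cdot\nu$ term to form the symmetric stress-tensor expression displayed in \eqref{eq-local}; this is purely algebraic once the two multiplier identities are in hand.
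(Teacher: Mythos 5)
Your proposal is correct and follows essentially the same route as the paper: the paper packages your first multiplier computation as the pointwise divergence identity \eqref{eq-poho-root} (the Rellich--Pohozaev stress form), integrates it over $D$, converts the bottom-boundary term via $\partial_\nu^s U=f(U)$ and a tangential integration by parts into $N\int F(U)$ plus the codimension-two term, and eliminates the interior Dirichlet energy exactly through your companion identity obtained by testing with $U$. The only difference is presentational (you derive the divergence identity by two integrations by parts and add a truncation-at-$\{t=\tau\}$ justification that the paper leaves implicit), so no further comparison is needed.
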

\begin{proof}
We have the identity
\begin{equation}\label{eq-poho-root}\textrm{div} \biggl\{ t^{1-2s} (z,\nabla U ) \nabla U - t^{1-2s} \frac{|\nabla U|^2}{2} z \biggr\} + \biggl( \frac{N-2s}{2} \biggr) t^{1-2s}|\nabla U |^2=0.
\end{equation}
Integrating this over the domain $D$, we get
\begin{multline}\label{dxdy0}
 \int_{ \partial_{+} D} t^{1-2s}\left<(z, \nabla U) \nabla U - z \frac{|\nabla U|^2}{2}, \nu\right> dS
+ C_s \int_{\partial_{b} D}  (x, \nabla_x U) \partial_{\nu}^s U  dx
\\
\quad = - \left( \frac{N-2s}{2}\right) \int_{D} t^{1-2s}|\nabla U|^2 dx dt.
\end{multline}
By using $\partial_{\nu}^s U = f(U)$ and performing integration by parts, we deduce that
\begin{align*}
\int_{\partial_{b} D} (x, \nabla_x U) \partial_{\nu}^s U dx
&=\int_{\partial_{b} D} (x, \nabla_x U) f (U) dx
\\
&=\int_{\partial_{b} D} x \cdot \nabla_x F(U) dx
\\
&= -N \int_{\partial_{b} D} F(U) dx +\int_{\partial \partial_{b} D} (x,\nu) F(U) dS_x
\end{align*}
and
\[\int_{D_r} t^{1-2s} |\nabla U|^2 dx dt= C_s\int_{ \partial_{b} D} U f(U) dx + \int_{\partial_+ D} t^{1-2s} U \frac{\partial U}{\partial \nu} dS.\]
Then \eqref{dxdy0} gives the desired identity.
\end{proof}

\medskip
\noindent \textbf{Acknowledgments}

\medskip
The first author thanks his advisor Prof. Rapha\"el Ponge for his support and encouragement. This work is a part of the thesis of the first author. He was supported by the Global Ph.D Fellowship of the Government of South Korea 300-20130026.
Both authors thank Prof. Ki-Ahm Lee for his helpful discussions on this subject.

\end{document}